\theoremstyle{plain}
\newtheorem{thm}{Theorem}
\newtheorem{cor}{Corollary}
\newtheorem{lemma}{Lemma}
\newtheorem{lem}[lemma]{Lemma}
\newtheorem{prop}{Proposition}
\theoremstyle{definition}
\newtheorem{defn}{Definition}
\newtheorem{question}{Question}
\theoremstyle{remark}
\newcommand{\BC}{{\mathbb{C}}}
\newcommand{\BD}{{\mathbb{D}}}
\newcommand{\BH}{{\mathbb{H}}}
\newcommand{\BQ}{{\mathbb{Q}}}
\newcommand{\BR}{{\mathbb{R}}}
\newcommand{\BT}{{\mathbb{T}}}
\newcommand{\BZ}{{\mathbb{Z}}}
\newcommand{\CA}{{\mathcal A}}
\newcommand{\CB}{{\mathcal B}}
\newcommand{\CC}{{\mathcal C}}
\newcommand{\CE}{{\mathcal E}}
\newcommand{\CF}{{\mathcal F}}
\newcommand{\CG}{{\mathcal G}}
\newcommand{\CI}{{\mathcal I}}
\newcommand{\CK}{{\mathcal K}}
\newcommand{\CL}{{\mathcal L}}
\newcommand{\CM}{{\mathcal M}}
\newcommand{\CO}{{\mathcal O}}
\newcommand{\CP}{{\mathcal P}}
\newcommand{\CR}{{\mathcal R}}
\newcommand{\CV}{{\mathcal V}}
\newcommand{\Fm}{{\mathfrak{m}}}
\newcommand{\blangle}{\big\langle}
\newcommand{\brangle}{\big\rangle}
\newcommand{\ch}{{\mathrm{ch}}}
\DeclareFontFamily{OT1}{rsfs}{}
\DeclareFontShape{OT1}{rsfs}{n}{it}{<-> rsfs10}{}
\DeclareMathAlphabet{\curly}{OT1}{rsfs}{n}{it}
\renewcommand\hom{\curly H\!om}
\newcommand\Ext{\operatorname{Ext}}
\newcommand\Hom{\operatorname{Hom}}
\newcommand{\p}{\mathbb{P}}
\newcommand\Spec{\operatorname{Spec}}
\newcommand*\dd{\mathop{}\!\mathrm{d}}
\newcommand{\Mbar}{{\overline M}}
\newcommand{\Coh}{\mathrm{Coh}}
\newcommand{\Pic}{\mathop{\rm Pic}\nolimits}
\newcommand{\PT}{\mathsf{PT}}
\newcommand{\GW}{\mathsf{GW}}
\tikzset{snake it/.style={decorate, decoration=snake}}
\begin{document}
\title[Curve counting on elliptic Calabi--Yau threefolds]{Curve counting on elliptic Calabi--Yau threefolds
via derived categories}
\date{\today}

\author{Georg Oberdieck}
\address{MIT, Department of Mathematics}
\email{georgo@mit.edu}

\author{Junliang Shen}
\address{ETH Z\"urich, Department of Mathematics}
\email{junliang.shen@math.ethz.ch}

\begin{abstract}
We prove the elliptic transformation law
of Jacobi forms for the generating series
of Pandharipande--Thomas invariants of an
elliptic Calabi--Yau 3-fold
over a reduced class in the base.
This proves part of a conjecture by
Huang, Katz, and Klemm.
For the proof we construct an involution
of the derived category and use wall-crossing methods.
We express the generating series
of PT invariants
in terms of low genus Gromov--Witten invariants
and universal Jacobi forms.

As applications we prove new formulas and
recover several known formulas for
the PT invariants of
$\mathrm{K3} \times E$, abelian 3-folds, and the STU-model.
We prove that the generating series of curve counting
invariants for $\mathrm{K3} \times E$
with respect to a primitive class on the $\mathrm{K3}$ is a quasi-Jacobi form of weight -10.
This provides
strong evidence for the Igusa cusp form conjecture.
\end{abstract}

\baselineskip=14.5pt
\maketitle

\setcounter{tocdepth}{1} 

\tableofcontents
\setcounter{section}{-1}

\section{Introduction}
\subsection{Overview}
Recently, considerations in topological string theory
led Huang, Katz, and Klemm \cite{HKK}
to conjecture
a deep connection between
curve counting invariants of elliptic Calabi--Yau 3-folds
and the theory of Jacobi forms \cite{EZ}.
Strong modular structure results for all genus are predicted.

On the other hand,
derived categories of coherent sheaves
play a crucial role in counting curves
by the work of Pandharipande and Thomas \cite{PT1}.
Symmetries in derived categories are expected
to affect curve counting invariants.
More precisely,
Toda asks in \cite{TP2} the following question.

\begin{question}
How are curve counting invariants on a Calabi--Yau 3-fold constrained due to the presence of non-trivial derived auto-equivalences?
\end{question}

The purpose of this paper is to study Question 1
for elliptic Calabi--Yau 3-folds and
explore its connection to the
modular constraints
of the Huang--Katz--Klemm conjecture.
We prove that a specific auto-equivalence induces
the elliptic transformation law of Jacobi forms
on generating series.
By further combining techniques from
Jacobi forms, Pandharipande--Thomas theory, and Gromov--Witten theory
this yields new practical computation methods for curve counting
invariants. In particular we obtain strong evidence
for the Igusa cusp form conjecture \cite{K3xE}.

\subsection{Elliptic fibrations}
Let $X$ be a non-singular projective threefold satisfying
\[
\omega_X \simeq \CO_X \quad \text{and} \quad
H^1(X, \CO_X) = 0 \,.
\]
Let $S$ be a non-singular projective surface
and assume $X$ admits an elliptic fibration over $S$ --- a flat and proper morphism
\[
\pi : X \rightarrow S
\]
with fibers reduced and irreducible curves of arithmetic genus $1$.
The fibers of $\pi$ are therefore one of the following types:
\begin{enumerate}
    \item[(i)] a non-singular elliptic curve,
    \item[(ii)] a nodal rational curve (a cubic in $\p^2$ with a nodal singularity),
    \item[(iii)] a cuspidal rational curve (a cubic in $\p^2$ with a cuspidal singularity).
\end{enumerate}
We further assume the fibration $\pi$ has a section
\[ \iota: S \to X, \quad \pi \circ \iota = \mathrm{id}_S \,.
\]

\subsection{Stable pairs}
A stable pair $(\mathcal{F},s)$ on $X$ is a coherent sheaf $\mathcal{F}$
supported in dimension $1$ and a section
$s \in H^0(X, \mathcal{F})$ satisfying the following stability
conditions:
\begin{enumerate}
\item[(i)] the sheaf $\mathcal{F}$ is pure
\item[(ii)] the cokernel of $s$ is $0$-dimensional.
\end{enumerate}
To a stable pair we associate the Euler characteristic and
the class of the support $C$ of $\mathcal{F}$,
\[
\chi(\mathcal{F})=n\in \mathbb{Z}
\  \ \ \text{and} \ \ \ [C]= \beta \in H_2(X,\mathbb{Z})\,.
\]
Let $P_n(X, \beta)$ be the moduli space
of stable pairs of given numerical type.
Pandharipande--Thomas invariants \cite{PT1}
are defined by integrating
the Behrend function \cite{B}
\[ \nu : P_n(X, \beta) \to \BZ \]
with respect to the
topological Euler characteristic $e( \cdot )$
over the moduli space:
\[ \mathsf{P}_{n, \beta}
= \int_{ P_n(X, \beta) } \nu \dd{e}
= \sum_{k \in \BZ} k \cdot e\left( \nu^{-1}(k) \right) \,.
\]
The set of invariants $\mathsf{P}_{n, \beta}$
are intricately related
to the number and type of algebraic curves in $X$
\cite{PT3}.

\subsection{Elliptic transformation law}
Let $H \in  \mathrm{Pic}(S)$
be an effective divisor class
of arithmetic genus
\[ h = 1 + \frac{1}{2} (H^2 + K_S\cdot H) \,. \]
Let $F \in H_2(X,\BZ)$
be the class of a fiber of $\pi$,
and consider the curve classes
\[ H + d F := \iota_{\ast}(H) + d F \, \in H_2(X,\BZ), \quad d \geq 0 \,, \]
where we have suppressed the cycle class map that takes
the divisor $H$ to its class in homology.
We define the generating series
of stable pairs invariants,
\[ \mathsf{PT}_H(q,t)
=
\sum_{d = 0}^{\infty} \sum_{n \in \BZ}
\mathsf{P}_{n, H + dF} \, q^n t^d \,.
\]

By a calculation of Toda \cite[Thm 6.9]{T12}
we have a complete evaluation in case $H = 0$,
\begin{equation} \label{Toda_result}
\mathsf{PT}_0(q,t)
=
\prod_{\ell, m \geq 1} (1- (-q)^{\ell} t^{m} )^{-\ell \cdot e(X)}
\cdot \prod_{m \geq 1} (1-t^m)^{-e(S)} \,.
\end{equation}

We consider here the case when $H$ is \emph{reduced},
that is, if in every decomposition $H = \sum_i H_i$
into effective classes, all of the $H_i$ are primitive. 

The following is the main result of the paper.

\begin{thm}\label{Theorem1}
Let $H \in \Pic(S)$ be reduced of arithmetic genus $h$.
Then the following equality of generating series holds:
\[
\frac{ \PT_H(q^{-1} t , t) }{ \PT_0(q^{-1}t,t) }
=
q^{2(h-1)} t^{-(h-1)}
\frac{ \PT_H(q , t) }{ \PT_0(q,t)} \,.
\]
\end{thm}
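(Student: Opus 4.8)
The plan is to exhibit the elliptic transformation law as the shadow, on Pandharipande--Thomas invariants, of a single derived autoequivalence of $D^b(X)$ attached to the elliptic fibration $\pi$, together with a wall-crossing comparison that accounts for the normalization by $\PT_0$.

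First I would construct the autoequivalence. Using the section $\iota$ to normalize a relative Poincar\'e sheaf on the fiber product $X \times_S X$, let $\Phi$ be the associated relative Fourier--Mukai transform, post-composed with a shift and a twist by a line bundle pulled back from $S$ so that $\Phi$ is an involution of $D^b(X)$. Since $\Phi$ is built fibrewise over $S$, it commutes with $\pi$ in the appropriate sense and therefore preserves the horizontal content of the support, i.e.\ it fixes the reduced class $H$ while acting only on the pair $(d,n)$ recording the fibre degree and the holomorphic Euler characteristic. The key computation is the induced action on the numerical Grothendieck group: matching the substitution $q \mapsto q^{-1}t$ forces the affine involution
\[
(n,d)\ \longmapsto\ (-n,\, n+d),
\]
where the reflection $n \mapsto -n$ comes from the fibrewise inversion built into the Fourier--Mukai transform and the shift $d \mapsto d+n$ comes from its exchange of Euler characteristic and fibre degree. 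A Riemann--Roch computation of $\mathrm{ch}$ of the kernel, in which the genus enters through $h-1 = \tfrac12 H\cdot(H+K_S)$, should produce the automorphy prefactor $q^{2(h-1)} t^{-(h-1)}$.

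Next I would pass from $\Phi$ to invariants by wall-crossing. The invariants $\mathsf{P}_{n,\beta}$ are computed by integrating the Behrend function over $P_n(X,\beta)$ and, through Toda's motivic Hall algebra formalism, are controlled by a choice of heart up to universal wall-crossing. The autoequivalence $\Phi$ does not preserve stable-pair stability; rather it carries the PT heart to a tilt and PT-stable objects to objects stable for the $\Phi$-transformed stability condition. I would then run the Hall-algebra wall-crossing between the PT chamber and its $\Phi$-image. The transformed counting series is, on one hand, literally $\PT_H(q^{-1}t,t)$ up to the prefactor above, and on the other hand it differs from $\PT_H(q,t)$ exactly by the universal factor produced in crossing the intervening walls. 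That factor is supported on the classes with vanishing horizontal part, and is therefore identified with Toda's evaluation $\PT_0$ of \eqref{Toda_result}; dividing by $\PT_0$ on both sides yields the stated identity.

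The hypothesis that $H$ is reduced is what makes this wall-crossing tractable, and controlling it is the main obstacle. Reducedness guarantees that in any object with support class $\iota_\ast H + dF$ the horizontal part occurs with multiplicity one, so that every subobject or quotient arising in a destabilizing sequence is supported on a pure fibre class $d'F$; this confines the wall-crossing to the rank-one fibrewise situation governed precisely by $\PT_0$ and rules out horizontal splittings that would otherwise obstruct the comparison. The delicate points are therefore moduli-theoretic: showing that $\Phi$ lands PT-stable complexes in a heart where they are Joyce--Song stable, verifying boundedness of the families entering the Hall-algebra integral, and checking that the Behrend function multiplies across the relevant exact triangles so that the motivic identities descend to the numerical identity claimed.
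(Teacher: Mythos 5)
Your overall strategy is the one the paper uses: an involutive relative Fourier--Mukai transform built from the normalized Poincar\'e sheaf on $X\times_S X$ (composed with a line-bundle twist and the derived dual), followed by a motivic Hall algebra wall-crossing whose correction factor is supported on fiber classes and combines with the normalization by $\PT_0$. One numerical correction: the induced action on invariants is the affine involution $(n,d)\mapsto(-n-2h+2,\,d+n+h-1)$, not $(n,d)\mapsto(-n,n+d)$ (which is the $h=1$ case); the genus-dependent affine shift is precisely what produces the prefactor $q^{2(h-1)}t^{-(h-1)}$, so your remark about Riemann--Roch of the kernel points at the right repair.

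The genuine gap is where you locate the reducedness hypothesis. The wall-crossing step is valid for \emph{every} effective $H$: the intervening walls involve only semistable sheaves supported on fibers, in classes $dF$, whether or not $H$ is reduced, and their contribution is computed from Toda's invariants $N_{n,dF}$; no multiplicity-one condition on the horizontal part is needed there. Reducedness is needed in the derived-category step, which your proposal treats as essentially formal. One must prove that the transform of a stable pair is again a two-term complex whose $h^0$ is torsion free and whose $h^1$ lies in the correct subcategory of fiber sheaves (the paper's notion of a $\pi$-stable pair, where the cokernel of the section is allowed to be a suitable fiber-supported sheaf rather than $0$-dimensional). The torsion-freeness of $h^0$ of the transform is exactly what fails for non-reduced $H$: the paper exhibits a stable pair on an elliptic $\mathrm{K3}\times E$ in class $B+2F$ whose transform has torsion in $h^0$. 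Proving it for reduced $H$ requires the geometric input that the underlying curve maps generically one-to-one onto a divisor $D\in|H|$, so that its ideal sheaf is an extension of the ideal sheaf of the curve inside the elliptic surface $\pi^{-1}(D)$ by $\CO_X(-\pi^{\ast}H)$, and the transform of the former is concentrated in a single cohomological degree. Without this analysis the identification of moduli spaces, and hence of Behrend-weighted invariants, does not go through.
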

\vspace{10pt}

Let $\mathsf{Z}_H(q,t) = \mathsf{PT}_H(q , t) / \PT_0(q,t)$.
Then Theorem \ref{Theorem1} can be rewritten as
\[ \mathsf{Z}_H(q^{-1} t, t)
= q^{2(h-1)} t^{-(h-1)} \mathsf{Z}_H(q,t) \,.
\]
By \cite{Br1, T16} every series
$\sum_{n \in \BZ} \mathsf{P}_{n,H + dF} q^n$
is the Laurent expansion of a
rational function in $q$ invariant under the
variable change $q \mapsto q^{-1}$.
Considering $\mathsf{Z}_H(q,t)$ as an element in $\BQ(q)[[t]]$,
we therefore also have
\[ \mathsf{Z}_H(q^{-1},t) = \mathsf{Z}_H(q,t) \,. \]
Combining with Theorem \ref{Theorem1} we obtain
the following relationship of $\mathsf{Z}_H$
to the theory of Jacobi forms \cite{EZ}.
\begin{cor} \label{Corollary1} Let $H \in \Pic(S)$ be reduced
of arithmetic genus $h$.
Then $\mathsf{Z}_H(q,t) \in \BQ(q)[[t]]$
satisfies the elliptic transformation law
for Jacobi forms of index $h-1$, that is, for all $\lambda \in \BZ$
\[
\mathsf{Z}_H(q t^{\lambda},t)
= t^{-(h-1)\lambda^2} q^{-2 (h-1) \lambda} \mathsf{Z}_H(q,t) \,.
\]
\end{cor}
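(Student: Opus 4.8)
The plan is to treat Corollary~\ref{Corollary1} as a purely formal consequence of Theorem~\ref{Theorem1} together with the rationality symmetry $\mathsf{Z}_H(q^{-1},t) = \mathsf{Z}_H(q,t)$ recorded just above. The key observation is that both stated symmetries are \emph{involutive} variable changes in $q$, namely $q \mapsto q^{-1}t$ and $q \mapsto q^{-1}$, and their composition produces the translation $q \mapsto qt$. Since the elliptic transformation law concerns exactly the $\BZ$-family of translations $q \mapsto qt^{\lambda}$, I would generate this family by iterating the composite and keep careful track of how the multiplier factors accumulate.

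Write $m = h-1$ to lighten notation. First I would establish the case $\lambda = 1$. Substituting $q \mapsto q^{-1}$ into the identity of Theorem~\ref{Theorem1} gives
\[
\mathsf{Z}_H(qt,t) = q^{-2m}\, t^{-m}\, \mathsf{Z}_H(q^{-1},t),
\]
and applying the rationality symmetry $\mathsf{Z}_H(q^{-1},t)=\mathsf{Z}_H(q,t)$ on the right-hand side yields
\[
\mathsf{Z}_H(qt,t) = q^{-2m}\, t^{-m}\, \mathsf{Z}_H(q,t),
\]
which is precisely the asserted relation for $\lambda = 1$.

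With the $\lambda = 1$ relation in hand, I would obtain the general case by induction on $\lambda$. For the upward step, replacing $q$ by $qt^{\lambda}$ in the $\lambda=1$ identity and substituting the inductive hypothesis $\mathsf{Z}_H(qt^{\lambda},t) = q^{-2m\lambda} t^{-m\lambda^2}\mathsf{Z}_H(q,t)$ produces
\[
\mathsf{Z}_H(qt^{\lambda+1},t)
= (qt^{\lambda})^{-2m}\, t^{-m}\, \mathsf{Z}_H(qt^{\lambda},t)
= q^{-2m(\lambda+1)}\, t^{-m(\lambda+1)^2}\, \mathsf{Z}_H(q,t),
\]
since $m\lambda^2 + 2m\lambda + m = m(\lambda+1)^2$. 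The downward step $\lambda \mapsto \lambda-1$ follows symmetrically by inverting the $\lambda=1$ relation, and the base case $\lambda = 0$ is trivial. This recovers the claimed multiplier $t^{-m\lambda^2} q^{-2m\lambda}$ for every $\lambda \in \BZ$.

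The argument is essentially formal, so I do not anticipate a serious obstacle; the only point requiring genuine care is to confirm that the substitutions $q \mapsto q^{-1}$ and $q \mapsto qt^{\lambda}$ are legitimate operations on $\mathsf{Z}_H$ as an element of $\BQ(q)[[t]]$ and that they preserve the identities after re-expansion in $t$. All the real mathematical content resides in Theorem~\ref{Theorem1} and in the cited rationality and $q \mapsto q^{-1}$ invariance results of \cite{Br1, T16}; the corollary merely repackages these as the elliptic transformation law of a Jacobi form of index $m = h-1$.
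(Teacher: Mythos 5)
Your proposal is correct and follows essentially the same route as the paper: the authors also derive the corollary by combining Theorem~\ref{Theorem1} with the $q \mapsto q^{-1}$ invariance coming from rationality, the only difference being that you spell out the induction on $\lambda$ explicitly while the paper leaves it implicit.
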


To emphasize, the equality of Corollary \ref{Corollary1}
holds only as an identity of elements in $\BQ(q)[[t]]$.
In contrast, Theorem~\ref{Theorem1} is an equality of
generating series and yields an identity
on the level of coefficients.

By physical considerations and
explicit calculations,
Huang, Katz and Klemm conjecture the series $\mathsf{Z}_H(q,t)$
to be a meromorphic Jacobi form
of index $h-1$ \cite{HKK}.
Jacobi forms must satisfy two different equations:
the elliptic and the modular transformation law.
Corollary~\ref{Corollary1} therefore
proves exactly half of the conjecture
of Huang-Katz-Klemm in case $H$ is reduced\footnote{
The non-reduced case of Theorem~\ref{Theorem1}
will be considered in \cite{OS2}.}.
We will come back to this below.

\subsection{Genus zero}
Let
$\CM_{\beta}$ be the moduli space of
one-dimensional stable sheaves $\CF$
with $\chi(\CF) = 1$ and $\ch_2(\CF) = \beta$.
Following \cite{K} the genus $0$ Gopakumar--Vafa
invariant in class~$\beta$ is the
Behrend function weighted Euler characteristic
\[ \mathsf{n}_{\beta} = \int_{\CM_{\beta}} \nu \dd{e} \,. \]
The invariant $\mathsf{n}_{\beta}$ is a virtual count
of rational curves in class $\beta$.

We define the genus $0$ potential in classes $H + dF$ by
\[ \mathsf{F}_H(t) = \sum_{d \geq 0} \mathsf{n}_{H+dF} t^d \,. \]
Let also
\[ \phi_{-2,1}(q,t)
= (q + 2 + q^{-1}) \prod_{m \geq 1}
\frac{(1+q t^m)^2 (1+q^{-1} t^m)^{2}}{(1-t^m)^4} \, \]
be the (up to scaling) unique
weak Jacobi form of weight $-2$ and index $1$,
see \cite[Thm 9.3]{EZ}\footnote{
The variables $z \in \BC, \tau \in \BH$ of \cite{EZ}
are related to $(q,t)$ by
$q=e^{2 \pi i (z+1/2)}$ and $t = e^{2 \pi i \tau}$.
Also our convention of
$\phi_{-2,1}$ (but not $\phi_{0,1}$ below)
differs from \cite{EZ} by a sign.}.

 \begin{thm} \label{Theorem_genus0} Let $H \in \Pic(S)$ be irreducible of arithmetic genus $h=0$. Then
\[
\frac{\PT_H(q,t)}{\PT_0(q,t)}\ = \ 
\mathsf{F}_H(t) \cdot \frac{1}{\phi_{-2,1}(q,t)}
\]
\end{thm}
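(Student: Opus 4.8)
The plan is to combine the elliptic transformation law already established in Corollary \ref{Corollary1} with a pole-order bound coming from the genus-zero hypothesis $h=0$, thereby reducing the statement to the fact that a weak Jacobi form of index $0$ is independent of the elliptic variable. Write $\mathsf{Z}_H(q,t)=\PT_H(q,t)/\PT_0(q,t)$ and set
\[ g(q,t) := \mathsf{Z}_H(q,t)\cdot \phi_{-2,1}(q,t) . \]
Since $h=0$, Corollary \ref{Corollary1} says $\mathsf{Z}_H$ obeys the elliptic transformation law of index $h-1=-1$, while $\phi_{-2,1}$ is a Jacobi form of index $+1$; hence $g$ obeys the elliptic transformation law of index $0$, i.e.\ $g(qt^{\lambda},t)=g(q,t)$ for all $\lambda\in\BZ$. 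Moreover, by the rationality and $q\mapsto q^{-1}$ symmetry of the stable-pairs series recalled just before Corollary \ref{Corollary1}, together with the manifest symmetry of $\phi_{-2,1}$, we have $g(q^{-1},t)=g(q,t)$. The goal is thus to show $g\in\BQ[[t]]$, i.e.\ that $g$ is independent of $q$, and then to identify it with $\mathsf{F}_H(t)$.

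The key geometric input is a bound on the poles of $\mathsf{Z}_H$ in the variable $q$. I would show that for each $d$ the rational function $\mathsf{Z}_{H,d}(q):=[t^{d}]\,\mathsf{Z}_H(q,t)$ has a pole of order at most $2$ at $q=-1$ and is otherwise regular on $\BC^{\ast}$. The natural route is the Gopakumar--Vafa/BPS description of the stable-pairs series: only the genus-zero contribution produces the factor $(q^{1/2}+q^{-1/2})^{-2}$, which has a double pole at $q=-1$, whereas the genus-$g$ contribution for $g\geq 1$ carries $(q^{1/2}+q^{-1/2})^{2g-2}$ and is regular there (and vanishing for $g\geq 2$). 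This is exactly where the hypotheses that $H$ is irreducible of arithmetic genus $0$ are used: they guarantee that the classes under consideration are $H+dF$ with $H$-component equal to the irreducible class $H$, so that the reduced series $\mathsf{Z}_H$ sees only honest curve classes. Granting the order-$2$ bound at $q=-1$, the index-$(-1)$ elliptic transformation law of Corollary \ref{Corollary1} propagates the singular locus along the orbit $\{\,q=-t^{m}:m\in\BZ\,\}$, at each point of which the pole is again of order at most $2$. These are precisely the zeros of $\phi_{-2,1}$: its prefactor $q+2+q^{-1}=(q^{1/2}+q^{-1/2})^{2}$ has a double zero at $q=-1$, the factors $(1+qt^{m})^{2}(1+q^{-1}t^{m})^{2}$ supply double zeros along $q=-t^{\mp m}$, and the regular product $\prod_{m\geq 1}(1-t^{m})^{4}/((1+qt^{m})^{2}(1+q^{-1}t^{m})^{2})$ equals $1$ at $q=-1$. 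Hence $g=\mathsf{Z}_H\,\phi_{-2,1}$ is holomorphic in $q\in\BC^{\ast}$ for each fixed $t$.

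Finally, $g$ is a weak Jacobi form of index $0$: it is holomorphic in the elliptic variable and invariant under $q\mapsto qt^{\lambda}$, hence doubly periodic, so for fixed $t$ it is a holomorphic elliptic function of $z$ and therefore constant in $q$. This yields $g(q,t)=g(t)\in\BQ[[t]]$ and thus $\mathsf{Z}_H(q,t)=g(t)/\phi_{-2,1}(q,t)$. To identify $g(t)=\mathsf{F}_H(t)$ I would compare $t$-coefficients through their genus-zero content: since the regular product above equals $1$ at $q=-1$, taking the leading Laurent coefficient of $(q+2+q^{-1})\,\mathsf{Z}_{H,d}(q)$ at $q=-1$ extracts $[t^{d}]\,g(t)$ on one side, while the genus-zero part of the GV/PT correspondence (matching the leading double-pole coefficient with Katz's invariant defined from $\CM_{\beta}$) identifies it with $\mathsf{n}_{H+dF}$ on the other; matching these for all $d$ gives $g=\mathsf{F}_H$. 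The main obstacle is the pole-order bound of the second step — proving that, under the genus-zero and irreducibility hypotheses, $\mathsf{Z}_{H,d}(q)$ acquires only the genus-zero double pole at $q=-1$ — since this is where the geometry must enter; once it is in hand, the Jacobi-form formalism of Corollary \ref{Corollary1} and the explicit zeros of $\phi_{-2,1}$ do the rest.
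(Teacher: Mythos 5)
Your overall strategy --- multiply $\mathsf{Z}_H$ by $\phi_{-2,1}$ to get an index-$0$ object, show it is constant in $q$, and identify the constant from the double pole at $q=-1$ --- is genuinely different in organization from the paper, which instead deduces the $h=0$ case from the general structure result (Theorem~\ref{Theorem_highergenus}) and then matches $f_0$ with $\mathsf{F}_H$ by comparing leading Laurent coefficients at $q=-1$. The final identification step in your proposal is essentially the paper's. However, there is a genuine gap at the point you yourself flag as the crux: the pole-order bound. You propose to obtain it from ``the Gopakumar--Vafa/BPS description of the stable-pairs series,'' with genus-$g$ contributions carrying $(q^{1/2}+q^{-1/2})^{2g-2}$ and ``vanishing for $g\geq 2$.'' That description, in the strength you need (each $t^d$-coefficient of $\mathsf{Z}_H$ is $(q^{1/2}+q^{-1/2})^{-2}$ times a constant plus a Laurent \emph{polynomial} in $(q^{1/2}+q^{-1/2})^{2}$), is precisely the BPS integrality/finiteness structure, which is not available a priori and certainly does not follow from $h=0$ alone: classes $H+dF$ support curves of arbitrarily high genus, so there is no a priori vanishing of higher-genus contributions. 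The rigorous source of the bound is the Bridgeland--Toda wall-crossing identity \eqref{TE} together with Toda's evaluation \eqref{eq502}, which says that for a nonzero reduced class the exponential factor contributes exactly $(q^{1/2}+q^{-1/2})^{-2}\sum_d \mathsf{N}_{1,H+dF}t^d$, while the $L$-part contributes a symmetric Laurent polynomial in $q$. Without invoking this (or an equivalent), your second step does not close; with it, your argument becomes a repackaging of the paper's.

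A secondary issue: your step ``$g$ is holomorphic on $\BC^{\ast}$ and doubly periodic, hence constant'' treats $\mathsf{Z}_H\in\BQ(q)[[t]]$ as an analytic function of $(q,t)$, but no convergence in $t$ is known, so one cannot fix $t$ and argue on the torus $\BC^{\ast}/t^{\BZ}$. This is repairable: once the pole bound gives $g\in\BQ[q,q^{-1}][[t]]$, the index-$0$ transformation law reads $c_{n,d}=c_{n,d+\lambda n}$ on coefficients, and taking $\lambda n\to-\infty$ forces $c_{n,d}=0$ for $n\neq 0$ since there are no negative powers of $t$. This coefficient recursion is exactly the mechanism the paper uses (equations \eqref{Masfsaf}--\eqref{Masfsd2}) in its proof of Theorem~\ref{Theorem_highergenus}, so you should phrase the periodicity argument formally rather than analytically. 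Finally, in the identification $g(t)=\mathsf{F}_H(t)$, be aware that what is actually used is $\mathsf{n}_{\beta}=\mathsf{N}_{1,\beta}$ together with \eqref{eq502}, not the (conjectural) GW/PT correspondence; as written your appeal to ``the genus-zero part of the GV/PT correspondence'' imports an unnecessary conjecture.
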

\vspace{10pt}
If $H$ is irreducible of genus $0$ then every curve
in $X$ of class $H+dF$
consists of a section over a line
in the base together with vertical components.
Theorem~\ref{Theorem_genus0} then says
that the series $\PT_H$
is a genus~0 term (counting sections)
times a universal contribution coming from the fiber geometry.

\subsection{General case}
We have the following more general structure result.

Consider the Weierstra{\ss} elliptic function
\[ \wp(q,t) = - \frac{1}{12} + \frac{q}{(1+q)^2}
- \sum_{d \geq 1}\sum_{m|d} m \big((-q)^m - 2 + (-q)^{-m}\big) t^d \]
and let
\[
\phi_{0,1}(q,t)
= 12 \wp(q,t) \cdot \phi_{-2,1}(q,t)
\]
be the unique weak Jacobi form of weight $0$ and index $1$.

\begin{thm} \label{Theorem_highergenus}
Let $H \in \Pic(S)$ be a reduced class of arithmetic genus $h \geq 0$,
and let $n$ be the largest integer for which there
exist a decomposition $H = \sum_{i=1}^{n} H_i$
into effective classes.
Then there exist power series
\[ f_{-(n-1)}(t), \ldots, f_{h}(t) \in \BQ[[t]] \]
such that
\[
\frac{\PT_H(q,t)}{\PT_0(q,t)}
\ = \  
\sum_{i=-(n-1)}^{h} f_i(t) \cdot \phi_{-2,1}(q,t)^{i-1} \phi_{0,1}(q,t)^{h-i} \,.
\]
\end{thm}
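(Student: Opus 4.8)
The plan is to use Corollary~\ref{Corollary1} to place $\mathsf{Z}_H = \PT_H/\PT_0$ inside the space of meromorphic index-$(h-1)$ Jacobi forms, and then to pin down $\mathsf{Z}_H$ by controlling its poles in the elliptic variable. First I would record the two structural facts already available: by Corollary~\ref{Corollary1} the series $\mathsf{Z}_H$ satisfies the elliptic transformation law of index $h-1$, and by rationality it is invariant under $q \mapsto q^{-1}$, hence even in the elliptic variable $z$ (where $q = e^{2\pi i(z+1/2)}$). The key auxiliary function is $\phi_{-2,1}$: from its product expansion one checks that all of its zeros in $q$ occur at $q = -1, -t^{\pm m}$, i.e.\ exactly at the origin of the elliptic curve $\BC/(\BZ + \BZ\tau)$, where it vanishes to order two. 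Dually, $\wp = \phi_{0,1}/(12\,\phi_{-2,1})$ has a double pole at the origin and is holomorphic elsewhere.

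The algebraic core of the argument is then a reduction to the classical theory of elliptic functions. Set $G := \mathsf{Z}_H \cdot \phi_{-2,1}^{-(h-1)}$. Since $\phi_{-2,1}$ has index $1$, the quotient $G$ has index $0$, so it is doubly periodic---an elliptic function of $z$ with coefficients in $\BQ[[t]]$; it is moreover even, because both $\mathsf{Z}_H$ and $\phi_{-2,1}$ are. If one knows that the only poles of $\mathsf{Z}_H$ lie at the origin and have order at most $2n$, then, since $\phi_{-2,1}^{-(h-1)}$ contributes a pole of order $2(h-1)$ there and is regular and nonvanishing away from the origin, $G$ is an even elliptic function whose only singularity is a pole of order at most $2(n+h-1)$ at the origin. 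By the standard structure theorem for elliptic functions, such a $G$ is a polynomial in $\wp$ of degree at most $n+h-1$,
\[ G = \sum_{j=0}^{n+h-1} c_j(t)\,\wp^{\,j}, \qquad c_j \in \BQ[[t]], \]
the coefficients $c_j$ being extracted from the even Laurent expansion of $G$ at the origin by $\BQ$-linear operations. Substituting $\wp = \phi_{0,1}/(12\,\phi_{-2,1})$, multiplying back by $\phi_{-2,1}^{h-1}$, and reindexing by $i = h-j$ gives
\[ \mathsf{Z}_H = \sum_{i=-(n-1)}^{h} \frac{c_{h-i}(t)}{12^{\,h-i}}\; \phi_{-2,1}^{\,i-1}\,\phi_{0,1}^{\,h-i}, \]
which is exactly the asserted formula with $f_i(t) = 12^{-(h-i)} c_{h-i}(t)$.

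Everything therefore reduces to the two statements about the poles of $\mathsf{Z}_H$. For the \emph{location} of the poles I would argue that each $t^d$-coefficient of $\mathsf{Z}_H$, a rational function of $q$ by \cite{Br1,T16}, has poles only at $q = -1$. The rational functions $\PT_H$ and $\PT_0$ individually carry ``vertical'' poles coming from the fiber directions, but these are common to numerator and denominator---this is precisely the role of dividing by $\PT_0$, whose closed form \eqref{Toda_result} captures the pure-fiber contributions---and cancel in the quotient, leaving only the ``horizontal'' poles at the origin associated to the section class. For the \emph{order} bound I must show the pole at the origin has order at most $2n$, \textbf{uniformly in $d$}. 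This uniform bound in terms of the number of components $n$, rather than the a priori larger arithmetic genus $h$, is the heart of the theorem: the genus $h$ only enlarges the regular part of the expansion (the top term $\phi_{-2,1}^{h-1}$), whereas the singular behaviour at the origin is controlled entirely by how the support can split.

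The hardest step is this uniform order bound. I expect it to follow from the geometry underlying the wall-crossing already used for Theorem~\ref{Theorem1}: a stable pair in class $H + dF$ projects to a curve in $H = \sum_{i=1}^n H_i$, and the order of the pole at the origin measures the extent to which the pair can degenerate along the nodal and cuspidal fibers; since the section part splits into at most $n$ effective pieces, each contributing a bounded, order-two genus-zero type singularity, the total pole order should be at most $2n$ independently of the number $d$ of fiber components. Concretely I would try to establish this by induction on $n$, taking the irreducible genus-zero computation of Theorem~\ref{Theorem_genus0}---where $n=1$ and the pole order is exactly $2$---as the base case, and using a gluing/wall-crossing argument to bound the increase in pole order by two for each additional component; alternatively one can bound the order of the pole at $q=-1$ of the normalized series directly through the vanishing of Gopakumar--Vafa invariants above the relevant genus.
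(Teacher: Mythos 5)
There is a genuine gap, and it sits exactly where you flag it: the claim that every $t^d$-coefficient of $\mathsf{Z}_H=\PT_H/\PT_0$ has poles only at $q=-1$, of order at most $2n$ \emph{uniformly in $d$}, is never proved. Your suggested routes (a geometric induction on the number of components with a ``gluing/wall-crossing'' step, or vanishing of Gopakumar--Vafa invariants above some genus) are heuristics, not arguments; nothing in Theorem~\ref{Theorem1} or Theorem~\ref{Theorem_genus0} gives you the order-two-per-component bound for a decomposable class. The paper obtains precisely this input from the Bridgeland--Toda wall-crossing identity \eqref{TE}: extracting the $t^H$-terms gives \eqref{452452}, a sum over decompositions $H=H_1+\cdots+H_k$ of products of the series $f_{H_i}$ and $L_{H_i}/L_0$, and the crucial point is Toda's evaluation \eqref{eq502}, valid because each $H_i$ is nonzero and reduced, which says $f_{H_i}=(q^{1/2}+q^{-1/2})^{-2}\sum_d \mathsf{N}_{1,H_i+dF}t^d$. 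Since $k\leq n$ and the $L$-series are Laurent polynomials in $q$, every $t^d$-coefficient of $\mathsf{Z}_H$ is a Laurent polynomial in $(q^{1/2}+q^{-1/2})^2$ with lowest exponent $\geq -n$; this is simultaneously the pole-location and the pole-order statement you need, and there is no way around citing (or reproving) this sheaf-counting input.

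A second, more technical problem is your appeal to the structure theorem for elliptic functions. Corollary~\ref{Corollary1} is only an identity in $\BQ(q)[[t]]$; for fixed $\tau$ the series in $t$ is not known to converge, so $G=\mathsf{Z}_H\,\phi_{-2,1}^{-(h-1)}$ is not known to be an actual meromorphic function of $z$, and ``an even elliptic function with one pole is a polynomial in $\wp$'' cannot be invoked literally. This can be repaired, and the paper's proof is essentially the formal substitute: having matched the polar parts order by order (an induction using that the $t^d$-coefficient of $\phi_{-2,1}^{i-1}\phi_{0,1}^{h-i}$ is a Laurent polynomial in $(q^{1/2}+q^{-1/2})^2$ starting at exponent $i-1$), one shows the remainder $\CR$ satisfies the elliptic transformation law \emph{as an identity of generating series} (here Theorem~\ref{Theorem1} itself is used, not just the weaker Corollary~\ref{Corollary1}), translates this into the coefficient identity $c_{n,d}=c_{n+2(h-1)\lambda,\,d+\lambda n+(h-1)\lambda^2}$, and runs a descent on the minimal $d$ with $\CR_d\neq 0$ to force $\CR=0$. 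Your plan has the right skeleton and the right degree bookkeeping, but both load-bearing steps --- the uniform polar bound and the rigidity argument replacing elliptic-function theory --- are missing.
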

\vspace{4pt}

By the conjecture of Huang, Katz and Klemm \cite{HKK}, we expect
$\PT_H/\PT_0$ to be a meromorphic Jacobi form
of index $h-1$ and some weight~$\ell$.
This is equivalent to saying that every
\[ f_i(t) \in \BQ[[t]] \]
in Theorem~\ref{Theorem_highergenus}
is a weak modular form of weight $\ell+2i-2$.

Similar to before
we may think of
the functions $f_i(t)$
as counting generalized genus~$i$ sections
over curves in the base\footnote{
The genus can be negative here if the curve is disconnected.}.
Concretely,
by \cite{HKK} and
the examples of Section~\ref{Section_Examples}
for every $i$ there should be
a natural splitting
\[ f_i(t) =
\prod_{m \geq 1} \frac{1}{(1-t^m)^{-12 ( K_S \cdot H )}} \cdot g_i(t) \,. \]
By calculations of Bryan and Leung \cite{BL}
the factor
\begin{equation}
\prod_{m \geq 1} \frac{1}{(1-t^m)^{-12 (K_S \cdot H)}}
\label{first_factor} \end{equation}
is the contribution to $f_i(t)$
of a fixed curve
with no vertical components.
The remaining factor
\[ g_i(t) \in \BQ[[t]] \]
is expected
to be a modular form
related to the jumping of the Picard rank in the fibers
of the family
\begin{equation*} \pi^{-1}(L), \ \ L \in |H| \,. \end{equation*}
We hope that an approach using Noether--Lefschetz theory \cite{GWNL}
can provide a pathway to the modularity of~$g_i(t)$.

\subsection{Gromov--Witten theory}
We now assume that $X$ satisfies the GW/PT correspondence
which relates Pandharipande--Thomas invariants to
Gromov--Witten invariants of $X$, see \cite[Conj 3.3]{PT1}.
By \cite{PaPix1, PaPix2} the correspondence holds when
$X$ is a complete intersection in a product of projective spaces.

The genus $g$ Gromov--Witten invariant of $X$ is defined by the integral
\[ \GW_{g, \beta}
= \int_{[ \Mbar_{g}(X, \beta) ]^{\text{vir}}} 
1
\]
where $\Mbar_{g}(X, \beta)$ is the
moduli space of genus $g$ stable maps to $X$ with connected domain, and $[ \, \cdot \, ]^{\text{vir}}$ is its virtual class.
Define the generating series of genus $g$ Gromov--Witten invariants
\[ \GW_H^{g}(t) = \sum_{d \geq 0} \GW_{g, H+dF} t^d \,. \]

\begin{prop} \label{Theorem_GW}
Assume the GW/PT correspondence holds for $X$,
and let $H \in \Pic(S)$ be irreducible of arithmetic genus $h$.

Then the series $f_i(t)$ of Theorem~\ref{Theorem_highergenus}
are effectively determined from the series $\GW_H^{i}(t), i=0, \ldots, h$
via the equality
\begin{equation} \label{eqGW}
\sum_{i=0}^{h} f_i(t) \phi_{-2,1}(q,t)^{i-1} \phi_{0,1}(q,t)^{h-i}
\equiv
\sum_{g=0}^{h} 
\mathrm{GW}_H^{g}(t) u^{2g-2} \ \ \mathrm{mod}\ u^{2h}
\end{equation}
under the variable change $q = -e^{iu}$.
\end{prop}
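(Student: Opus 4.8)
The plan is to deduce the identity from the Gromov--Witten/Pandharipande--Thomas correspondence together with the structural result of Theorem~\ref{Theorem_highergenus}, the essential new input being a clean passage between the normalized series $\PT_H/\PT_0$ and the \emph{connected} genus generating series $\sum_g \GW_H^g(t) u^{2g-2}$. First I would assemble the partition functions over all curve classes. Introduce a formal variable $s$ tracking the base class $\pi_\ast\beta$ and $t$ tracking the fiber degree $d(\beta)$, set $\PT_\beta(q) := \sum_n \mathsf{P}_{n,\beta} q^n$, and write $Z_{\PT} = \sum_\beta \PT_\beta(q)\, s^{\pi_\ast\beta} t^{d(\beta)}$. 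Since a stable pair has one-dimensional support there are no contributions from the zero class, so the GW/PT correspondence assumed for $X$ takes the multiplicative form $Z_{\PT} = \exp\big( \sum_{\beta \neq 0} \GW_\beta(u)\, s^{\pi_\ast\beta} t^{d(\beta)} \big)$ under $q = -e^{iu}$, where $\GW_\beta(u) = \sum_g \GW_{g,\beta} u^{2g-2}$ is the connected genus series. This is simply the disconnected $=\exp(\text{connected})$ relation.

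Next I would extract the two relevant graded pieces. Write the exponent as $W_0 + W_H + W'$, where $W_0 = \sum_{d\geq 1}\GW_{dF}(u)t^d$ collects the purely vertical classes ($\pi_\ast\beta = 0$), $W_H = \sum_{d\geq 0}\GW_{H+dF}(u)t^d$ is the $s^{[H]}$ component, and $W'$ collects all remaining base classes. Taking the coefficient of $s^0$ gives $\exp(W_0) = \PT_0(q,t)$. For the coefficient of $s^{[H]}$ the hypothesis that $H$ is irreducible is decisive: as $[H]$ admits no decomposition into two nonzero effective base classes, only the linear term of $\exp(W_H + W')$ can produce $s^{[H]}$, so the $s^{[H]}$-coefficient of $Z_{\PT}$ equals $\exp(W_0)\cdot W_H = \PT_H(q,t)$. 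Dividing yields the exact identity
\[
\frac{\PT_H(q,t)}{\PT_0(q,t)} \ =\ W_H \ =\ \sum_{g\geq 0}\GW_H^g(t)\,u^{2g-2} \qquad (q = -e^{iu}).
\]

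Finally I would combine this with Theorem~\ref{Theorem_highergenus}, which in the irreducible case ($n=1$) writes the left-hand side as $\sum_{i=0}^h f_i(t)\phi_{-2,1}^{i-1}\phi_{0,1}^{h-i}$, and pass to the variable $u$. Under $q = -e^{iu}$ one checks $q+2+q^{-1} = 2-2\cos u = 4\sin^2(u/2)$ and $q/(1+q)^2 = 1/(4\sin^2(u/2))$, so that $\phi_{-2,1}(q,t) = u^2 + O(u^4)$ and $\phi_{0,1}(q,t) = 12 + O(u^2)$, the corrections being power series in $u^2$ with coefficients in $\BQ[[t]]$. Hence $\phi_{-2,1}^{i-1}\phi_{0,1}^{h-i}$ has leading term $12^{h-i}u^{2i-2}$, and the linear map sending $(f_0,\dots,f_h)$ to the coefficients of $(u^{-2},u^0,\dots,u^{2h-2})$ is triangular with nonzero diagonal entries $12^{h-i}$, hence invertible over $\BQ[[t]]$. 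Reducing the two expressions for $\PT_H/\PT_0$ modulo $u^{2h}$ gives precisely the claimed congruence~\eqref{eqGW}, in which only $\GW_H^0,\dots,\GW_H^h$ survive on the right; the triangularity then shows these finitely many genus series determine $f_0,\dots,f_h$ effectively, solving successively from $i=0$ upward. The one genuine subtlety to handle carefully is the first step: justifying the multiplicative GW/PT normalization with the correct treatment of the vertical classes and the absence of zero-class contributions, and recording that irreducibility of $H$ is exactly what forbids cross-terms in the exponential---this is why the statement is restricted to irreducible $H$ rather than general reduced classes.
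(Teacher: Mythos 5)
Your argument is correct and follows essentially the same route as the paper: the paper likewise identifies $\PT_H/\PT_0$ with the connected series $\sum_g \GW_H^g(t)u^{2g-2}$ by taking the $u^H$-coefficient of the logarithm of the full partition function (using irreducibility of $H$), and then invokes the expansion $\phi_{-2,1}^{i-1}\phi_{0,1}^{h-i} = 12^{h-i}u^{2i-2} + O(u^{2i})$ to get the triangular, hence invertible, system. Your write-up merely makes the $\exp/\log$ bookkeeping and the triangularity explicit.
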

\vspace{5pt}

By inverting the system \eqref{eqGW} we find
$\PT_H$ is an universal linear combination
of the first $h+1$ Gromov--Witten series.
In particular the Gromov--Witten invariants
up to genus $h$
determine
the Gromov--Witten invariants of arbitrary genus.
As examples we consider the first few cases.

Under the assumptions of Theorem \ref{Theorem_highergenus}
the genus $0$ Gopakumar--Vafa invariants agree with the
genus $0$ Gromov--Witten invariants:
\[ \mathsf{n}_{H+dF} = \GW_{0,H+dF} \,. \]
Hence in genus $h=0$ we recover Theorem \ref{Theorem_genus0}.
In genus $h = 1$ Theorem \ref{Theorem_highergenus} yields
\begin{equation} \label{gen1_eqn}
\frac{\PT_H(q,t)}{\PT_0(q,t)}
= \GW_H^{0}(t) \cdot \wp(q,t) + \GW_H^{1}(t) \,.
\end{equation}
To state the genus $2$ case, let
$$E_{2k}(t) 
= 1 - \frac{4k}{B_{2k}} \sum_{d \geq 1} \sum_{m | d} m^{2k-1} t^d$$
be the Eisenstein series with $B_{2k}$ the Bernoulli numbers.
Then, for $h=2$,
\begin{equation} \label{gen2_eqn}
\begin{aligned}
\frac{\PT_H}{\PT_0}
= & \phantom{+}
\GW_H^0 \cdot \left(
\wp^2 \phi_{-2,1} + \frac{1}{12} E_2 \wp \phi_{-2,1} +
\Big(\frac{1}{288} E_2^2 - \frac{11}{1440} E_4\Big) \phi_{-2,1}
\right) \\
& + \GW_H^1 \cdot \left( \wp \phi_{-2,1} + \frac{1}{12} E_2 \phi_{-2,1} \right) \\
& + \GW_H^2 \cdot \phi_{-2,1} \,.
\end{aligned}
\end{equation}
where we omitted the dependence on $q,t$.

\subsection{An example: $\mathrm{K3} \times E$}
Let $S$ be a non-singular projective K3 surface,
and let $E$ be an elliptic curve.
Consider the product Calabi--Yau $X = S \times E$
elliptically fibered along the projection
to the first factor,
\[ \pi : X \longrightarrow S \,. \]
Let $0_E \in E$ be the zero
and fix the section
\[ \iota : S \to X,\ s \mapsto (s, 0_E) \,. \]

For a non-zero class $H \in \Pic(S)$ the
group $E$ acts on the moduli space $P_n(X, H+dF)$
by translation with finite stabilizers. 
\emph{Reduced} Pandharipande--Thomas invariants of $X$ are defined
by integrating the Behrend function $\nu$
over the quotient space
\[
\mathsf{P}^{\text{red}}_{n, H+dF} = \int_{ P_n(X, H+dF)/E } \nu \dd{e}
\]
where the Euler characteristic is taken in the orbifold sense.
We define the
generating series
of reduced invariants
\[
\PT^{\mathrm{red}}_{H}(q,t)
= \sum_{d \geq 0} \sum_{n \in \BZ}  
\mathsf{P}^{\text{red}}_{n, H+dF} q^n t^d \,.
\]
If $H$ is primitive,
the series $\PT^{\mathrm{red}}_{H}(q,t)$ depends
by deformation invariance
only on the arithmetic genus~$h$ of $H$ and we write
\[ \PT^{\mathrm{K3} \times E}_{h}(q,t)
= \PT^{\mathrm{red}}_{H}(q,t) \,. \]

The ring $\text{QMod}$ of holomorphic quasi-modular forms
is the free polynomial algebra
in the Eisenstein
series $E_2(t)$, $E_4(t)$ and $E_6(t)$,
\[ \text{QMod} = \BQ[ E_2, E_4, E_6 ] \,. \]
Recall the Jacobi forms $\phi_{-2,1}(q,t)$ and
$\phi_{0,1}(q,t)$. The ring
\[ \widetilde{\mathrm{Jac}} = \text{QMod}[ \phi_{-2,1}, \phi_{0,1} ] \]
carries a natural bigrading by index and weight,
\[
\widetilde{\mathrm{Jac}}
= \bigoplus_{m \geq 0} \bigoplus_{k \geq -2m} \widetilde{\mathrm{Jac}}_{k,m} \,,
\]
where $E_{2k}$ has weight $2k$ and index $0$,
and $\phi_{k,1}$ has weight $k$ and index $1$.

Define the modular descriminant\footnote{
We modify here the usual definition of $\Delta$
by a shift of $t$ to avoid making a similar
shift in the definition of $\PT_H$.
}
\[ \Delta(t) = \prod_{m \geq 1} (1-t^m)^{24} \,. \]

\begin{thm} \label{Theorem_K3xE} We have
\[
\PT^{\mathrm{K3} \times E}_{h}(q,t)
=
\frac{\Psi_h(q,t)}{\Delta(t) \phi_{-2,1}(q,t)}
\]
for a series $\Psi_h(q,t) \in \widetilde{\mathrm{Jac}}_{0,h}$
of index $h$ and weight $0$.
\end{thm}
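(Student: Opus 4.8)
The plan is to identify the numerator $\Psi_h$ explicitly and then verify, piece by piece, that it is a holomorphic quasi-Jacobi form of weight $0$ and index $h$, i.e.\ an element of $\widetilde{\mathrm{Jac}}_{0,h}$, the weight-$0$ index-$h$ part of $\widetilde{\mathrm{Jac}} = \text{QMod}[\phi_{-2,1},\phi_{0,1}]$. First I would pin down the denominator. For $X = S\times E$ one has $e(X) = e(S)\,e(E) = 24\cdot 0 = 0$ and $e(S) = 24$, so Toda's evaluation \eqref{Toda_result} collapses to $\PT_0(q,t) = \prod_{m\ge 1}(1-t^m)^{-24} = \Delta(t)^{-1}$. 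Writing $\mathsf{Z}_H = \PT^{\mathrm{K3}\times E}_h/\PT_0$, the asserted formula is therefore equivalent to $\mathsf{Z}_H = \Psi_h/\phi_{-2,1}$ with $\Psi_h := \mathsf{Z}_H\,\phi_{-2,1} \in \widetilde{\mathrm{Jac}}_{0,h}$, and it is this membership that I would prove.

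Next I would establish the shape of $\Psi_h$ in the variable $q$. Because $H$ is primitive, the largest $n$ with a decomposition $H = \sum_{i=1}^n H_i$ into effective classes is $n = 1$, so the reduced analogue of Theorem \ref{Theorem_highergenus} (obtained by running the derived-category involution and the wall-crossing arguments of the paper in the reduced setting, which is forced here by the translation action of $E$ with finite stabilizers on the moduli of pairs) yields power series $f_0(t),\dots,f_h(t)\in\BQ[[t]]$ with
\[ \mathsf{Z}_H(q,t) = \sum_{i=0}^{h} f_i(t)\,\phi_{-2,1}(q,t)^{\,i-1}\phi_{0,1}(q,t)^{\,h-i}. \]
Multiplying by $\phi_{-2,1}$ gives $\Psi_h = \sum_{i=0}^h f_i(t)\,\phi_{-2,1}^{\,i}\phi_{0,1}^{\,h-i}$, which is manifestly holomorphic in $q$ and of index $h$ (the elliptic transformation law of Corollary \ref{Corollary1}, in its reduced form, is already built into this structure result). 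Each monomial $\phi_{-2,1}^{\,i}\phi_{0,1}^{\,h-i}$ has index $h$ and weight $-2i$, so to land in $\widetilde{\mathrm{Jac}}_{0,h}$ it remains only to show that $f_i(t)$ is a quasi-modular form of weight $2i$.

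This quasi-modularity in $t$ is the genuinely new, and hardest, ingredient, since the derived-category machinery of the paper supplies only the \emph{elliptic} transformation law and says nothing about the \emph{modular} one. To extract it I would pass through Gromov--Witten theory: assuming the reduced GW/PT correspondence for $X = S\times E$, the reduced form of Proposition \ref{Theorem_GW} expresses the $f_i(t)$ as explicit universal combinations of the reduced series $\GW^g_H(t)$, $g = 0,\dots,h$, each of weight $2g$. The $t$-variable records the degree over the fiber $F = E$, and the generating series of reduced invariants of $S\times E$ in the $E$-direction are governed by the Gromov--Witten/Hurwitz theory of the elliptic curve factor; their quasi-modularity, together with the deformation-invariant primitive K3 counts in the base, is precisely the established modularity of the reduced theory of $\mathrm{K3}\times E$ \cite{K3xE}. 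This shows each $\GW^g_H(t)$, and hence each $f_i(t)$, is quasi-modular of the stated weight.

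With the two inputs in hand the theorem follows by bookkeeping: every term $f_i(t)\,\phi_{-2,1}^{\,i}\phi_{0,1}^{\,h-i}$ has index $h$ and weight $2i + (-2i) = 0$, so $\Psi_h\in\widetilde{\mathrm{Jac}}_{0,h}$ and $\PT^{\mathrm{K3}\times E}_h = \Psi_h/(\Delta\,\phi_{-2,1})$. I expect the main obstacles to be exactly the two points flagged above: first, setting up the derived involution and wall-crossing rigorously for the \emph{reduced} invariants of $\mathrm{K3}\times E$, where $H^1(X,\CO_X)\neq 0$ invalidates the hypotheses under which Theorems \ref{Theorem1} and \ref{Theorem_highergenus} were proved; and second, the modular transformation law in $t$, which must be imported from the geometry of the $E$-factor rather than produced by the autoequivalence, and whose interface with Proposition \ref{Theorem_GW} (including the precise weight matching) requires care.
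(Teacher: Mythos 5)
Your proposal follows the paper's proof almost step for step: the identification $\PT_0 = \Delta^{-1}$, the reduction via the product/local-case structure result to showing each $f_i(t)$ is quasi-modular of weight $2i$, and the passage through GW/PT and Proposition~\ref{Theorem_GW} to reduce further to the quasi-modularity of $\GW_H^g(t)$ of weight $2g$ are all exactly what the paper does. Two remarks on where your write-up falls short of a proof. First, a minor one: you ``assume'' the reduced GW/PT correspondence for $\mathrm{K3}\times E$, but for primitive classes this is not an assumption --- the paper invokes \cite[Thm.~1]{O1} to match the Behrend-weighted reduced invariants with the virtual-class invariants and then \cite[Prop.~5]{K3xE} for the correspondence itself.

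Second, and more seriously, the quasi-modularity of $\GW_H^g(t)$ --- which you correctly identify as the genuinely new ingredient that the derived equivalence cannot supply --- is not proved in your argument but only asserted as ``the established modularity of the reduced theory of $\mathrm{K3}\times E$'' with a citation to \cite{K3xE}. That reference states the Igusa cusp form conjecture; it does not contain a theorem that $\GW_H^g(t)$ is quasi-modular of weight $2g$ for all $g$, so as written this step is circular-adjacent. The paper closes this gap with a concrete chain: the degeneration argument of \cite[5.3]{MPT} expresses $\GW_H^g(t)$ through the Gromov--Witten theory of $R\times E$ for a rational elliptic surface $R$; since $R$ is deformation equivalent to a toric surface its Gromov--Witten classes are tautological on $\overline{M}_{g,1}$ \cite{FPrel}; and Behrend's product formula \cite{B2} then reduces everything to the quasi-modularity of the Gromov--Witten theory of the elliptic curve \cite[Section 7]{MPT}, with the weight bookkeeping coming out to $2g$. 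Your intuition that the modularity ``must be imported from the geometry of the $E$-factor'' is exactly right, but the degeneration-plus-product-formula mechanism is the missing content, not a citation one can wave at.
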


Hence in the language of \cite{HilbK3}
we find
$\PT^{\mathrm{K3} \times E}_{h}(q,t)$ is
a quasi-Jacobi form of index $h-1$ and weight $-10$.
In particular
the full series $\PT_h^{\mathrm{K3} \times E}$
is determined from finitely many coefficients.

By basic Gromov--Witten calculations
we recover a result of Bryan \cite{Bryan-K3xE}.
\begin{thm} \label{Theorem_Bryan}
\begin{align*}
\PT^{\mathrm{K3} \times E}_{0}(q,t)
& = \frac{1}{\Delta(t) \phi_{-2,1}(q,t)} \\
\PT^{\mathrm{K3} \times E}_{1}(q,t)
& = \frac{24 \wp(q,t)}{\Delta(t)}
\end{align*}
\end{thm}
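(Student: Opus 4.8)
The plan is to derive both formulas from the structural result of Theorem~\ref{Theorem_K3xE}, which already constrains $\PT^{\mathrm{K3}\times E}_h$ to the shape $\Psi_h/(\Delta\,\phi_{-2,1})$ with $\Psi_h\in\widetilde{\mathrm{Jac}}_{0,h}$. Since the bigraded pieces $\widetilde{\mathrm{Jac}}_{0,h}$ are finite dimensional, each case reduces to determining finitely many rational constants, which I would then pin down by a small number of explicit reduced Gromov--Witten or stable pairs computations at the leading fiber degree.

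First I would analyze $\widetilde{\mathrm{Jac}}_{0,h}$ for $h=0,1$. An element of index $m$ and weight $k$ is a polynomial in $E_2,E_4,E_6,\phi_{-2,1},\phi_{0,1}$ carrying exactly $m$ factors among the $\phi_{k,1}$ (each of index $1$) and total weight $k$. For $h=0$ this forces $\Psi_0$ to be a constant, so $\widetilde{\mathrm{Jac}}_{0,0}=\BQ$. For $h=1$ the only monomials of index $1$ and weight $0$ are $\phi_{0,1}$ and $E_2\,\phi_{-2,1}$, so $\Psi_1=a\,\phi_{0,1}+b\,E_2\,\phi_{-2,1}$ for constants $a,b$. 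Using $\phi_{0,1}=12\wp\,\phi_{-2,1}$ this gives $\PT^{\mathrm{K3}\times E}_1=(12a\,\wp+b\,E_2)/\Delta$, so the target formula $24\wp/\Delta$ is equivalent to $a=2$ and $b=0$.

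For the $h=0$ case I would fix the single constant by a boundary computation. Because $X=S\times E$ is a product with $e(X)=e(S)\cdot e(E)=0$ and $e(S)=24$, Toda's evaluation \eqref{Toda_result} already gives $\PT_0(q,t)=1/\Delta(t)$, which serves as a consistency check, while the leading fiber term of the reduced series is controlled by the reduced genus~$0$ Gopakumar--Vafa invariant of $S$ in a primitive square~$(-2)$ class, which equals $1$ by Yau--Zaslow. Matching the $t^0$ coefficient against $1/(\Delta\,\phi_{-2,1})$ then forces $\Psi_0=1$.

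The $h=1$ case is the crux, and the main obstacle is establishing $b=0$, i.e.\ the absence of the quasimodular $E_2$ term. My plan is to compute the leading fiber coefficient ($d=0$) of $\PT^{\mathrm{K3}\times E}_1$ fully in $q$ and read off both constants: the $q$-dependent part isolates the coefficient of $\wp$ and equals the Yau--Zaslow count of $24$ nodal rational curves in a primitive genus~$1$ linear system on $S$, giving $a=2$, while the $q$-independent part detects $b$. Equivalently, invoking the reduced form of the genus~$1$ relation \eqref{gen1_eqn}, namely $\PT^{\mathrm{K3}\times E}_1=\GW_H^{0}(t)\,\wp+\GW_H^{1}(t)$, I would identify $\GW_H^{0}=24/\Delta$ and show that the reduced genus~$1$ series $\GW_H^{1}$ vanishes. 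The vanishing of $\GW_H^{1}$ (equivalently $b=0$) is the one input that does not follow formally from Theorem~\ref{Theorem_K3xE} and must come from an honest reduced Gromov--Witten calculation on $S\times E$; once it is in hand both formulas follow immediately.
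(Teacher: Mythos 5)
Your overall strategy is the paper's strategy: both arguments reduce the theorem, via the structural results, to determining finitely many low-genus Gromov--Witten series, and your analysis of $\widetilde{\mathrm{Jac}}_{0,0}=\BQ$ and $\widetilde{\mathrm{Jac}}_{0,1}=\BQ\,\phi_{0,1}\oplus\BQ\,E_2\phi_{-2,1}$ is just a repackaging of Theorem~\ref{Theorem_genus0} and equation \eqref{gen1_eqn} (which the paper invokes directly, together with Theorem~\ref{THM_SxE}). You also correctly isolate the crux: the constant $a$ comes from Yau--Zaslow, and everything hinges on showing $b=0$, i.e.\ the vanishing of the reduced genus-one series $\GW^1_H(t)$. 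The problem is that you stop exactly there, declaring that this vanishing ``must come from an honest reduced Gromov--Witten calculation'' without supplying it. That is the only nontrivial input in the whole proof, so as written the argument is incomplete. The paper closes this gap in one line: by Behrend's product formula the genus-one invariant of $S\times E$ in class $H_1+dF$ factors as $24\sigma(d)\cdot\langle\lambda_1\rangle^S_{1,H_1}$, and the $\lambda_1$-bracket on the K3 satisfies $24\,\langle\lambda_1\rangle^S_{1,H_1}=(H_1^2)\,\langle 1\rangle^S_{0,H_1}$, which vanishes because $H_1^2=2h-2=0$ for $h=1$. Your alternative plan --- extracting $b$ from the $q$-independent part of the full $d=0$ coefficient of $\PT^{\mathrm{K3}\times E}_1$ --- would also work in principle (since $E_2$ and $\wp$ have independent constant terms in $t$, the $t^0$ coefficient $-a+b+12a\,q/(1+q)^2$ does determine both constants), but that $d=0$ stable-pairs computation is not easier than the $\lambda_1$ identity and you have not carried it out either.

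Two smaller points. First, \eqref{gen1_eqn} is an identity for $\PT_H/\PT_0$, not for $\PT_H$; since $\PT_0=1/\Delta$ here, the correct identification is $\GW^0_{H_1}(t)=24$ as a constant series, not $24/\Delta$ --- your version is internally consistent with the equation as you restated it, but the notation clashes with the paper's. Second, in both the $h=0$ and $h=1$ cases you should note why the genus-zero series has no positive $t$-powers at all (a connected rational curve cannot dominate the elliptic factor, so $\GW_{0,H+dF}=0$ for $d>0$); the paper uses this silently when writing $\GW^0_{H_0}(t)=\mathsf{GW}_{0,H_0}$, and your ``$t^0$ coefficient'' matching for $h=0$ needs it to conclude that the full series, not just its leading term, equals $1$. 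With the $\lambda_1$ computation (or an equivalent) supplied, your argument becomes a complete proof essentially identical to the paper's.
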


A complete evaluation of the invariants
$\mathsf{P}^{\text{red}}_{n,H+dF}$
was conjectured in \cite{K3xE},
motivated by physical predictions \cite{KKV}
and the calculations \cite{HilbK3}.
For primitive $H$ the conjecture takes the form
\begin{equation}
\sum_{h = 0}^{\infty}
\PT^{\mathrm{K3} \times E}_{h}(q,t) u^h
= \frac{1}{\chi_{10}(q, t, u)} \label{CONJ} \end{equation}
where $\chi_{10}$ is the Igusa cusp form --- a Siegel modular form of weight 10.
Theorem~\ref{Theorem_Bryan} verifies
this conjecture in cases $h=0$ and $h=1$,
and Theorem~\ref{Theorem_K3xE}
gives strong evidence for every genus $h$.

\subsection{Euler characteristics}
The proof of Theorem \ref{Theorem1}
is based on wall-crossing techniques
and applies also for the
(unweighted) Euler characteristic of the moduli spaces.
We state parallel results
for the unweighted case.

Define na{\"i}ve Pandharipande--Thomas invariants
as the Euler characteristic of the moduli space
of stable pairs,
\[ \widetilde{\mathsf{P}}_{n, \beta}
= e\big( P_n(X, \beta) ) \,. \]
We form the generating series
\[
\widetilde{\PT}_{H}(p,t)
=
\sum_{d = 0}^{\infty} \sum_{n \in \BZ}
\widetilde{\mathsf{P}}_{n, H + dF} \, p^n t^d
\]
where we use the variable $p$ instead of $q$.
By the same argument as in Toda's calculation \cite{T12} we have
\[
\widetilde{\PT}_{0}(p,t)
= \prod_{\ell, m \geq 1} (1- p^{\ell} t^{m} )^{-\ell \cdot e(X)}
\cdot \prod_{m \geq 1} (1-t^m)^{-e(S)} \,.
\]
\begin{thm} \label{Theorem_highergenus_EulerChar}
Let $H \in \Pic(S)$ be reduced of arithmetic genus $h$.
Then we have the equality
of generating series
\[
\frac{ \widetilde{\PT}_H(p^{-1} t , t) }{ \widetilde{\PT}_0(p^{-1}t,t) }
=
p^{2(h-1)} t^{-(h-1)}
\frac{ \widetilde{\PT}_H(p , t) }{ \widetilde{\PT}_0(p,t)} \,.
\]
Moreover, let $n$ be the largest integer for which there
exist an effective decomposition $H = \sum_{i=1}^{n} H_i$.
Then there exist $\widetilde{f}_i(t) \in \BQ[[t]]$ such that
\[
\frac{\widetilde{\PT}_H(p,t)}{\widetilde{\PT}_0(p,t)}
\ = \  
\sum_{i=-(n-1)}^{h} \widetilde{f}_i(t) \cdot \phi_{-2,1}(-p,t)^{i-1} \phi_{0,1}(-p,t)^{h-i} \,.
\]
\end{thm}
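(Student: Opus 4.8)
The plan is to establish both displayed equalities as the unweighted shadows of Theorem~\ref{Theorem1} and Theorem~\ref{Theorem_highergenus}, obtained by replacing the Behrend-weighted measure $\nu\,\dd{e}$ everywhere by the bare topological Euler characteristic $\dd{e}$. For the first (elliptic transformation) statement I would rerun the wall-crossing proof of Theorem~\ref{Theorem1} line by line. That argument produces the variable change $p \mapsto p^{-1}t$ from a derived auto-equivalence of $D^b(X)$ together with a wall-crossing across the relevant stability walls; the Behrend function enters only through its invariance under derived equivalences and its compatibility with the wall-crossing. Since the auto-equivalence induces honest isomorphisms (or at least $e$-preserving correspondences) between the moduli of Bridgeland-stable objects that appear, one may simply discard $\nu$: the naive wall-crossing identities, i.e. the unweighted specialization of the motivic formulas, have the same combinatorial shape, and reassembling them yields the first equality with $p$ in the role of $q$.

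For the structure statement I would next establish the inversion symmetry $\widetilde{\PT}_H(p^{-1},t) = \widetilde{\PT}_H(p,t)$ in $\BQ(p)[[t]]$, the unweighted analogue of the rationality and functional equation of \cite{Br1, T16}. This is the unweighted reflection of the derived duality underlying the functional equation: the dualizing involution $\BD = R\hom(-,\CO_X)[1]$ is responsible for the symmetry $n \leftrightarrow -n$ of stable-pair invariants, and running the unweighted Hall-algebra wall-crossing to the naive Donaldson--Thomas series reproduces this invariance once the Behrend weighting is discarded. Combined with the first part, $\widetilde{\PT}_H/\widetilde{\PT}_0$ then satisfies both the index-$(h-1)$ elliptic transformation and the inversion symmetry in $p$.

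With both symmetries available, the second equality follows from the same Jacobi-form algebra used for Theorem~\ref{Theorem_highergenus}: the space of Laurent series in $p$ obeying the index-$(h-1)$ elliptic law and the inversion symmetry is freely generated over $\BQ[[t]]$ by the products $\phi_{-2,1}^{\,i-1}\phi_{0,1}^{\,h-i}$ for $-(n-1)\le i\le h$, so that poles of order up to $n$ along $\phi_{-2,1}$ are permitted. The Jacobi forms appear evaluated at $-p$ rather than $p$ because the unweighted base series is obtained from the weighted one by $q = -p$; indeed $\widetilde{\PT}_0(p,t) = \PT_0(-p,t)$, since $\prod(1-p^\ell t^m)^{-\ell\, e(X)}$ equals $\prod(1-(-q)^\ell t^m)^{-\ell\, e(X)}$ under that substitution. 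A direct check shows $\phi_{-2,1}(-p,t)$ and $\phi_{0,1}(-p,t)$ are invariant under $p \mapsto p^{-1}$ and scale by $t^{-1}p^2$ under $p \mapsto p^{-1}t$, so a product of total index $h-1$ scales by exactly $p^{2(h-1)}t^{-(h-1)}$, matching the first part and pinning down the correct normalization.

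The main obstacle I expect lies in the first two steps, namely in checking that the wall-crossing machinery --- which is naturally motivic and Behrend-weighted --- specializes cleanly to the unweighted Euler characteristic. Behrend-weighted and naive wall-crossing carry a priori different signs coming from the Behrend function, so one must verify that the particular combination entering the elliptic transformation is insensitive to this, i.e. that the auto-equivalence and the dualizing involution act by genuine isomorphisms on the relevant strata rather than by mere motivic identities. The unweighted functional equation of the second step is likewise less standard than its Behrend-weighted counterpart and requires this geometric input as an independent ingredient.
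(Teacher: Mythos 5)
Your proposal matches the paper's proof in all essentials: the paper establishes the first equality by observing that the derived equivalence $\Phi_H$ and the Hall-algebra wall-crossing of Theorem~\ref{Theorem1} apply verbatim to unweighted Euler characteristics, and then obtains the second equality by rerunning the proof of Theorem~\ref{Theorem_highergenus} with the Behrend-weighted structure result of \cite{Br1,T16} replaced by its unweighted counterpart from \cite{T08}, exactly as you outline. One small caveat on phrasing: the elliptic transformation law and the inversion symmetry alone do not ``freely generate'' the span of the $\phi_{-2,1}^{i-1}\phi_{0,1}^{h-i}$ over $\BQ[[t]]$ (e.g.\ $\wp^2$ satisfies both in index $0$ but has too deep a pole at $p=1$); the argument of Theorem~\ref{Theorem_highergenus} additionally needs the structure result's guarantee that each $t^d$-coefficient is a Laurent polynomial in $(p^{1/2}+p^{-1/2})^{2}$ with pole order bounded by $n$, which is the input your appeal to ``the same Jacobi-form algebra'' must carry along.
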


In case of (honest) PT invariants
we expected $\PT_H(q,t)$ to be a Jacobi form
and the functions $f_k(t)$ to be modular forms.
As we will see in Section~\ref{Section_Examples}
in the example of abelian threefolds,
the same does \emph{not} hold for $\widetilde{f}_k(t)$.

Theorem \ref{Theorem_highergenus_EulerChar} implies
that the na{\"i}ve and the honest Pandharipande--Thomas
invariants of $X$ are closely related under the
variable change $q=-p$. The only difference
arises from counting the section terms differently and
the Behrend function does not seem to play any larger role.
Our results are therefore in non-trivial agreement with
previous observations and conjectures
on the Behrend function in elliptic geometries
\cite{Bryan-K3xE, BrKo, BOPY}.

\subsection{Idea of the proof of Theorem \ref{Theorem1}} \label{Section_Intro_ProofThm1}
The identity of Theorem \ref{Theorem1}
arises from two separate steps:
applying a derived equivalence $\Phi_H$
to the moduli space of stable pairs,
followed by a wall-crossing in
the motivic Hall algebra.

The derived equivalence $\Phi_H$
sends a stable pair $\CO_X \to \CF$
to a \emph{$\pi$-stable pair},
which is a modification
of the usual definition of stable pairs
adapted to the elliptic geometry.
If the stable pair has numerical invariants
$(H+dF, n)$ then the transformed $\pi$-stable pair
has invariants
\[
(H + \widetilde{d}F, \widetilde{n}) = \big( H+ (h+d+n-1)F, -n-2h+2 \big) \,.
\]

The definition of $\pi$-stable pairs
is similar in philosophy to the modification
of stable pairs by Bryan-Steinberg \cite{BS} for
a crepant resolution $X \to Y$ that
contracts an exceptional curve.
While for BS-pairs we allow a
pair $\CO_X \to \CF$ to have
$1$-dimensional cokernel along the exceptional curve,
here we allow a $\pi$-stable pair to have $1$-dimensional cokernel 
supported on arbitrary fibers of the fibration.
A wall-crossing argument following Toda \cite{T16}
relates $\pi$-stable pairs invariants
to usual Pandharipande--Thomas invariants with a correction term
involving the count of semistable sheaves supported on fibers of $\pi$.
This yields naturally the term $\PT_0$
in the equation of Theorem \ref{Theorem1}.

The derived auto-equivalence $\Phi_H$ also
arises naturally from the elliptic fibration $\pi$.
Consider the fibered product
\begin{equation*} \label{fibproduct} X \times_S X \end{equation*}
over the base $S$,
and let $\CI_\Delta^{\ast}$ be the dual
of the ideal sheaf of the diagonal in $X \times_S X$.
Up to a normalization
$\CI_{\Delta}^{\ast}$ is the Poincar\'e
sheaf of the elliptic
fibration. Let also
\[ X \xleftarrow{q} X \times_S X \xrightarrow{p} X \]
be the natural projections to the first and second factor.
The Fourier--Mukai transform
$\phi_{\CI_{\Delta}^{\ast}}$
with kernel
$\CI_{\Delta}^{\ast}$ is
\[ \phi_{\CI_{\Delta}^{\ast}}(\CE) =
R q_{\ast}\big( p^{\ast}(\CE) \otimes \CI_{\Delta}^{\ast} \big),
\quad \CE \in D^b \Coh(X) \,. \]

For a line bundle $\CL \in \Pic(X)$ let
\[ \mathbb{T}_\CL (\mathcal{E}) =  \CL\otimes \mathcal{E} \]
be the twist by $\CL$ and let $\mathbb{D} : D^b\mathrm{Coh}(X) \rightarrow D^b\mathrm{Coh}(X)$ be the dual functor,
\[
\mathbb{D}(\mathcal{E}) = R\hom_X( \CE, \CO_X)\,.
\]
The auto-equivalence $\Phi_H$ is then defined as the composition
\[ \Phi_H
= \mathbb{D}
\circ \mathbb{T}_{\pi^\ast \CO_S(H)}
\circ \phi_{\CI_{\Delta}^{\ast}} \,. \]

The strategy of the proof is summarized in the following diagram,
where $P_n^{\pi}(X,\beta)$ denotes the moduli space of $\pi$-stable pairs.
\[
\begin{tikzcd}
P_n(X,H+dF)
\arrow{rrrd}{\text{Auto-equivalence } \Phi_H} 
\arrow[dd, dashed, swap, "\substack{\text{Elliptic}\\\text{transformation}\\
\text{law}}"] \\
&&& P_{\tilde{n}}^{\pi}(X, H + \tilde{d}F)
\ar[snake it]{dlll}{\text{Wall-crossing}}\\
P_{\tilde{n}}(X, H + \tilde{d}F)
\end{tikzcd}
\]

\subsection{Plan of the paper}
In Section~\ref{Section_Elliptic_CY3s} we recall
several basic facts on elliptic fibrations
and study sheaves supported on fibers of $\pi$.
In Section~\ref{Section_pi_stable_pairs}
we introduce $\pi$-stable pairs and prove
the wall-crossing.
In Section~\ref{Section_Derived_equivalence}
we study the derived equivalence. In particular,
if $H$ is reduced,
we prove a complex $I^{\bullet}$ is a stable pair
if and only if $\Phi_H(I^{\bullet})$ is a $\pi$-stable pair.
This completes the proof of Theorem~\ref{Theorem1}.
In Section~\ref{Section_Applications}
we prove Theorems \ref{Theorem_genus0}
and \ref{Theorem_highergenus},
and the corresponding Euler characteristic case.
In Section~\ref{Section_Examples}
we apply our methods to several examples including $\mathrm{K3} \times E$
and abelian threefolds.

\subsection{Conventions} We always work over $\BC$.
For a variety $X$, the canonical bundle (or sheaf)
is denoted $\omega_X$, and the canonical
divisor is $K_X = c_1(\omega_X)$.
The skyscraper sheaf at a point $x \in X$ is $\BC_x$.
The dual of a sheaf $\CF$ is $\CF^{\ast} = \hom_X(\CF, \CO_X)$,
and the derived dual of a complex $\CE \in D^b\mathrm{Coh}(X)$
is $\CE^{\vee} = R\hom_X(\CE, \CO_X)$.
If $i: X \hookrightarrow Y$ is a closed embedding
and $\CF$ is a sheaf on $X$,
then we write $\CF$ also for the pushforward $i_{\ast} \CF$ on $Y$.
For a sheaf $\CF$ and a divisor $D$ on $X$,
we let $\CF(D) = \CF \otimes \CO_X(D)$. 

\subsection{Acknowledgements}
The paper was started
when both authors were attending the workshop
{\em Curves on surfaces and threefolds}
at the Bernoulli center at EPFL Lausanne in June 2016.
Discussions with J.~Bryan
on elliptic geometries and the paper \cite{BS}
were extremely helpful.
We would also like to thank
F.~Greer, S.~Katz, M.~Kool, O.~Leigh,
D.~Maulik, T.~Padurariu, R.~Pandharipande, Y.~Toda, and Q.~Yin
for useful discussions.

J.~ S. was supported by grant ERC-2012-AdG-320368-MCSK in the group of R. Pandharipande at ETH Z\"{u}rich.

\section{Elliptic Calabi--Yau threefolds}
\label{Section_Elliptic_CY3s}
\subsection{Definition}
Let $X$ be a Calabi--Yau 3-fold --- a non-singular projective
threefold with trivial canonical bundle $\omega_X \simeq \CO_X$ and $H^1(X, \CO_X)=0$.
Let $S$ be a non-singular projective surface
and let
\[
\pi : X \rightarrow S
\]
be an elliptic fibration with reduced and irreducible fibers.
Hence fibers 
\[ X_s = \pi^{-1}(s),\  s \in S\]
are either non-singular elliptic curves or
rational curves with a single node or cusp.
Let further
\[ \iota: S \to X, \quad \pi \circ \iota = \mathrm{id}_S \]
be a section of $\pi$. Necessarily,
the section meets every fiber $X_s$ in a non-singular point.
We let
\[ 0_s = \iota(s) \in X_s \]
denote the distinguished point of the fiber $X_s$ over $s \in S$
and
\[ S_0 : = \iota_{\ast}{S} \]
the divisor on $X$ defined by the section.

\subsection{Compactified Jacobian}
Let $\hat{X}$ be the relative compactified Jacobian of $X$
parametrizing torsion free, rank 1 and degree 0 sheaves
on a fiber of $\pi: X \rightarrow S$.
Since $\hat{X}$ is a fine moduli space,
there exists a universal Poincar\'e sheaf on 
\[
X \times_S \hat{X}
\]
uniquely defined up to tensoring by
a line bundle pulled back from $\hat{X}$.
We let $\mathcal{P}$ be the unique Poincar\'e sheaf
satisfying the normalization
\[
\mathcal{P}|_{S_0 \times_S \hat{X}} \simeq \CO_{\hat{X}}.
\]
The sheaves $\CP$ and $\CP^{\ast}$ are flat
over both $X$ and $\hat{X}$, and
$\CP^{\vee} = \CP^{\ast}$ \cite[8.4]{BM}.

Since $\pi$ admits a section and has integral fibers,
we will identify $X$ with its compactified Jacobian $\hat{X}$
via the natural isomorphism
\begin{equation*}
X \xrightarrow{\ \cong\ } \hat{X},\ \ 
x \mapsto {\imath_s}_\ast \big(\Fm_x^{\ast} \otimes \CO_{X_s}(-0_s)\big)
\end{equation*}
where $\imath_s: X_s \hookrightarrow X$
is the inclusion of the fiber over $s = \pi(x)$ and
$\Fm_x$ is the ideal sheaf of $x$ in $X_s$.
Let 
\[ X \xleftarrow{p} X\times_S X \xrightarrow{q} X \]
denote the natural projections. 
The normalized Poincar\'e sheaf is then
\begin{equation*}
 \mathcal{P}
 = \mathcal{I}_\Delta^{\ast} \otimes p^\ast \CO_X(-S_0)
 \otimes q^\ast \CO_X(-S_0) \otimes q^\ast \pi^\ast \omega_S
 \end{equation*}
 where $\CI_\Delta$ is the ideal
 sheaf of the diagonal $\Delta : X \to X \times_{S} X$.

\subsection{Fourier--Mukai transforms}
Let 
\[
\phi_{\CK} : D^b\Coh(X) \to D^b\Coh(X),\ 
\CE \mapsto
Rq_\ast(p^\ast \CE\, \overset{L}{\otimes}\, \CK) \,.
\]
denote the Fourier--Mukai transform with kernel $\CK \in D^b\Coh(X \times_S X)$.

We are mostly interested in
the Fourier--Mukai transform $\phi_{\CP}$ with
kernel the Poincar\'e sheaf $\CP$.
We have the following facts.

\begin{lemma}
\label{Lemma_phiP_autoequivalence}
\begin{enumerate}
\item The transform $\phi_{\CP}$
is an auto-equivalence with inverse
$\phi_{\mathcal{Q}}$ where
$\mathcal{Q} := \mathcal{P}^\vee \otimes p^\ast \pi^\ast \omega_S^{\vee}[1]$.
\item Let $\mathrm{inv} : X \to X$
be the involution which,
under the identification $X \cong \hat{X}$,
sends a torsion free sheaf to its dual. Then
\[
\phi_{\CP} \circ \phi_{\CP}
= \mathrm{inv}^{\ast} \circ \BT_{\pi^{\ast} \omega_S}[-1] \,.
\]
\item Let $\BD$ be the dual functor. Then
\[ \mathbb{D}\circ \phi_\mathcal{P} = \phi_\mathcal{Q} \circ \mathbb{D} = \phi_\mathcal{P}^{-1} \circ \mathbb{D} \,. \]
\end{enumerate}
\end{lemma}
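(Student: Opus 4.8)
The plan is to treat all three statements as consequences of a single geometric input --- that $\phi_{\CP}$ is a relative integral transform along $\pi$, so its essential behaviour may be checked on a single fibre --- together with relative Grothendieck--Serre duality along the projections $p,q$, which will account for the shifts and for the twist by $\pi^{\ast}\omega_S$. Throughout I would use that the fibres of $\pi$ are integral genus-one curves with the section $S_0$, and that $\CP$ is a sheaf, flat over both factors of $X\times_S X$, with $\CP^{\vee}=\CP^{\ast}$.

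For part (1) I would first show $\phi_{\CP}$ is an equivalence. Since the transform is relative over $S$, flat base change reduces this to a fibrewise statement: the restriction of $\CP$ to a fibre is its Poincar\'e sheaf, so on a smooth fibre $\phi_{\CP}$ is the classical Fourier--Mukai equivalence for the elliptic curve, and on a nodal or cuspidal fibre it is the corresponding equivalence for integral genus-one curves (cf.\ \cite{BM}); one then concludes by the fibrewise criterion for relative integral transforms. To identify the inverse I would compute the right adjoint of $\phi_{\CP}$ by Grothendieck duality along $q$: the adjoint kernel is $\CP^{\vee}\otimes\omega_q[1]$, where $\omega_q$ is the relative dualizing sheaf of $q$. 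The Calabi--Yau condition $\omega_X\cong\CO_X$ forces $\omega_q\cong\omega_{X\times_S X}\cong p^{\ast}\pi^{\ast}\omega_S^{\vee}$ (a short computation with the relative canonical of the fibred product), whence $\CP^{\vee}\otimes\omega_q[1]=\CQ$. As $\phi_{\CP}$ is an equivalence its right adjoint is its inverse, giving $\phi_{\CP}^{-1}=\phi_{\CQ}$.

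Part (2) is the substantive computation, and the step I expect to be the main obstacle. I would compute the composite $\phi_{\CP}\circ\phi_{\CP}$ as the convolution of kernels $\CP\star\CP=Rp_{13\ast}(p_{12}^{\ast}\CP\otimes p_{23}^{\ast}\CP)$ on $X\times_S X\times_S X$. Reducing to a fibre by base change, the classical double transform on an elliptic curve is inversion composed with a shift, $(-1)^{\ast}[-1]$, which globalizes to $\mathrm{inv}^{\ast}[-1]$ up to a twist by a line bundle pulled back from $S$. The real content is to pin down that twist as $\pi^{\ast}\omega_S$: this is forced by the normalization $\CP|_{S_0\times_S\hat X}\cong\CO_{\hat X}$ together with the expression of $\CP$ in terms of $\CI_{\Delta}^{\ast}$ and the relative dualizing data, and it must be shown to persist over the nodal and cuspidal fibres. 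Concretely the goal is to identify $\CP\star\CP$ with $\CO_{\Gamma_{\mathrm{inv}}}\otimes q^{\ast}\pi^{\ast}\omega_S[-1]$, the kernel representing $\mathrm{inv}^{\ast}\circ\BT_{\pi^{\ast}\omega_S}[-1]$, where $\Gamma_{\mathrm{inv}}\subset X\times_S X$ is the graph of $\mathrm{inv}$; the careful bookkeeping of the normalizing twist and the relative Serre duality, especially on the singular fibres, is where the difficulty lies. One may also run this in the reverse logical order: once (2) is known, the equivalence asserted in (1) follows formally, since $\mathrm{inv}^{\ast}\circ\BT_{\pi^{\ast}\omega_S}[-1]$ is visibly invertible.

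Part (3) should then be formal. To prove $\BD\circ\phi_{\CP}=\phi_{\CQ}\circ\BD$ I would apply Grothendieck duality for $q$ to $\BD(\phi_{\CP}(\CE))=R\hom_X(Rq_{\ast}(p^{\ast}\CE\otimes\CP),\CO_X)$, using $q^{!}\CO_X=\omega_q[1]$, to obtain $Rq_{\ast}R\hom(p^{\ast}\CE\otimes\CP,\omega_q[1])$. Since $\CP$ is perfect and $p$ is flat this equals $Rq_{\ast}\big(p^{\ast}(\CE^{\vee})\otimes\CP^{\vee}\otimes\omega_q\big)[1]$, and with $\omega_q\cong p^{\ast}\pi^{\ast}\omega_S^{\vee}$ the factor $\CP^{\vee}\otimes\omega_q[1]$ is exactly $\CQ$, so the expression is $\phi_{\CQ}(\BD\CE)$. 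The remaining identity $\phi_{\CQ}\circ\BD=\phi_{\CP}^{-1}\circ\BD$ is then immediate from part (1).
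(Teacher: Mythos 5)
Your parts (1) and (3) are sound and essentially match the paper: for (1) the paper simply cites \cite{BM} and \cite[Prop 2.5]{BBRP2} (your fibrewise reduction plus the adjoint-kernel computation with $\omega_q \cong \omega_{X\times_S X}\cong p^{\ast}\pi^{\ast}\omega_S^{\vee}$ is what those references contain), and for (3) the paper uses exactly your argument, Grothendieck--Verdier duality along $q$ together with $\omega_{X\times_S X}=p^{\ast}\pi^{\ast}\omega_S^{\vee}$.

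The gap is in part (2), and you have correctly located it yourself: you propose to compute the convolution $\CP\star\CP$ on $X\times_S X\times_S X$ and to identify it with the kernel of $\mathrm{inv}^{\ast}\circ\BT_{\pi^{\ast}\omega_S}[-1]$, but you leave precisely that identification --- the normalization twist and its behaviour over the nodal and cuspidal fibres --- as ``the goal'' and ``where the difficulty lies.'' As written this is a plan, not a proof. The idea you are missing is that no kernel convolution is needed: one has the identity $\phi_{\CP^{\ast}}=\mathrm{inv}^{\ast}\circ\phi_{\CP}$, which is immediate from the moduli interpretation of $\CP$ (pulling back the Poincar\'e sheaf by $\mathrm{id}\times\mathrm{inv}$ gives the fibrewise dual family, i.e.\ $\CP^{\ast}$, since dualization of a rank-one degree-zero torsion-free sheaf on an integral genus-one curve is exactly the involution on the compactified Jacobian). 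Granting this, (2) is formal from (1): since $p^{\ast}\pi^{\ast}=q^{\ast}\pi^{\ast}$ on $X\times_S X$ and $\CP^{\vee}=\CP^{\ast}$, the inverse kernel gives $\phi_{\CQ}=\phi_{\CP^{\ast}}\circ\BT_{\pi^{\ast}\omega_S^{\vee}}[1]=\mathrm{inv}^{\ast}\circ\phi_{\CP}\circ\BT_{\pi^{\ast}\omega_S^{\vee}}[1]$, and substituting into $\phi_{\CQ}\circ\phi_{\CP}=\mathrm{id}$ and using that $\mathrm{inv}$ is an involution over $S$ (so $\mathrm{inv}^{\ast}\pi^{\ast}\omega_S=\pi^{\ast}\omega_S$) yields $\phi_{\CP}\circ\phi_{\CP}=\mathrm{inv}^{\ast}\circ\BT_{\pi^{\ast}\omega_S}[-1]$. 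This sidesteps entirely the case analysis on singular fibres that your route would require. Your closing remark that one could instead derive (1) from (2) runs the logic in the unhelpful direction: (2) is the harder statement to establish directly, and the paper's point is that it follows from (1).
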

\begin{proof}
For (1) see \cite{BM}
or \cite[Prop 2.5]{BBRP2}.
Then use
$\phi_{\CP^{\ast}} = \mathrm{inv}^{\ast} \circ \phi_{\CP}$
to prove the second.
The third follows by relative
Grothendieck--Verdier duality applied to the morphism
$q: X\times _BX \rightarrow X$,
and using
$\omega_{X \times_{S} X} = p^{\ast} \pi^{\ast} \omega_S^{\vee}$.
\end{proof}

\begin{lemma} \hfill \label{Lemma_phiP}
\begin{enumerate}
\item $\phi_{\CP}( \CO_X ) = \iota_{\ast} \omega_S[-1]$
\item For every line bundle $\CM$ on $S$, we have
$\phi_{\CP}( \iota_{\ast} \CM )  = \pi^{\ast} \CM$.
\item $\phi_{\CP}( \CO_X(S_0) ) = \pi^{\ast} \omega_S(-S_0)$
\item If $n>0$ then $\phi_{\CP}(\CO_X(-nS_0))[1]$
is a locally free sheaf of rank $n$.
\end{enumerate}
\end{lemma}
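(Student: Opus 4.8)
The plan is to reduce every part to a computation on the fibers of $\pi$, where $\phi_{\CP}$ restricts to the classical Fourier--Mukai transform of the integral arithmetic-genus-one curve $X_s$ with its Poincar\'e sheaf; this transform interchanges rank and (minus the) degree of rank-one objects and concentrates the image of such an object in a single cohomological degree. Throughout I write $\hat{x} \in \hat{X} \cong X$ for a point of the second factor, corresponding to the degree-zero sheaf $L_{\hat x}$ on $X_{\pi(x)}$, so that $\CP|_{X_s \times \{\hat x\}} \cong L_{\hat x}$ while, by the normalization, $\CP|_{S_0 \times_S \hat X} \cong \CO_{\hat X}$. I would prove (2) first, deduce (1) from it through Lemma~\ref{Lemma_phiP_autoequivalence}(2), then obtain (3) from (1) and (2), and finally establish (4) by direct base change.

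For (2), the sheaf $\iota_{\ast}\CM$ is supported on the divisor $S_0$, so $p^{\ast}\iota_{\ast}\CM$ is supported on $S_0 \times_S X = p^{-1}(S_0)$, and the pullback is underived because $\pi$, hence $p$, is flat. On this locus $q$ restricts to an isomorphism $S_0 \times_S X \xrightarrow{\sim} X$, the Poincar\'e sheaf restricts to $\CO$ by the normalization, and $p^{\ast}\iota_{\ast}\CM$ becomes $\pi^{\ast}\CM$ under this isomorphism. Applying $Rq_{\ast}$, which is transport along the isomorphism, gives $\phi_{\CP}(\iota_{\ast}\CM) = \pi^{\ast}\CM$. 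Taking $\CM = \omega_S$ and combining with Lemma~\ref{Lemma_phiP_autoequivalence}(2) yields (1): applying $\phi_\CP \circ \phi_\CP = \mathrm{inv}^{\ast}\circ \BT_{\pi^{\ast}\omega_S}[-1]$ to $\CO_X$ and using $\mathrm{inv}^{\ast}\pi^{\ast}\omega_S = \pi^{\ast}\omega_S$ (as $\mathrm{inv}$ is a morphism over $S$) gives $\phi_{\CP}^2(\CO_X) = \pi^{\ast}\omega_S[-1] = \phi_{\CP}(\iota_{\ast}\omega_S[-1])$, and since $\phi_{\CP}$ is an equivalence I may cancel one $\phi_{\CP}$ to conclude $\phi_{\CP}(\CO_X) = \iota_{\ast}\omega_S[-1]$.

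For (3) I would begin from the adjunction identity $\CO_X(S_0)|_{S_0} = N_{S_0/X} = \omega_S$, which gives the short exact sequence $0 \to \CO_X \to \CO_X(S_0) \to \iota_{\ast}\omega_S \to 0$. Applying $\phi_{\CP}$ and inserting (1) and (2) produces the exact triangle $\iota_{\ast}\omega_S[-1] \to \phi_{\CP}(\CO_X(S_0)) \to \pi^{\ast}\omega_S \xrightarrow{\delta} \iota_{\ast}\omega_S$. On each fiber $\CO_{X_s}(0_s)$ is a degree-one line bundle with $h^1 = h^0(\CO_{X_s}(-0_s)) = 0$ (Serre duality on the Gorenstein curve $X_s$ with $\omega_{X_s} \cong \CO_{X_s}$), so $\phi_{\CP}(\CO_X(S_0))$ is a genuine sheaf; passing to cohomology sheaves of the triangle then forces $\delta$ to be surjective and identifies $\phi_{\CP}(\CO_X(S_0)) = \ker\delta$. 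Because $\Hom(\pi^{\ast}\omega_S, \iota_{\ast}\omega_S) = \Hom(\omega_S,\omega_S)$ is one-dimensional, $\delta$ is a nonzero multiple of the restriction map to $S_0$, whose kernel is $\pi^{\ast}\omega_S(-S_0)$; this is (3).

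Finally, (4) is a direct base-change statement. For $n>0$ and any $\hat x$, the restriction of $p^{\ast}\CO_X(-nS_0)\otimes\CP$ to $X_s \times\{\hat x\}$ is $\CO_{X_s}(-n\,0_s)\otimes L_{\hat x}$, a rank-one torsion-free sheaf of degree $-n$ on the integral arithmetic-genus-one curve $X_s$, whence $h^0 = 0$ and $h^1 = n$ on every fiber. Representing $Rq_{\ast}$ by a two-term complex of locally free sheaves, the vanishing $h^0 = 0$ forces $R^0 q_{\ast} = 0$ (a subsheaf of a locally free sheaf of generic rank zero is torsion, hence zero), so $\phi_{\CP}(\CO_X(-nS_0))$ sits in cohomological degree one, and the constancy $h^1 \equiv n$ makes $\phi_{\CP}(\CO_X(-nS_0))[1] = R^1q_{\ast}(p^{\ast}\CO_X(-nS_0)\otimes\CP)$ locally free of rank $n$ by Grauert's theorem. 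The one genuinely delicate point across the lemma is pinning down the precise twist by $\omega_S$ in (1) and (3): the bootstrap through Lemma~\ref{Lemma_phiP_autoequivalence}(2) lets me sidestep a direct relative Serre duality computation for (1) (which would run through $\omega_q = q^{\ast}\pi^{\ast}\omega_S^{\vee}$), while the one-dimensionality of the relevant $\Hom$ space pins the line bundle down in (3).
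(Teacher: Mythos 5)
Your proof is correct and follows essentially the same route as the paper: both rest on the normalization $\CP|_{S_0\times_S\hat X}\cong\CO_{\hat X}$ (giving $\phi_\CP(\iota_\ast\CM)=\pi^\ast\CM$), the identity $\phi_\CP\circ\phi_\CP=\mathrm{inv}^\ast\circ\BT_{\pi^\ast\omega_S}[-1]$ for (1), the sequence $0\to\CO_X\to\CO_X(S_0)\to\iota_\ast\omega_S\to0$ with the fiberwise vanishing $H^1(X_s,\CO_{X_s}(0_s))=0$ for (3), and cohomology-and-base-change for (4). The only (harmless) differences are that you prove (2) before (1), and you spell out the identification of the connecting map $\delta$ via $\dim\Hom(\pi^\ast\omega_S,\iota_\ast\omega_S)=1$, which the paper leaves implicit.
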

\begin{proof}
(1) We have
$\CO_X = \phi_{\CP}(\CO_{S_0})$.
Hence, by Lemma \ref{Lemma_phiP_autoequivalence} (ii)
\[ \phi_{\CP}(\CO_X) = \phi_{\CP}(\phi_{\CP}(\CO_{S_0}))
= \pi^{\ast} \omega_S \otimes \CO_S[-1] = \iota_{\ast} \omega_S[-1] \,. \]
\noindent (2) Since tensoring with a line bundle pulled back
from $S$ commutes with $\phi_{\CP}$,
this follows from (1) by tensoring with $\pi^{\ast} \CM$.

\noindent (3)
The vanishing
$H^1(X_s, \CO_{X_s}(0_s)) = 0$
for all $s \in S$
implies $\phi_\CP(\CO_X(S_0))$ is concentrated in degree $0$.
Applying $\phi_{\CP}$ to the short exact sequence
\[ 0 \to \CO_X \to \CO_X(S_0) \to \CO_{S_0}(S_0) = \iota_{\ast} \omega_S \to 0 \]
and using (1) and (2) we obtain the exact sequence
\[ 0 \to \phi_{\CP}( \CO_X(S_0) ) \to \pi^{\ast} \omega_S
\to \iota_{\ast} \omega_S \to 0 \,. \]

\noindent (4)
This follows either directly using the base change formula,
or alternatively by induction over $n$
using the sequence
\[ 0 \to \CO_X(-n S_0) \to \CO_{X}(-(n-1)S_0) \to \iota_{\ast} \omega_S^{-(n-1)} \to 0 \,. \qedhere \]
\end{proof}

\subsection{Sheaves supported on fibers}
Let
\[ \alpha \in \Pic(S) \]
be a ample divisor on $S$ such that
$\alpha + K_S$ is ample. The induced divisor
\[ \lambda = \pi^{\ast} \alpha + S_0 \in \Pic(X) \]
is ample by an application of the Nakai--Moishezon criterion.

Consider the full subcategory of sheaves supported in dimension $\leq 1$,
\[ \Coh^{\leq 1}(X) \subset \Coh(X) \,. \]
The slope of a sheaf $A \in \Coh^{\leq 1}(X)$ is defined by
\[
\mu(A) = \frac{\chi(A)}{\ch_2(A) \cdot \lambda} \ \in (-\infty, \infty]
\]
with the convention that 0-dimensional sheaves have slope $+\infty$.
A sheaf~$B$ is stable (semistable) if
$\mu(A) \, {<} {(\leq)} \, \mu(B)$
for every proper subsheaf~$A \subset B$.

We say a sheaf $A \in \Coh^{\leq 1}(X)$ is \emph{supported on fibers of}
 $\pi$ if the reduced support of $A$ is contained
in the union of finitely many fibers $X_s, s\in S$.
Let
\[ \CC \subset \Coh^{\leq 1}(X) \]
be the full subcategory of sheaves supported
on fibers of $\pi$.
The existence and uniqueness of Harder--Narasimhan filtrations
in $\CC$ with respect to $\mu$ is
induced by the corresponding properties in $\mathrm{Coh}^{\leq 1}(X)$.

For an interval
$I \subset (-\infty, \infty]$ we let
$\CC_I$ denote the extension closure
of all semistable sheaves in $\CC$ with slope in $I$,
together with the zero object,
\begin{equation} \label{CCI} \CC_{I}
=
\left\langle A \in \CC\, \middle|\,
\begin{array}{c}
A \mbox{ is } \mu \mbox{-semistable} \\
\mbox{ with } 
\mu(A) \in I 
\end{array}
\right\rangle \cup\{0\}.
\end{equation}

The following proposition connects 
the notion of semi-stability
to the Fourier--Mukai transform $\phi_{\CP}$.
\begin{prop} \label{Prop_Action_on_Semistables_in_CC}
Let $A \in \CC$ be a semistable sheaf
of slope $\mu_0 = \mu(A)$.
With the convention $-1/\infty = 0$ and $-1/0 = \infty$
we have:
\begin{enumerate}
\item If $\mu_0 \in (0, \infty]$,
then $\phi_{\CP}(A)$ is a semistable sheaf in $\CC$ of slope $-1/\mu_0$.
\item If $\mu_0 \in (-\infty, 0]$, then
$\phi_{\CP}(A) = T[-1]$ for a semistable sheaf
$T \in \CC$ of slope~$-1/\mu_0$.
\end{enumerate}
\end{prop}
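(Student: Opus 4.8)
The plan is to reduce the statement to a study of how the Fourier--Mukai transform $\phi_{\CP}$ acts on semistable sheaves supported on a single fiber, and then to bootstrap from the stable case to the semistable case using Harder--Narasimhan/Jordan--H\"older theory. First I would record the numerical effect of $\phi_{\CP}$ on the Chern character. For a sheaf $A \in \CC$ supported on fibers, the relevant invariants are $\chi(A)$ and the fiber-degree $\ch_2(A) \cdot \lambda = \ch_2(A) \cdot S_0$ (the $\pi^{\ast}\alpha$ term dies on fiber classes since fibers map to points). Using Grothendieck--Riemann--Roch for the transform $\phi_{\CP}$, or directly via Lemma~\ref{Lemma_phiP} on generators, I would verify the index-type exchange
\[
\chi(\phi_{\CP}(A)) = -(\ch_2(A) \cdot \lambda), \qquad
\ch_2(\phi_{\CP}(A)) \cdot \lambda = \chi(A),
\]
up to the signs forced by the normalization of $\CP$. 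This is exactly the numerical content behind the slope formula $\mu_0 \mapsto -1/\mu_0$, and it cleanly explains the endpoint conventions: $\mu_0 = \infty$ (zero-dimensional $A$, where $\ch_2 \cdot \lambda = 0$) maps to slope $0$, and $\mu_0 = 0$ maps to $\infty$.

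Next I would establish the two cases at the level of \emph{stable} objects. The key structural input is that $\phi_{\CP}$ is an auto-equivalence (Lemma~\ref{Lemma_phiP_autoequivalence}) together with a \emph{WIT}-type vanishing: I claim that a $\mu$-stable $A \in \CC$ of slope $\mu_0 > 0$ has $\phi_{\CP}(A)$ concentrated in degree $0$, while for $\mu_0 \le 0$ it is concentrated in degree $1$. To prove the relevant cohomology sheaf vanishes, I would argue fiberwise: the transform is compatible with restriction to fibers, so it suffices to treat a semistable sheaf on a single integral genus-$1$ curve, where $\phi_{\CP}$ is the classical elliptic-curve Fourier--Mukai (extended across nodal/cuspidal fibers as in \cite{BM}). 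On such a curve a semistable sheaf of positive degree is $IT_0$ (its transform is a sheaf) and of negative degree is $IT_1$, by the standard vanishing $H^0 = 0$ resp.\ $H^1 = 0$ for semistable sheaves of the appropriate degree against the Poincar\'e line bundle. Semicontinuity/base change then upgrades the fiberwise statement to concentration of $\phi_{\CP}(A)$ in a single degree globally. Once concentration is known, stability of the transform is equivalent to stability of $A$ because $\phi_{\CP}$ is an equivalence preserving the subcategory $\CC$ and reversing slopes monotonically (via $\mu \mapsto -1/\mu$): any destabilizing subobject of the transform would pull back, under $\phi_{\CP}^{-1} = \phi_{\CQ}$, to a destabilizing subobject of $A$.

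Then I would pass from stable to semistable by induction on the length of a Jordan--H\"older filtration. Given a semistable $A$ of slope $\mu_0$, take a short exact sequence $0 \to A' \to A \to A'' \to 0$ with $A'$ stable of slope $\mu_0$ and $A''$ semistable of slope $\mu_0$. Since all terms have the \emph{same} slope $\mu_0$, they all land in the same case of the dichotomy, hence all are $IT_0$ (if $\mu_0 > 0$) or all $IT_1$ (if $\mu_0 \le 0$); applying the long exact sequence for the transform produces a short exact sequence of transformed sheaves (the connecting maps vanish by the common concentration degree). By induction the outer terms of the transformed sequence are semistable of slope $-1/\mu_0$, and an extension of semistables of equal slope is semistable, so $\phi_{\CP}(A)$ (resp.\ $T = \phi_{\CP}(A)[1]$) is semistable of slope $-1/\mu_0$. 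This closes both cases.

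The main obstacle I anticipate is the fiberwise vanishing over the \emph{singular} fibers of types (ii) and (iii). On a smooth elliptic curve the $IT_i$ dichotomy for semistable sheaves is classical, but on nodal and cuspidal cubics one must control torsion-free rank-$1$ sheaves that fail to be locally free at the singular point, and verify that the extended Poincar\'e sheaf $\CP$ still kills the unwanted cohomology. I would handle this by invoking the analysis of \cite{BM} for the Poincar\'e sheaf on integral genus-$1$ fibers, where the relevant $IT$/WIT statements and the preservation of (semi)stability are available; the global upgrade is then a formal consequence of flatness of $\CP, \CP^{\ast}$ over both factors and cohomology-and-base-change. A secondary technical point is to pin down all signs in the Chern character computation consistently with the paper's normalization $\CP|_{S_0 \times_S \hat X} \simeq \CO_{\hat X}$ and the twist defining $\CP$ from $\CI_\Delta^{\ast}$, so that the slope really transforms as $-1/\mu_0$ rather than its reciprocal with a spurious sign.
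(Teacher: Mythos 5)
Your proposal is correct and takes essentially the same route as the paper: the paper likewise reduces to the stable case by a Jordan--H\"older filtration (so that $A$ is the pushforward of a stable sheaf from a single fiber $X_s$) and then quotes the Fourier--Mukai analysis on integral genus-one fibers (Burban--Kreussler, or Bartocci--Bruzzo--Hern\'andez Ruip\'erez--Mu\~noz Porras) for the stable fiberwise statement, exactly the input you invoke via \cite{BM}. The one small imprecision is that at $\mu_0 = 0$ a stable sheaf is WIT$_1$ but not IT$_1$ (the fiberwise $H^0$ jumps at finitely many points $y$), so the ``semicontinuity upgrades IT to global concentration'' step must be replaced by the standard WIT argument there; this is covered by the references and does not affect the conclusion.
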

\begin{proof}
By taking a Jordan--H\"older filtration
we may assume $A$ is stable and hence
the pushforward of a stable sheaf from
a fiber $X_s$. The claim then follows
from \cite[Thm 2.21]{BK} or along the lines of
\cite[Thm 3.2]{BBRP2}.
\end{proof}

\section{$\pi$-stable pairs and wall-crossing} \label{Section_pi_stable_pairs}

\subsection{Stable and $\pi$-stable pairs}
A stable pair is the datum $(\CF,s)$ of a pure $1$-dimensional sheaf $\CF$
and a section $s \in H^0(X,\CF)$ with $0$-dimensional cokernel. Following \cite{PT1}
we identify a stable pair $(\CF,s)$ with the complex
\[ I^{\bullet} = [\CO_X \xrightarrow{s} \CF] \]
in $D^b \Coh( X )$ where $\CO_X$ sits in degree $0$.
A stable pair has class $\ch_2(\CF) = \beta$ and Euler characteristic
$n$ precisely if $\ch(I^{\bullet}) = (1,0, -\beta, -n)$.

Recall the following characterization of stable pairs
in \cite{Lo}, see also \cite[Defn. 3.1]{T16}.

\begin{lemma} \label{Lemma_Pairs_characterization}
An object $I^{\bullet} \in D^b \Coh(X)$
with $\ch(I^{\bullet}) = (1,0, -\beta, -n)$ is a stable pair if and only if
\begin{enumerate}
\item[(1)] $h^{i}(I^{\bullet}) = 0$ whenever $i \neq 0,1$.
\item[(2)] $h^0( I^{\bullet} )$ is torsion free
and $h^1(I^{\bullet})$ is $0$-dimensional.
\item[(3)] $\Hom( Q[-1], I^{\bullet} ) = 0$
for every $0$-dimensional sheaf $Q$.
\end{enumerate}
\end{lemma}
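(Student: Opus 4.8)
The plan is to prove both implications by passing between the complex $I^{\bullet}$ and the pair $(\CF,s)$ through the exact triangle
\[ I^{\bullet} \to \CO_X \xrightarrow{s} \CF \to I^{\bullet}[1], \qquad \CF := \mathrm{cone}(I^{\bullet}\to\CO_X), \]
and identifying condition (3) with the purity of $\CF$. The tool used everywhere is the vanishing $\Ext^i_X(Q,\CO_X)=0$ for $i\le 2$ and any $0$-dimensional sheaf $Q$, which holds because such $Q$ has codimension $3$ on the smooth threefold $X$; this lets one transport $\Hom$'s freely across the triangle.

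\emph{Forward direction.} Given a stable pair $(\CF,s)$ we have by definition $I^{\bullet}=[\CO_X\xrightarrow{s}\CF]$ with $\CO_X$ in degree $0$, so $h^0(I^{\bullet})=\ker(s)$, $h^1(I^{\bullet})=\mathrm{coker}(s)$, and all other cohomology vanishes, giving (1). Since $\ker(s)\subset\CO_X$ it is torsion free, and $\mathrm{coker}(s)$ is $0$-dimensional by stability axiom (ii), which is (2). For (3) I would apply $\Hom(Q,-)$ for $0$-dimensional $Q$ to the triangle $I^{\bullet}\to\CO_X\to\CF$: using $\Hom(Q,\CO_X)=0$ and $\Ext^1_X(Q,\CO_X)=0$ one obtains $\Hom(Q[-1],I^{\bullet})=\Hom(Q,I^{\bullet}[1])\cong\Hom(Q,\CF)$, and the right-hand side vanishes for every such $Q$ precisely because $\CF$ is pure (axiom (i)).

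\emph{Reverse direction.} Assume (1)--(3). By (1) and (2) the object $I^{\bullet}$ has only the two cohomology sheaves $G:=h^0(I^{\bullet})$ (torsion free) and $Q:=h^1(I^{\bullet})$ ($0$-dimensional), fitting in a triangle $G\to I^{\bullet}\to Q[-1]$. Comparing Chern characters in this triangle with $\ch(I^{\bullet})=(1,0,-\beta,-n)$ forces $\mathrm{rk}\,G=1$ and $c_1(G)=0$, so $G^{\vee\vee}$ is a reflexive rank-$1$ sheaf, hence a line bundle with $c_1=0$; since $H^1(X,\CO_X)=0$ kills $\Pic^0(X)$, it is isomorphic to $\CO_X$, and therefore $G=\CI_Z$ is the ideal sheaf of a subscheme $Z$ of dimension $\le 1$. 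To build the section I would note that the same triangle together with $\Ext^1_X(Q,\CO_X)=\Ext^2_X(Q,\CO_X)=0$ gives $\Hom(I^{\bullet},\CO_X)\cong\Hom(G,\CO_X)$, so the inclusion $\CI_Z\hookrightarrow\CO_X$ lifts canonically to a morphism $I^{\bullet}\to\CO_X$; set $\CF:=\mathrm{cone}(I^{\bullet}\to\CO_X)$.

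Finally I would read off the structure of $\CF$ from the cohomology long exact sequence of $I^{\bullet}\to\CO_X\to\CF$. Injectivity of $\CI_Z\hookrightarrow\CO_X$ gives $h^{-1}(\CF)=0$, so $\CF$ is an honest sheaf sitting in $0\to\CO_Z\to\CF\to Q\to 0$; hence $\CF$ is supported in dimension $\le 1$, the induced $s:\CO_X\to\CF$ has cokernel $Q$ which is $0$-dimensional, and by construction $I^{\bullet}=[\CO_X\xrightarrow{s}\CF]$. Purity of $\CF$ is exactly the reverse translation of (3): the computation of the forward direction run backwards gives $\Hom(Q',\CF)\cong\Hom(Q'[-1],I^{\bullet})=0$ for every $0$-dimensional $Q'$, so $\CF$ has no $0$-dimensional subsheaf. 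Thus $(\CF,s)$ is a stable pair. The step I expect to need the most care is the bookkeeping identifying (3) with purity in both directions, as it rests on the codimension-$3$ Ext-vanishing and on keeping the degree shifts straight; verifying the lift $I^{\bullet}\to\CO_X$ and that $\CF$ is concentrated in a single degree is the other point demanding attention.
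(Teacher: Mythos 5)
The paper does not actually prove this lemma --- it is quoted directly from \cite{Lo} and \cite[Defn.~3.1]{T16} --- so there is no internal proof to compare against; your argument is correct and is essentially the standard proof underlying those references. In particular the two key reductions are exactly right: transporting purity of $\CF$ across the triangle $I^{\bullet}\to\CO_X\to\CF$ via the vanishing $\Ext^{i}_X(Q,\CO_X)=0$ for $i\le 2$ and $Q$ zero-dimensional, and recovering the section from the isomorphism $\Hom(I^{\bullet},\CO_X)\cong\Hom(h^0(I^{\bullet}),\CO_X)$ after identifying $h^0(I^{\bullet})$ with an ideal sheaf.
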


The definition of $\pi$-stable pairs
is parallel to the characterization above
but with $0$-dimensional sheaves
replaced by \emph{$\pi$-torsion sheaves}
which are defined as follows.

\begin{defn} \label{Defn_pi_torsion}
A sheaf $A \in \Coh^{\leq 1}(X)$ is $\pi$-torsion if the following conditions are satisfied:
\begin{enumerate}
\item $A$ is supported on fibers of $\pi : X \to S$,
\item $\phi_{\CP}(A(S_0))$ is a sheaf.
\end{enumerate}
\end{defn}

We define $\pi$-stable pairs with
respect to the elliptic fibration $\pi: X \rightarrow S$.
\begin{defn} 
An object $I^{\bullet} \in D^b \Coh(X)$
with $\ch(I^{\bullet}) = (1,0, -\beta, -n)$
is a $\pi$-stable pair if
\begin{enumerate}
\item[(1)] $h^{i}(I^{\bullet}) = 0$ whenever $i \neq 0,1$.
\item[(2)] $h^0( I^{\bullet} )$ is torsion free
and $h^1(I^{\bullet})$ is $\pi$-torsion.
\item[(3)] $\Hom( Q[-1], I^{\bullet} ) = 0$
for every $\pi$-torsion sheaf $Q$.
\end{enumerate}
\end{defn}

In \cite[4.2]{T16} Toda considered objects
in the derived category
which are characterized by the conditions of Lemma~\ref{Lemma_Pairs_characterization}
but with $0$-dimensional sheaves
replaced by certain sheaves supported in dimension $\leq 1$.
The associated invariants were termed $L$-invariants
and play a central role in the proof
of rationality of
Pandharipande--Thomas invariants.
Hence we may consider $\pi$-stable pairs
to be a variant of the objects
defining $L$-invariants.
In particular, the definition of
$\pi$-stable pair invariants
and the proof of the wall-crossing formulas discussed below
are parallel to the discussion in \cite{T16}
and we will be brief.

\subsection{Slope stability}
Let $\CL$ be a polarization on the 3-fold $X$.
The $\CL$-slope of a coherent sheaf $\CE \in \Coh(X)$ is
\begin{equation*}
\mu_{\CL}(\CE)
= \frac{c_1(\CE) \cdot \CL^2}{\mathrm{rank}(\CE)} \ 
\in \BQ \cup \{ \infty \}. \label{mu_omega_slope}
\end{equation*}
A sheaf $\CG \in \Coh(X)$ is $\mu_{\CL}$-stable (resp. $\mu_{\CL}$-semistable) if 
$\mu_{\CL}(\CE) \, {<} \, {(\leq)} \, \mu_{\CL}(\CG)$
for every proper subsheaf $\CE \subset \CG$ of strictly smaller rank.
For an interval $I \subset \BR \cup \{ \infty \}$ let
\[ \Coh_I(X) =
\left\langle A \in \Coh(X)\, \middle|\,
\begin{array}{c}
A \mbox{ is } \mu_{\CL} \mbox{-semistable} \\
\mbox{ with } 
\mu_{\CL}(A) \in I 
\end{array}
\right\rangle \cup\{0\}
\]
be the extension closure of semistable sheaves of slope in $I$,
together with the zero object.
Let
\[ \CA = \langle \Coh_{\leq 0}(X), \Coh_{>0}[-1] \rangle \,. \]
be the tilt of $\Coh(X)$
along the torsion pair
$(\Coh_{>0}(X), \Coh_{\leq 0}(X))$.
As in \cite[3.3]{T16}
we will work
inside the full abelian subcategory
\[
\CB
= \big\langle \Coh_0(X), \Coh^{\leq 1}(X)[-1] \big\rangle
\subset \CA \,.
\]

\subsection{$\pi$-stable pair invariants}
\label{Section_pi_stable_pair_invariants}
Let $\CM$ be the moduli stack of objects 
in $\CA$ with fixed Chern character $(1,0,-\beta,-n)$ and let
\begin{equation} \CP_n^{\pi}(X, \beta) \subset \CM \label{4343} \end{equation}
be the substack of $\pi$-stable pairs
with $\ch(I^{\bullet}) = (1, 0, -\beta, -n)$.

Completely parallel to the case of
$L$-invariants in \cite[4.2]{T16}
we have the following Lemma.

\begin{lemma} \label{Lemma_constructible}
The $\BC$-valued points of $\CP_n^{\pi}(X, \beta)$
form a constructible subset of $\CM$,
and
$\mathrm{Aut}(I^\bullet) = \BC^\ast$ for every $\pi$-stable pair $I^\bullet$.
\end{lemma}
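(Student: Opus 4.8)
The plan is to follow the template of Toda's argument for $L$-invariants in \cite[4.2]{T16}, adapting each constructibility check to the $\pi$-torsion setting. The two assertions are logically independent, so I would treat them separately.

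\textbf{Constructibility.} First I would recall that the defining conditions of a $\pi$-stable pair are cohomological and open/closed-type conditions on the moduli stack $\CM$ of objects in $\CA$ of fixed Chern character $(1,0,-\beta,-n)$. Conditions (1) and (2) --- that $h^i(I^\bullet)=0$ for $i\neq 0,1$, that $h^0(I^\bullet)$ is torsion free, and that $h^1(I^\bullet)$ is $\pi$-torsion --- are constructible because taking cohomology sheaves of a flat family, imposing torsion-freeness, and imposing support-on-fibers are all constructible conditions in a family; the extra clause in Definition~\ref{Defn_pi_torsion} that $\phi_{\CP}(h^1(I^\bullet)(S_0))$ be a sheaf (rather than a genuine two-term complex) is constructible because $\phi_{\CP}$ is a Fourier--Mukai transform, hence takes flat families to families of complexes, and the locus where such a family is concentrated in a single degree is constructible. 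Condition (3), the vanishing $\Hom(Q[-1],I^\bullet)=0$ for every $\pi$-torsion sheaf $Q$, is the subtle one: I would argue, exactly as in \cite{T16}, that it suffices to test against a bounded family of such $Q$ (the relevant $Q$ are supported on finitely many fibers with bounded numerical invariants determined by $\beta$), so that the vanishing becomes a closed condition on a bounded parameter space and the cut-out locus is constructible. Intersecting the finitely many constructible loci gives that the $\BC$-points of $\CP_n^\pi(X,\beta)$ form a constructible subset of $\CM$.

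\textbf{Automorphisms.} For the second claim I would show $\Hom(I^\bullet, I^\bullet)=\BC$, which forces $\mathrm{Aut}(I^\bullet)=\BC^\ast$ since any endomorphism is a scalar and any nonzero scalar is invertible. Write $\phi\in\Hom(I^\bullet,I^\bullet)$. Using the cohomology sheaves $h^0=h^0(I^\bullet)$ (torsion free) and $h^1=h^1(I^\bullet)$ ($\pi$-torsion), together with the long exact sequences from the truncation triangle, I would argue that $\phi$ is determined by a scalar. The key inputs are: $\mathrm{rank}(h^0)=1$ with $h^0$ torsion free, so $\Hom(h^0,h^0)=\BC$ (a nonzero endomorphism of a rank-one torsion-free sheaf is multiplication by a constant, as $X$ is integral); and condition (3) together with purity controls the cross terms. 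The mechanism is identical to the stable-pairs case: any endomorphism which is not a scalar would produce, via its kernel or image, a destabilizing sub- or quotient-object contradicting one of the three conditions, or else a nonzero map from a $\pi$-torsion shift violating~(3).

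The main obstacle I expect is the constructibility of condition (3), specifically justifying that the a priori infinite test family of $\pi$-torsion sheaves $Q$ can be replaced by a bounded one. This requires a boundedness statement for the $\pi$-torsion sheaves that can map nontrivially into $I^\bullet$, controlled by the Chern character $(1,0,-\beta,-n)$; the role of the clause ``$\phi_{\CP}(Q(S_0))$ is a sheaf'' in Definition~\ref{Defn_pi_torsion} must be used to ensure the category of $\pi$-torsion sheaves is well-behaved under the transform, so that boundedness is preserved. Since the excerpt explicitly says the proof is ``completely parallel'' to \cite[4.2]{T16} and promises to be brief, I would cite Toda's boundedness and constructibility arguments for $L$-invariants and indicate only the modifications needed to replace ``supported in dimension $\leq 1$'' by ``$\pi$-torsion,'' rather than reproving them from scratch.
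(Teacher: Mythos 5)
Your proposal is correct and takes essentially the same route as the paper, which gives no details beyond asserting that the argument is completely parallel to Toda's treatment of $L$-invariants in \cite[4.2]{T16}; your elaboration of the constructibility of each condition and the scalar-endomorphism argument via condition (3) applied to $Q = h^1(I^\bullet)$ is exactly the intended adaptation. One small simplification you could borrow from the paper: by Proposition \ref{Prop_Action_on_Semistables_in_CC} a sheaf is $\pi$-torsion precisely when it lies in $\CC_{(-1,\infty]}$, so both the sheaf-theoretic clause of Definition \ref{Defn_pi_torsion} and the boundedness needed for condition (3) reduce to slope conditions already handled in \cite{T16}.
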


We define the $\pi$-stable pair invariant
\[ \mathsf{P}^{\pi}_{n,\beta} \in \BZ \]
by taking the motive defined by the inclusion \eqref{4343},
multiplying by the motive $[\BC^{\ast}]$
and applying the integration map
of the motivic hall algebra of $\CA$, compare \cite[4.2]{T16}.
By Lemma~\ref{Lemma_constructible}
the invariant $\mathsf{P}^{\pi}_{n,\beta}$ is well-defined.

\subsection{Wall-crossing}
We follow closely the discussion in \cite[3.7]{T16}.
Let 
\[ \Coh_{\pi}(X) \subset D^b \Coh(X)
\quad \text{and} \quad
\Coh_{P}(X) \subset D^b \Coh(X)
\]
be the full categories of $\pi$-stable pairs
and stable pairs respectively,
and recall from \eqref{CCI}
the category $\CC_{I}$
of fiber sheaves
defined by the interval $I \subset \BR \cup \{ \infty \}$.

By Proposition \ref{Prop_Action_on_Semistables_in_CC}
a sheaf $A$ is $\pi$-torsion precisely if
it is an element of $\CC$ and
all its Harder-Narasimhan factors have slope $\mu > -1$.
Hence an argument identical to
the proof of \cite[Lem. 3.16]{T16}
shows the following Lemma.

\begin{lemma} \label{wallcrossing}
We have the following identity in $\CB$:
\[ 
\blangle \Coh_{P}(X), \CC_{(-1, \infty)}[-1] \brangle
\, = \,
\blangle \CC_{(-1, \infty)}[-1],
\Coh_{\pi}(X)
\brangle \,.
\]
\end{lemma}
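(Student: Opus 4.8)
The plan is to follow the proof of \cite[Lem.~3.16]{T16} essentially verbatim, the only genuinely new ingredient being the translation of the $\pi$-torsion condition into a slope condition. By Proposition~\ref{Prop_Action_on_Semistables_in_CC} a sheaf is $\pi$-torsion exactly when it lies in $\CC$ and all of its Harder--Narasimhan factors have slope $\mu > -1$; equivalently, the $\pi$-torsion sheaves are precisely the objects of $\CC_{(-1,\infty]}$. With this identification the lemma becomes a torsion-pair manipulation inside $\CB$, and I would first record the closure properties that drive it: $\CC_{(-1,\infty]}$ is closed under quotients and extensions (the quotient bound is $\mu_{\min}(\text{quotient}) \geq \mu_{\min} > -1$), so it is a torsion class, whereas $\CC_{(-1,\infty)}$ is its purely one-dimensional part, obtained by discarding the slope-$\infty$ ($0$-dimensional) piece.

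Next I would check that both ordered brackets are genuine two-step filtration categories with unique filtrations, which amounts to two semiorthogonality statements. The vanishing $\Hom(\CC_{(-1,\infty)}[-1], \Coh_{\pi}(X)) = 0$ is immediate, since any $A' \in \CC_{(-1,\infty)}$ is $\pi$-torsion and $\Hom(A'[-1], J^{\bullet}) = 0$ is exactly condition~(3) in the definition of a $\pi$-stable pair $J^{\bullet}$. The vanishing $\Hom(\Coh_{P}(X), \CC_{(-1,\infty)}[-1]) = 0$ follows by splitting a stable pair $I^{\bullet}$ into cohomology: the only possibly nonzero contribution to $\Hom(I^{\bullet}, A[-1])$ is $\Hom(h^{1}(I^{\bullet}), A)$, which vanishes because $h^{1}(I^{\bullet})$ is $0$-dimensional while $A \in \CC_{(-1,\infty)}$ is purely one-dimensional.

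The core of the argument is to identify both sides with one intrinsic category. I would let $\mathcal{D} \subset \CB$ consist of the objects $E$ with $h^{0}(E)$ torsion free, $h^{1}(E)$ $\pi$-torsion, and $\Hom(Q[-1], E) = 0$ for every $0$-dimensional sheaf $Q$, and prove both brackets equal $\mathcal{D}$. The inclusions into $\mathcal{D}$ are cohomological: for $E$ on the left, the triangle $I^{\bullet} \to E \to A[-1]$ gives $h^{0}(E) = h^{0}(I^{\bullet})$ together with $0 \to h^{1}(I^{\bullet}) \to h^{1}(E) \to A \to 0$, exhibiting $h^{1}(E)$ as an extension of $A$ by a $0$-dimensional sheaf, hence $\pi$-torsion; and any $Q[-1] \to E$ with $Q$ $0$-dimensional factors through $I^{\bullet}$ (as $\Hom(Q,A) = 0$) and so vanishes by stability of $I^{\bullet}$, the right-hand inclusion being symmetric. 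For the reverse inclusions, given $E \in \mathcal{D}$ I would extract the maximal subobject $A'[-1] \subset E$ with $A' \in \CC_{(-1,\infty)}$ and argue that $E/A'[-1]$ is a $\pi$-stable pair, and dually split the slope-$(-1,\infty)$ part of $h^{1}(E)$ off as a quotient $A[-1]$, leaving a genuine stable pair as subobject.

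The main obstacle is the verification of condition~(3) for these extracted objects, where the two ends of the interval must be handled with care. At the $\mu = \infty$ end, I expect a hypothetical $0$-dimensional destabilizer $Q[-1]$ of the quotient $E/A'[-1]$ to pull back to a subobject $\tilde{E} \subset E$ sitting in $0 \to A'[-1] \to \tilde{E} \to Q[-1] \to 0$; the liftable $0$-dimensional part of $\tilde{E}[1]$ would then give a $0$-dimensional subobject of $E$, contradicting $E \in \mathcal{D}$, while if nothing lifts then $\tilde{E}[1]$ is purely one-dimensional of slope in $(-1,\infty)$ and contradicts the maximality of $A'$. Making this dichotomy precise --- so that maximality within the \emph{open} interval and the no-$0$-dimensional-subobject condition together rule out all $\pi$-torsion destabilizers --- is the delicate step. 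The $\mu = -1$ end enters only through the closure properties above, where the strict inequality $\mu > -1$ of Proposition~\ref{Prop_Action_on_Semistables_in_CC} pins down the wall; with both ends settled, the formalism of \cite[3.7]{T16} applies and gives the stated equality in $\CB$.
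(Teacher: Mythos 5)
Your proposal is correct and takes the same route as the paper: the paper's entire proof consists of the observation (via Proposition~\ref{Prop_Action_on_Semistables_in_CC}) that the $\pi$-torsion sheaves are exactly the objects of $\CC_{(-1,\infty]}$, followed by the remark that the argument of \cite[Lem.~3.16]{T16} then applies verbatim. You make the same translation and then correctly unwind what that Toda-style argument looks like (semiorthogonality, identification of both brackets with the intrinsic category $\mathcal{D}$, and the maximality-versus-purity dichotomy at the two ends of the interval), so your write-up is in fact more detailed than the paper's.
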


Define the generating series of $\pi$-stable pair invariants
\begin{equation*} 
\mathrm{PT}^{\pi}_H(q,t)
= \sum_{d \geq 0} \sum_{n \in \BZ}
\mathsf{P}^{\pi}_{n, H + dF} q^n t^d
\end{equation*}
and let
\begin{equation*}
f(q,t)
= \prod_{\ell, m \geq 1} (1- (-q)^{\ell} t^{m} )^{-\ell \cdot e(X)} \,.
\end{equation*}

The main result of this section is the following
PT/$\pi$-PT correspondence.

\begin{prop} \label{PropWallCross} 
For every effective $H \in \Pic(S)$, we have
\[
\mathrm{PT}_H(q,t)
=
f( tq^{-1}, t)^{-1} f(q, t) \cdot \mathrm{PT}^{\pi}_H(q,t) \,.
\]
\end{prop}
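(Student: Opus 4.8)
The plan is to lift the categorical identity of Lemma~\ref{wallcrossing} to the motivic Hall algebra of $\CA$ and then to apply the integration map, following the strategy of Toda \cite[Lem.~3.16]{T16}. Both sides of Lemma~\ref{wallcrossing} exhibit the common subcategory $\blangle \Coh_P(X), \CC_{(-1,\infty)}[-1]\brangle$ as an \emph{ordered} extension closure, i.e.\ as a torsion-pair factorization inside $\CB$. Writing $1_{\mathcal D}$ for the characteristic element (the motive of the stack of objects of $\mathcal D$), the torsion-pair property turns each factorization into a product in the Hall algebra, so that Lemma~\ref{wallcrossing} becomes the identity
\[
1_{\Coh_P(X)} \ast 1_{\CC_{(-1,\infty)}[-1]}
= 1_{\CC_{(-1,\infty)}[-1]} \ast 1_{\Coh_\pi(X)} \,.
\]

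First I would verify that these elements lie in the integrable (semiclassical) subalgebra on which the integration map is defined. For the two pair-factors this is guaranteed by Lemma~\ref{Lemma_constructible}, which gives $\mathrm{Aut}(I^\bullet)=\BC^\ast$ for every ($\pi$-)stable pair; for the fiber factor one argues exactly as for the $L$-invariants in \cite[4.2]{T16}. Applying the integration map then produces an identity of generating series in the quantum torus attached to the numerical Grothendieck group, where the product is twisted by the Euler pairing $\chi(-,-)$.

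The heart of the computation is the evaluation of the three images. The images of $1_{\Coh_P(X)}$ and $1_{\Coh_\pi(X)}$ are, by definition of the invariants, the series $\mathrm{PT}_H(q,t)$ and $\mathrm{PT}^{\pi}_H(q,t)$; one restricts to the classes $H+dF$, and the section part $H$ is preserved since the remaining factor carries only fiber classes $mF$. The image of $1_{\CC_{(-1,\infty)}[-1]}$ is the generating series of Behrend-weighted semistable fiber sheaves whose Harder--Narasimhan slopes all lie in $(-1,\infty)$; by Proposition~\ref{Prop_Action_on_Semistables_in_CC} this slope condition is exactly the $\pi$-torsion condition, and Toda's computation identifies the resulting count with $f(q,t)$, the $1$-dimensional part of the series in \eqref{Toda_result}. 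The essential point is that the two occurrences of this factor are \emph{not} symmetric: commuting the rank-zero fiber element past the rank-one pair element in the twisted product substitutes $q \mapsto tq^{-1}$, the shift being governed by the Euler pairing $\chi(I^\bullet, A)=\chi(A)$ between a stable pair $I^\bullet$ and a fiber sheaf $A$, which couples the fiber degree to the Euler characteristic. Hence the factor appears as $f(tq^{-1},t)$ to the right of $\mathrm{PT}_H$ and as $f(q,t)$ to the left of $\mathrm{PT}^{\pi}_H$, and the integrated identity reads
\[
\mathrm{PT}_H(q,t)\, f(tq^{-1},t) = f(q,t)\, \mathrm{PT}^{\pi}_H(q,t) \,,
\]
which rearranges to the claim.

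I expect the main obstacle to be the explicit evaluation of the image of $1_{\CC_{(-1,\infty)}[-1]}$ together with the precise bookkeeping of the twist: one must track how the $[-1]$-shift, the sign conventions ($-q$ versus $q$), and the Euler pairing combine, so that the fiber factor is $f(q,t)$ in one position and $f(tq^{-1},t)$ in the other. Checking integrability and the finiteness needed for the integration map to be defined on these elements is a secondary technical point, but it runs parallel to \cite{T16} and should be routine.
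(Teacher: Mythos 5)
Your overall strategy --- lift Lemma~\ref{wallcrossing} to the motivic Hall algebra of $\CA$ and apply the integration map following \cite{T16} --- is indeed the paper's strategy, and your final identity rearranges to the correct statement. But the central step, the evaluation of the contribution of $\CC_{(-1,\infty)}[-1]$, is wrong in mechanism, and the error is not cosmetic. First, $1_{\CC_{(-1,\infty)}[-1]}$ is not an integrable element: semistable fiber sheaves have large automorphism groups, so the integration map cannot be applied to it directly, and its ``image'' is not the na\"ive Behrend-weighted count of semistable fiber sheaves that you describe. One must pass to the logarithm and invoke the no-poles theorem, which replaces the fiber factor by the exponential of \emph{weighted} generalized DT invariants, $\exp\big(\sum_{n/d\in(-1,\infty)}(-1)^{n-1}n\,\mathsf{N}_{n,dF}\,q^nt^d\big)$; the weight $n$ comes from the Poisson bracket (equivalently, from the adjoint action of $1_{\CC_{(-1,\infty)}[-1]}$ on the rank-one element), and it is exactly this weight --- positive for slopes in $(0,\infty)$, negative for slopes in $(-1,0)$ --- that produces the asymmetry. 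Second, your proposed source of asymmetry cannot work: the target of the Behrend-weighted integration map is commutative up to the sign $(-1)^{\chi(-,-)}$, so reordering factors can never effect the substitution $q\mapsto tq^{-1}$, which both negates the exponent of $q$ and shifts the exponent of $t$; taken literally, your argument would produce the same fiber factor on both sides and hence the false identity $\mathrm{PT}_H=\mathrm{PT}^{\pi}_H$.

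What the paper actually does is apply the integration map as in \cite[Thm 3.17]{T16} to obtain the single-factor identity \eqref{WALLCROSS}, in which the entire contribution of $\CC_{(-1,\infty)}[-1]$ multiplies $\mathrm{PT}^{\pi}_H$ once. It then uses Toda's evaluation $\mathsf{N}_{n,dF}=\sum_{k\mid(n,d)}(-e(X))/k^2$ and splits the slope range $(-1,\infty)$ into $(0,\infty)$ and $(-1,0)$: the positive-slope part gives $f(q,t)$, while the part with $n/d\in(-1,0)$ gives $f(q^{-1}t,t)^{-1}$ after the reindexing $n\mapsto -n$, $d\mapsto d-n$ (using $\gcd(n,d)=\gcd(n,d-n)$), the inverse coming from the sign of the weight $n$. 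So both the substitution $q\mapsto q^{-1}t$ and the inversion emerge from the explicit arithmetic of the $\mathsf{N}_{n,dF}$, not from any noncommutativity of the target ring. To repair your proof, replace the ``commuting the fiber factor past the pair element'' step with this computation.
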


\begin{proof}
Consider the
generalized Donaldson--Thomas invariant
\[
N_{n,\beta} \in \BQ
\]
counting semistable sheaves
of Chern character $(0,0, \beta, n)$, see \cite[3.6]{T16}.

By applying the integration map
of the motivic Hall algebra of $\CA$
to the identity in
Lemma \ref{wallcrossing} (see \cite[Thm 3.17]{T16} for details)
we obtain
\begin{equation} \label{WALLCROSS}
\PT_H(q,t)
=
\exp\bigg(
\sum_{ \substack{ d \geq 0, n \in \BZ \\ n/d \in (-1, \infty) }}
(-1)^{n-1} n \mathsf{N}_{n,dF} q^n t^d
\bigg) \cdot
\mathrm{PT}^{\pi}_H(q,t) \,.
\end{equation}
The first factor on the right-hand side is the contribution of 
$\CC_{(-1, \infty)}[-1]$.
By \cite[Thm 6.9]{T12} we have for all $d > 0$
\[ \mathsf{N}_{n,dF} = \sum_{k | (n,d) } \frac{-e(X)}{k^2} \,. \]
We find
\begin{multline*}
\exp\bigg(
\sum_{ \substack{ d > 0, n \in \BZ \\ n/d \in (0, \infty) }}
(-1)^{n-1} n \mathsf{N}_{n,dF} q^n t^d
\bigg)
=
\prod_{\ell, m \geq 1} (1- (-q)^{\ell} t^{m} )^{-\ell e(X)}
= f(q,t)
\end{multline*}
and
\[
\exp\bigg(
\sum_{ \substack{ d > 0, n \in \BZ \\ n/d \in (-1,0) }}
(-1)^{n-1} n \mathsf{N}_{n,dF} q^n t^d
\bigg) = f(q^{-1}t, t)^{-1} \,.
\]
Plugging into \eqref{WALLCROSS} the proof is complete.
\end{proof}

\section{The derived equivalence $\Phi_H$}
\label{Section_Derived_equivalence}
\subsection{Overview}
In this Section we use the derived equivalence $\Phi_H$
to prove the following Theorem.

\begin{thm}\label{FM0} Let $H \in \Pic(S)$
be effective and reduced. Then
\[
\mathsf{P}_{n, H+dF}
= \mathsf{P}^{\pi}_{-n-2h+2,\, H + (d+n+h-1)F}
\]
\end{thm}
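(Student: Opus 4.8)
The plan is to deduce the identity of invariants from an isomorphism of moduli spaces induced by the auto-equivalence $\Phi_H$, upgraded to an equality of Behrend-weighted counts. Concretely, I would prove the pointwise statement that a complex $I^{\bullet}$ with $\ch(I^{\bullet}) = (1,0,-(H+dF),-n)$ is a stable pair if and only if $\Phi_H(I^{\bullet})$ is a $\pi$-stable pair. Since $\Phi_H = \mathbb{D} \circ \mathbb{T}_{\pi^{\ast}\CO_S(H)} \circ \phi_{\CI_\Delta^{\ast}}$ is an auto-equivalence (each factor is, using Lemma~\ref{Lemma_phiP_autoequivalence} for $\phi_{\CP}$ and hence, after the twists relating $\CI_\Delta^{\ast}$ to $\CP$, for $\phi_{\CI_\Delta^{\ast}}$), this pointwise equivalence yields an isomorphism of moduli stacks $P_n(X,H+dF) \xrightarrow{\ \cong\ } \CP^{\pi}_{\tilde n}(X, H+\tilde d F)$. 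Because a derived equivalence identifies the local $d$-critical structure at corresponding points, it preserves the Behrend function; the two Behrend-weighted Euler characteristics therefore agree, which is exactly the asserted equality $\mathsf{P}_{n,H+dF} = \mathsf{P}^{\pi}_{-n-2h+2,\,H+(d+n+h-1)F}$.

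First I would pin down the numerical transformation. Using the cohomological Fourier--Mukai transform attached to $\CI_\Delta^{\ast}$, together with the explicit Poincar\'e sheaf $\CP = \CI_\Delta^{\ast} \otimes p^{\ast}\CO_X(-S_0)\otimes q^{\ast}\CO_X(-S_0)\otimes q^{\ast}\pi^{\ast}\omega_S$ and the computations of Lemma~\ref{Lemma_phiP} (the values of $\phi_{\CP}$ on $\CO_X$, $\iota_{\ast}\CM$, and $\CO_X(S_0)$), I would compute $\ch(\Phi_H(I^{\bullet}))$ in the numerical Grothendieck group. The outcome should be that $\Phi_H$ preserves the normalization $(1,0,\ast,\ast)$ and sends the pair $(d,n)$ to $(\tilde d,\tilde n)=(h+d+n-1,\,-n-2h+2)$, with the shift by $h = 1+\tfrac12(H^2+K_S\cdot H)$ produced by the twist $\mathbb{T}_{\pi^{\ast}\CO_S(H)}$; this is the elliptic exchange of fiber degree and Euler characteristic characteristic of the relative Fourier--Mukai transform.

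The heart of the argument is to transport the three defining conditions of Lemma~\ref{Lemma_Pairs_characterization} into the three defining conditions of a $\pi$-stable pair. The decisive structural fact is that $\Phi_H$ carries $0$-dimensional sheaves (up to the relevant shift and dual) onto \emph{$\pi$-torsion} sheaves, which by Proposition~\ref{Prop_Action_on_Semistables_in_CC} are precisely the extension closure of $\mu$-semistable fiber sheaves of slope $>-1$. I would establish this by analyzing $\phi_{\CP}$, and hence $\phi_{\CI_\Delta^{\ast}}$, on the category $\CC$ via Proposition~\ref{Prop_Action_on_Semistables_in_CC}, and then tracking the twist and the contravariant dual $\mathbb{D}$. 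Condition (1) (cohomology in degrees $0,1$) and the torsion-type part of condition (2) follow by combining the $t$-exactness of $\phi_{\CP}$ on $\CC$ with the behavior of $\mathbb{D}$ inside the tilt $\CA$ and its subcategory $\CB$ where $\pi$-stable pairs live; condition (3), the vanishing $\Hom(Q[-1],-)=0$, transforms under $\mathbb{D}$ into the corresponding vanishing against $\pi$-torsion test objects. Reducedness of $H$ enters exactly here: it guarantees that the horizontal (section) part of the support is controlled fiberwise, so that the transform of a pure one-dimensional sheaf again satisfies purity and the $\pi$-torsion condition rather than acquiring spurious lower-dimensional pieces; for non-reduced $H$ this control fails and $\Phi_H(I^{\bullet})$ need not be a $\pi$-stable pair.

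The main obstacle I anticipate is the bookkeeping of conditions (2) and (3) across the contravariant duality $\mathbb{D}$, namely showing that torsion-freeness of $h^0$ and the $\Hom$-vanishing are genuinely preserved and not merely the numerical data. The subtle point is to verify that $\Phi_H$ exchanges $0$-dimensional sheaves with $\pi$-torsion sheaves \emph{on the nose}, compatibly with the abelian subcategory $\CB \subset \CA$, so that a stable pair's absence of a destabilizing $0$-dimensional quotient becomes a $\pi$-stable pair's absence of a destabilizing $\pi$-torsion quotient. Once this correspondence of test subcategories is in place, the remaining checks are formal, and the Behrend-function compatibility then promotes the isomorphism of moduli spaces to the claimed identity of enumerative invariants.
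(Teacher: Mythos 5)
Your overall architecture coincides with the paper's (pointwise equivalence under $\Phi_H$, Chern character bookkeeping, then equality of Behrend-weighted counts), but the ``decisive structural fact'' on which you hang the transport of the stability conditions is false, and this is precisely where the content lies. $\Phi_H$ does \emph{not} carry $0$-dimensional sheaves onto $\pi$-torsion sheaves: by Lemma~\ref{Lemma_PHI_H_examples}, $\Phi_H(\BC_x)=\imath_{s\ast}\Fm_x[-2]$, and $\imath_{s\ast}\Fm_x$ is a fiber sheaf of slope exactly $-1$, which is excluded from the $\pi$-torsion category $\CC_{(-1,\infty]}$; more generally Lemma~\ref{phi_H_CC} sends $\CC_{\{\infty\}}$ to $\CC_{\{-1\}}[-2]$ and $\CC_{(-1,\infty]}$ to $\CC_{[-1,\infty)}[-2]$. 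Consequently condition (3) for $I^{\bullet}$ does not transform into condition (3) for $\widetilde{I^{\bullet}}$, as you assert. What actually happens (Lemma~\ref{Lemma_Purity}) is that the vanishing $\Hom(Q[-1],I^{\bullet})=0$ for $Q\in\CC_{(a,\infty]}$ converts, via $\Hom(Q[-1],I^{\bullet})\cong\Hom(h^1(\widetilde{I^{\bullet}}),G)$ for $G$ ranging over $\CC_{[-1,-a/(1+a))}$, into a slope condition on the Harder--Narasimhan factors of $h^1(\widetilde{I^{\bullet}})$: the conditions (2) and (3) are \emph{swapped} by $\Phi_H$ (``$h^1$ is $0$-dimensional'' corresponds to ``no Homs from $\pi$-torsion $Q[-1]$'', and conversely). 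A proof following your stated correspondence would be attempting to verify an equivalence that does not hold.

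You also misplace where reducedness of $H$ enters. It is not needed to keep the one-dimensional data pure or $\pi$-torsion; it is needed for condition (c), namely that $h^0(\widetilde{I^{\bullet}})$ is torsion-free, i.e.\ that $h^0(\widetilde{\CI_C})$ is again an ideal sheaf. The danger is that $h^0(\widetilde{\CI_C})$ acquires a \emph{two-dimensional} torsion subsheaf supported on the surface $\pi^{-1}(D)$ with $D=\pi(C)$ --- the paper's non-reduced example on an elliptic K3 times $E$ shows this genuinely occurs. Excluding it requires $D\in|H|$ (this is where reducedness of $H$ is used) together with the fact that $\widetilde{\CI_{C/W}}$, $W=\pi^{-1}(D)$, is a sheaf concentrated in degree $1$ (Lemma~\ref{sublemma1}), which in turn needs $C\to D$ to be generically an isomorphism on every component --- again a consequence of $H$ reduced. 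Your sketch contains no mechanism for this step, nor for the reduction chain (to Cohen--Macaulay curves, then to curves without fiber components via Lemma~\ref{sublemma3}, with the kernel analysis $K\in\CC_{[0,\infty)}$) that is needed even to show $\widetilde{I^{\bullet}}$ is two-term. These are the substantive parts of Proposition~\ref{MainProp} and cannot be absorbed into ``the remaining checks are formal.''
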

\vspace{5pt}

We immediately deduce Theorem \ref{Theorem1} as a corollary.
\begin{proof}[Proof of Theorem \ref{Theorem1}]
Rewriting Theorem~\ref{FM0} in generating series we find
\[ \mathrm{PT}_H(q,t)
= t^{h-1} q^{-2(h-1)} \mathrm{PT}^{\pi}_H(q^{-1}t,t) \,. \]
By applying Proposition \ref{PropWallCross}
to the last term yields
\[
\mathrm{PT}_H(q,t)
= t^{h-1} q^{-2(h-1)}\frac{f(q,t)}{f(q^{-1}t,t)}\cdot \mathrm{PT}_H(q^{-1}t,t) \,.
\]
By \eqref{Toda_result} we have
\[
\frac{f(q,t)}{f(q^{-1}t,t)} = \frac{\mathrm{PT}_0(q,t)}{\mathrm{PT}_0(q^{-1}t,t)}
\]
which completes the proof.
\end{proof}

\subsection{Basic properties of $\Phi_H$}
Let $H \in \mathrm{Pic}(S)$ be an effective class.
\begin{defn} \label{defnn4} Define the autoequivalence
\begin{equation*}
\Phi_H = \mathbb{D}\circ \mathbb{T}_{\pi^\ast \CO_S(H-K_S)} \circ \mathbb{T}_{\CO_X(S_0)}\circ \phi_\mathcal{P} \circ \mathbb{T}_{\CO_X(S_0)}.
\end{equation*} 
For an element $\CE \in D^b\mathrm{Coh}(X)$ we write 
\[ \widetilde{\CE} = \Phi_H(\CE) \,. \]
\end{defn}

Definition~\ref{defnn4} agrees with
the definition of $\Phi_H$ in Section~\ref{Section_Intro_ProofThm1},
but is more convenient for computations since we work
with the normalized Poincar\'e sheaf $\CP$.
We have several basic Lemmas.

\begin{lemma}
For all $\CE, \CF \in D^b \Coh(X)$
\begin{enumerate}
\item[(1)] $\widetilde{\CE[i]} = \widetilde{\CE}[-i]$.
\item[(2)] $\mathrm{Hom}(\CE, \CF) \cong \mathrm{Hom}(\widetilde{\CF}, \widetilde{\CE})$.
\end{enumerate}
\end{lemma}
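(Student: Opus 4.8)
The plan is to exploit the fact that $\Phi_H$ is built as a composition of four \emph{covariant} equivalences followed by the single \emph{contravariant} duality functor $\mathbb{D}$. First I would abbreviate
\[ \Psi = \mathbb{T}_{\pi^\ast\CO_S(H-K_S)} \circ \mathbb{T}_{\CO_X(S_0)} \circ \phi_{\CP} \circ \mathbb{T}_{\CO_X(S_0)} \]
so that $\Phi_H = \mathbb{D}\circ\Psi$. By Lemma~\ref{Lemma_phiP_autoequivalence}(1) the Fourier--Mukai transform $\phi_{\CP}$ is an auto-equivalence, and tensoring by a line bundle is always an auto-equivalence; hence $\Psi$ is an auto-equivalence of $D^b\Coh(X)$. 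Since $X$ is smooth and projective, $\mathbb{D} = R\hom_X(-,\CO_X)$ is an anti-auto-equivalence, i.e.\ an equivalence $D^b\Coh(X)^{\mathrm{op}} \xrightarrow{\sim} D^b\Coh(X)$ with $\mathbb{D}\circ\mathbb{D} \simeq \Id$.

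For part (1) I would commute the shift functor $[i]$ past each factor in turn. Tensoring by a fixed complex commutes with shifts, and $\phi_{\CP}(\CE) = Rq_\ast(p^\ast\CE \otimes \CP)$ is a composite of triangulated functors ($p^\ast$, $-\otimes\CP$, $Rq_\ast$), so $\phi_{\CP}(\CE[i]) \simeq \phi_{\CP}(\CE)[i]$; therefore $\Psi(\CE[i]) \simeq \Psi(\CE)[i]$. The duality functor reverses the shift, $\mathbb{D}(\CG[i]) = R\hom(\CG[i],\CO_X) \simeq \mathbb{D}(\CG)[-i]$, so applying $\mathbb{D}$ to $\Psi(\CE)[i]$ yields $\widetilde{\CE[i]} = \Phi_H(\CE[i]) \simeq \mathbb{D}(\Psi(\CE))[-i] = \widetilde{\CE}[-i]$.

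For part (2) I would combine the two halves. Because $\Psi$ is an equivalence it induces an isomorphism $\Hom(\CE,\CF) \cong \Hom(\Psi(\CE),\Psi(\CF))$, and because $\mathbb{D}$ is an anti-equivalence it induces an isomorphism $\Hom(\Psi(\CE),\Psi(\CF)) \cong \Hom(\mathbb{D}\Psi(\CF), \mathbb{D}\Psi(\CE)) = \Hom(\widetilde{\CF},\widetilde{\CE})$. Composing the two gives the claimed isomorphism.

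There is no serious obstacle here: both statements are formal consequences of the structure of $\Phi_H$. The only point that requires care---indeed the conceptual heart of the lemma---is the single contravariance contributed by $\mathbb{D}$, which is responsible both for the sign reversal of the shift in (1) and for the swapping of the two arguments of $\Hom$ in (2). I would make sure to invoke the smoothness of $X$ when asserting that $\mathbb{D}$ is an anti-auto-equivalence (rather than merely an additive functor), since this is exactly what guarantees that it preserves $\Hom$-groups up to reversal of arrows.
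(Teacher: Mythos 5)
Your proof is correct and is exactly the formal argument the paper has in mind (the paper states this lemma without proof, treating it as immediate from the definition of $\Phi_H$ as a composition of covariant equivalences with the single contravariant functor $\mathbb{D}$). Splitting off $\Psi$ so that $\Phi_H=\mathbb{D}\circ\Psi$, using that $\Psi$ is a shift-commuting equivalence, and letting $\mathbb{D}$ account for both the sign reversal in (1) and the reversal of arrows in (2) --- with smoothness of $X$ guaranteeing $\mathbb{D}$ is a contravariant equivalence with $\mathbb{D}\circ\mathbb{D}\simeq\Id$ --- is precisely the intended reasoning.
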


We show $\Phi_H$ acts on $D^b\mathrm{Coh}(X)$ as an involution.
\begin{lem} \label{inv}
$\Phi_H \circ \Phi_H = \mathrm{id}_{D^b\mathrm{Coh}(X)}$. \end{lem}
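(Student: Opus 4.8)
The plan is to show that $\Phi_H \circ \Phi_H$ is the identity by reducing it to the structural properties of the Fourier--Mukai transform $\phi_{\CP}$ established in Lemma~\ref{Lemma_phiP_autoequivalence}, together with the standard interactions between the dual functor $\BD$, the twist functors $\BT_{(-)}$, and $\phi_{\CP}$. The key input is part~(2) of Lemma~\ref{Lemma_phiP_autoequivalence}, namely $\phi_{\CP} \circ \phi_{\CP} = \mathrm{inv}^{\ast} \circ \BT_{\pi^{\ast}\omega_S}[-1]$, and part~(3), namely $\BD \circ \phi_{\CP} = \phi_{\CP}^{-1} \circ \BD$. Since $\Phi_H$ is a fixed composite of these functors, the strategy is purely formal: expand $\Phi_H \circ \Phi_H$ into a string of functors and simplify using commutation relations until everything cancels.

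First I would record the elementary commutation rules I need. The dual functor satisfies $\BD \circ \BT_{\CL} = \BT_{\CL^{\vee}} \circ \BD$ for a line bundle $\CL$, and $\BD \circ \BD = \mathrm{id}$. The twist functors commute with one another, and since tensoring by a line bundle pulled back from $S$ commutes with $\phi_{\CP}$, one has $\phi_{\CP} \circ \BT_{\pi^{\ast}\CM} = \BT_{\pi^{\ast}\CM} \circ \phi_{\CP}$ for line bundles $\CM$ on $S$. I would also note how $\mathrm{inv}^{\ast}$ interacts with the other functors: the involution $\mathrm{inv}$ fixes the section and the fibers, so $\mathrm{inv}^{\ast}$ commutes with $\BT_{\pi^{\ast}\CM}$ and with $\BD$, and $(\mathrm{inv}^{\ast})^2 = \mathrm{id}$.

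Next, writing $\Phi_H = \BD \circ \BT_{\pi^{\ast}\CO_S(H - K_S)} \circ \BT_{\CO_X(S_0)} \circ \phi_{\CP} \circ \BT_{\CO_X(S_0)}$, I would compute $\Phi_H \circ \Phi_H$ by inserting the first $\BD$ from the left copy next to the last $\BT_{\CO_X(S_0)}$ of the right copy, and push the $\BD$ through using $\BD \circ \phi_{\CP} = \phi_{\CP}^{-1} \circ \BD$ together with $\BD \circ \BT_{\CL} = \BT_{\CL^{\vee}} \circ \BD$. Carrying the two $\BD$'s toward each other so they annihilate, one is left with a composite of twist functors, two copies of $\phi_{\CP}$ (one of which has become $\phi_{\CP}^{-1}$ after passing a $\BD$, so these may combine favorably), and the factors $\mathrm{inv}^{\ast}$, $\BT_{\pi^{\ast}\omega_S}$, and shifts coming from Lemma~\ref{Lemma_phiP_autoequivalence}(2). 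The Calabi--Yau normalization ($\omega_X \simeq \CO_X$) and the explicit form of the Poincar\'e sheaf should make the accumulated line-bundle twists telescope to $\CO_X$. The point of including the factor $\BT_{\pi^{\ast}\CO_S(H - K_S)}$ and the two copies of $\BT_{\CO_X(S_0)}$ in the definition is precisely to arrange this cancellation, so I expect all the twists and shifts to collapse to the identity.

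The main obstacle will be bookkeeping: correctly tracking the line-bundle twists and the cohomological shift $[-1]$ as the two $\BD$'s are commuted past $\phi_{\CP}$ and the twist functors, and verifying that the composite of $\mathrm{inv}^{\ast}$ with the surviving twist reduces to $\mathrm{id}$. In particular I would double-check that the $\pi^{\ast}\omega_S$ produced by $\phi_{\CP}\circ\phi_{\CP}$ is exactly compensated by the $\pi^{\ast}\CO_S(-K_S) = \pi^{\ast}\omega_S^{\vee}$ appearing (after dualization) from the $H - K_S$ twist, and that the two $S_0$-twists combine with $\mathrm{inv}^{\ast}$ and $\BD$ to cancel rather than accumulate. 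Once the functorial identities are assembled in the right order this is a finite and forced computation, with no geometric obstruction beyond careful sign and degree tracking.
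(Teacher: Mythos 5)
Your proposal is correct and follows essentially the same route as the paper: expand $\Phi_H \circ \Phi_H$, move the inner $\BD$ past $\phi_{\CP}$ via $\BD \circ \phi_{\CP} = \phi_{\CP}^{-1} \circ \BD$ and past the twists via $\BD \circ \BT_{\CL} = \BT_{\CL^{\vee}} \circ \BD$, and let the line-bundle twists telescope using that $\phi_{\CP}$ commutes with twists by pullbacks from $S$. One small remark: part (2) of Lemma~\ref{Lemma_phiP_autoequivalence} and the properties of $\mathrm{inv}^{\ast}$ are never needed, since the two copies of $\phi_{\CP}$ cancel directly as $\phi_{\CP} \circ \phi_{\CP}^{-1}$, so no shift or involution ever enters the computation.
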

\begin{proof}
By Lemma~\ref{Lemma_phiP_autoequivalence} we have
$\mathbb{D}\circ \phi_\mathcal{P} = \phi_\mathcal{P}^{-1} \circ \mathbb{D}$.
Hence, with 
\[ \mathcal{L}: = O_X(S_0+\pi^\ast H - \pi^\ast K_S) \]
and since tensoring with a line bundle
pulled back from $S$ commutes with $\phi_{\CP}$ we have
\[
\Phi_H ^2 =
(\mathbb{D} \circ \mathbb{T}_{O_X(S_0)} \circ \phi_{\mathcal{P}} \circ \mathbb{T}_{\mathcal{L}}) \circ (\mathbb{D}\circ \mathbb{T}_{\mathcal{L}} \circ \phi_{\mathcal{P}} \circ \mathbb{T}_{O_X(S_0)})
= \mathrm{id}_{D^b\mathrm{Coh}(X)}. \qedhere
\]
\end{proof}

\begin{lemma} \hfill \label{Lemma_PHI_H_examples}
\begin{enumerate}
\item $\widetilde{\CO_X} = \CO_X(-\pi^{\ast} H)$
\item Let $x \in X$ be a point, let
$\imath_s : X_s \hookrightarrow X$ be the inclusion, and let
$\Fm_x$ be the ideal sheaf of $x$ in $X_s$. Then
\[ \widetilde{\BC_x} = \imath_{s \ast} \Fm_x[-2] \,. \]
\item For all $n > 1$ we have
\[ \Phi_H( \CO_X(-n S_0) ) = \CV_n[1] \]
for a locally free sheaf $\CV_n$ of rank $n-1$.
\end{enumerate}
\end{lemma}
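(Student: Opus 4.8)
The plan is to evaluate $\Phi_H$ on each of the three inputs by peeling off the five functors of Definition~\ref{defnn4} from the inside out, using Lemma~\ref{Lemma_phiP} for the Fourier--Mukai step and the projection formula to push the twists past $\imath_{s*}$. Parts (1) and (3) are direct; part (2) carries the real content.

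For (1), the inner twist gives $\CO_X(S_0)$, Lemma~\ref{Lemma_phiP}(3) gives $\phi_\CP(\CO_X(S_0)) = \pi^*\omega_S(-S_0)$, the middle twist clears $-S_0$ to leave $\pi^*\omega_S = \CO_X(\pi^* K_S)$, the outer twist by $\pi^*\CO_S(H-K_S)$ produces $\CO_X(\pi^* H)$, and $\mathbb{D}$ inverts this line bundle to $\CO_X(-\pi^* H)$. For (3), the inner twist sends $\CO_X(-nS_0)$ to $\CO_X(-(n-1)S_0)$; as $n-1 > 0$, Lemma~\ref{Lemma_phiP}(4) gives $\phi_\CP(\CO_X(-(n-1)S_0)) = \CW[-1]$ with $\CW$ locally free of rank $n-1$. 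The remaining two twists preserve local freeness and rank while keeping the shift $[-1]$, and tracking $\mathbb{D}(\CW'[-1]) = (\CW')^*[1]$ yields $\CV_n[1]$ with $\CV_n := (\CW')^*$ locally free of rank $n-1$.

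For (2), I would first compute $\phi_\CP(\BC_x)$ by restricting the kernel to $\{x\} \times X_s$: along this fiber $\CI_\Delta^*$ contributes $\Fm_x^*$, the factor $q^*\CO_X(-S_0)$ contributes $\CO_{X_s}(-0_s)$, and the factors pulled back from $X$ at $x$ or from $S$ restrict to trivial line bundles; since $q$ restricts to an isomorphism $\{x\}\times X_s \xrightarrow{\sim} X_s$ and $p$ is flat, there is no shift and $\phi_\CP(\BC_x) = \imath_{s*}(\Fm_x^* \otimes \CO_{X_s}(-0_s))$, in agreement with the identification $X \cong \hat X$. The two twists then cancel $\CO_{X_s}(-0_s)$ (via $\CO_X(S_0)|_{X_s} = \CO_{X_s}(0_s)$) and act trivially on the fiber, so the task reduces to $\mathbb{D}(\imath_{s*}\Fm_x^*)$. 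Here I would invoke Grothendieck--Verdier duality for the codimension-$2$ local complete intersection embedding $\imath_s$: using $\imath_s^!\CO_X = \omega_{X_s}[-2]$ together with $\omega_{X_s} \cong \CO_{X_s}$ (the fiber is an integral Gorenstein curve of arithmetic genus $1$), one gets $\mathbb{D}(\imath_{s*}\Fm_x^*) = \imath_{s*} R\hom_{X_s}(\Fm_x^*, \CO_{X_s})[-2]$.

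The main obstacle is the double-dual identity $R\hom_{X_s}(\Fm_x^*, \CO_{X_s}) = \Fm_x$ needed to finish. I would justify it by observing that $\Fm_x$ is a torsion-free rank-one sheaf on the Gorenstein curve $X_s$, hence maximal Cohen--Macaulay and therefore reflexive, which simultaneously gives the vanishing $\Ext^{>0}_{X_s}(\Fm_x^*, \CO_{X_s}) = 0$ and the isomorphism $\Fm_x^{**} = \Fm_x$. The delicate case is when $x$ is the node or cusp of a singular fiber, where $\Fm_x$ fails to be locally free; it is precisely the Gorenstein property of the fibers that guarantees reflexivity there. Combining the duality computation with this identity gives $\widetilde{\BC_x} = \imath_{s*}\Fm_x[-2]$.
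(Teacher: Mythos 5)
Your proposal is correct and follows essentially the same route as the paper: parts (1) and (3) are read off from Lemma~\ref{Lemma_phiP} and the definition, and for part (2) the first four functors are computed to give $\imath_{s\ast}\Fm_x^{\ast}$, after which one applies relative Grothendieck--Verdier duality for $\imath_s$ (with $\omega_{X_s}\cong\CO_{X_s}$) and the reflexivity of $\Fm_x$ on the Gorenstein fiber. Your justification of the double-dual identity via the maximal Cohen--Macaulay property is exactly the content behind the paper's brief appeal to reflexivity.
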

\begin{proof}
(1) and (3) follow directly from Lemma \ref{Lemma_phiP} and the definition.
For (2) we have
\[
\mathbb{T}_{\CO_X( \pi^{\ast} (H- K_S) + S_0)}\circ \phi_\mathcal{P} \circ \mathbb{T}_{\CO_X(S_0)}( \BC_x )
= \imath_{s \ast} \Fm_x^{\ast} \,.
\]
Hence using relative duality and since $\Fm_x$ is reflexive on $X_s$ we get
\[ \widetilde{\BC_x} = \BD( \imath_{s \ast} \Fm_x^{\ast} )
= \imath_{s \ast}( \Fm_{x}^{\ast \vee} )[-2]
= \imath_{s \ast}( \Fm_x )[-2] \,. \qedhere
\]
\end{proof}

We prove $\Phi_H$ acts as expected on the level of cohomology.

\begin{lemma} \label{Lemma_Chern_character}
Assume $\CE \in D^b \Coh(X)$ has Chern character
\[ \ch( \mathcal{E} ) = (1,0, -(H+dF) , -n), \]
in $H^{2\ast}(X,\BQ) = \sum_i H^{2i}(X,\BQ)$, then
\[
\mathrm{ch}(\Phi_H(\mathcal{E})) = (1, 0 , -(H+(n+d+h-1)F), n+2h-2).
\]
\end{lemma}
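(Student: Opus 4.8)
The plan is to use that $\Phi_H$ is exact and hence descends to a $\mathbb{Q}$-linear endomorphism $\phi_H$ of $H^{2\ast}(X,\mathbb{Q})$; since $\Phi_H$ is an equivalence fixing the zero object, $\ch(\Phi_H(\CE))$ depends $\mathbb{Q}$-linearly on $\ch(\CE)$. It therefore suffices to evaluate $\phi_H$ on the four classes appearing in $\ch(\CE)=(1,0,-(H+dF),-n)$, namely the fundamental class $\mathbf 1$, the section curve class $\iota_\ast H$, the fiber class $F$, and the point class $\pt$, and to assemble the result by linearity. Crucially, $\phi_H$ does \emph{not} preserve the cohomological grading: the Fourier--Mukai factor $\phi_\CP$ mixes degrees along the fibers, and it is exactly this mixing that produces the coupling between the fiber coefficient and the Euler characteristic in the statement.

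Writing $\Phi_H=\mathbb{D}\circ\mathbb{T}_\CL\circ\phi_\CP\circ\mathbb{T}_{\CO_X(S_0)}$ with $\CL=\CO_X(S_0+\pi^\ast(H-K_S))$ as in Definition~\ref{defnn4}, I would record the cohomological action of each factor. The twists act by multiplication with $e^{S_0}$ and $e^{c_1(\CL)}$, and the dual $\mathbb{D}$ acts by the sign involution $\ch_i\mapsto(-1)^i\ch_i$; all of these are elementary once one uses $(\pi^\ast H)^2=(H^2)F$, $(\pi^\ast H)^3=0$, $F\cdot S_0=\pt$ and $F\cdot\pi^\ast(\text{divisor})=0$. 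The only nontrivial factor is $\psi:=(\phi_\CP)_\ast$, which I would pin down on the relevant classes from Lemma~\ref{Lemma_phiP} together with Grothendieck--Riemann--Roch for the section embedding $\iota$, using the normal bundle $N_{S_0/X}=\omega_S$. Concretely: $\phi_\CP(\CO_X)=\iota_\ast\omega_S[-1]$ gives $\psi(\mathbf 1)$; the slope computation of Proposition~\ref{Prop_Action_on_Semistables_in_CC} applied to $\CO_{X_s}$ and to $\mathbb{C}_x$ yields $\psi(F)=-\pt$ and $\psi(\pt)=F$; and the family of identities $\phi_\CP(\iota_\ast\CM)=\pi^\ast\CM$ for $\CM\in\Pic(S)$, expanded via GRR, determines $\psi$ on $S_0$ and on the section curve classes $\iota_\ast C$.

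With $\psi$ in hand I would compute $\phi_H$ on each test class by composing the four factors. One checks directly $\phi_H(\mathbf 1)=\ch(\CO_X(-\pi^\ast H))=(1,-\pi^\ast H,\tfrac12 H^2 F,0)$, matching Lemma~\ref{Lemma_PHI_H_examples}(1); $\phi_H(\pt)=F-\pt$, matching Lemma~\ref{Lemma_PHI_H_examples}(2); and a short calculation gives $\phi_H(F)=F$. The decisive computation is $\phi_H(\iota_\ast H)$: here the GRR contributions organize into $\ch_2=\iota_\ast H+(H^2+\tfrac12 K_S\cdot H)F$ and $\ch_3=-(H^2+K_S\cdot H)\pt$, together with $c_1=-\pi^\ast H$. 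Finally, writing $\ch(\CE)=\mathbf 1-\iota_\ast H-dF-n\,\pt$ and applying $\phi_H$ termwise, the $c_1$-contributions $-\pi^\ast H$ cancel, and using $H^2+K_S\cdot H=2(h-1)$ the fiber and point contributions collapse exactly to $-(H+(n+d+h-1)F)$ and $(n+2h-2)\pt$.

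The main obstacle is the bookkeeping in the last step: one must track the degree-mixing of $\psi$ carefully and verify that the various intersection numbers on $S$ produced by Grothendieck--Riemann--Roch---precisely $H^2$ and $K_S\cdot H$---recombine into the single arithmetic genus $h=1+\tfrac12(H^2+K_S\cdot H)$. A cleaner but more laborious alternative would be to compute $(\phi_\CP)_\ast$ in one stroke by GRR from the explicit Poincar\'e sheaf $\CP=\CI_\Delta^\ast\otimes p^\ast\CO_X(-S_0)\otimes q^\ast\CO_X(-S_0)\otimes q^\ast\pi^\ast\omega_S$, using $\omega_{X\times_S X}=p^\ast\pi^\ast\omega_S^\vee$; this trades the test-object bookkeeping for the computation of the Chern character of the diagonal, which I would avoid.
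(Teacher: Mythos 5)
Your proposal is correct and follows essentially the same route as the paper: the paper's proof consists precisely of invoking linearity and evaluating $\Phi_H$ on the test objects $\CO_X$, $\CO_{X_s}$, $\BC_x$, and $\CO_{\iota(B)}$ for $B\in|H|$ (whose Chern characters span the classes $\mathbf 1$, $F$, $\pt$, $\iota_*H$ you use), with the direct Grothendieck--Riemann--Roch computation from the kernel as the stated alternative. Your intermediate values, in particular $\phi_H(\iota_*H)=-\pi^*H+\iota_*H+(H^2+\tfrac12 K_S\cdot H)F-(H^2+K_S\cdot H)\pt$, check out and assemble to the claimed formula.
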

\begin{proof} By a direct calculation as in \cite[5.2]{AHR},
or by calculating
$\widetilde{\CO_X}, \widetilde{\CO_{X_s}}, \widetilde{\BC_x}$, and $\widetilde{\CO_{\iota({B})}}$ with $B \in |H|$ directly as in Lemma \ref{Lemma_PHI_H_examples}.
\end{proof}

We consider the action of $\Phi_H$
on the category $\CC$ of $1$-dimensional
sheaves supported on fibers of $\pi$.
\begin{lemma}\label{phi_H_CC}
Let $A \in \CC$ be a semistable sheaf and let $\mu_0 = \mu(A)$ be its slope.
With the conventions $1/0 = \infty$ and $\infty/\infty = 1$
we have:
\begin{itemize}
\item If $\mu_0 \in [-1, \infty]$,
then $\widetilde{A} = G[-2]$ for a semistable sheaf $G \in \CC$ of
slope $-\mu_0/(1+\mu_0) \geq -1$.
\item If $\mu_0 \in (-\infty, -1)$, then
$\widetilde{A} = G[-1]$ for a semistable sheaf
$G \in \CC$ of slope~$-\mu_0/(1+\mu_0) < -1$.
\end{itemize}
In particular, for an interval $[a,b] \subset [-1, \infty]$ we have
\[ \Phi_H( \CC_{[a,b]} ) = \CC_{[-b/(1+b), -a/(1+a)]}[-2] \]
and likewise for open or half-open intervals in $[-1, \infty]$.
\end{lemma}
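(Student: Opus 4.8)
The functor $\Phi_H$ is built out of $\phi_{\CP}$ together with twists by line bundles pulled back from $S$ and the duality functor $\BD$. The key observation is that for a sheaf $A \in \CC$ supported on fibers, tensoring by any line bundle $\pi^\ast L$ from the base does not change its numerical slope $\mu(A) = \chi(A)/(\ch_2(A)\cdot\lambda)$, since $\pi^\ast L$ restricts trivially (degree $0$) on each fiber; only the twist by $\CO_X(S_0)$, which restricts to a degree-one line bundle on each fiber, shifts the Euler characteristic. So first I would track carefully how $\mu$ transforms under the three operations composing $\Phi_H$ on objects of $\CC$.

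\textbf{Step 1: the twist $\BT_{\CO_X(S_0)}$.} For $A \in \CC$ with support class $\ch_2(A) = \beta$ (a sum of fiber classes, so $\beta \cdot \lambda = \beta \cdot S_0 = $ the total fiber multiplicity, call it $d$), the twist $A \mapsto A(S_0)$ changes $\chi$ by $\chi(A(S_0)) - \chi(A) = \ch_2(A)\cdot S_0 = d$ by Riemann--Roch (all higher terms vanish on a $1$-dimensional sheaf). Hence $\mu(A(S_0)) = (\chi(A)+d)/d = \mu(A) + 1$. I would record that the innermost twist sends slope $\mu_0$ to $\mu_0 + 1$.

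\textbf{Step 2: apply $\phi_{\CP}$, then the remaining twists and $\BD$.} After the shift in Step~1 we are looking at a semistable sheaf of slope $\mu_0 + 1$, and I apply Proposition~\ref{Prop_Action_on_Semistables_in_CC}. If $\mu_0 \in [-1,\infty]$ then $\mu_0 + 1 \in [0,\infty]$, which is the regime where $\phi_{\CP}$ produces a genuine sheaf (degree $0$) of slope $-1/(\mu_0+1)$; if $\mu_0 \in (-\infty,-1)$ then $\mu_0 + 1 \in (-\infty,0)$ and $\phi_{\CP}$ produces a sheaf shifted by $[-1]$, again of slope $-1/(\mu_0+1)$. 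Next, the outer twist by $\BT_{\pi^\ast\CO_S(H-K_S)}\circ\BT_{\CO_X(S_0)}$ does not affect membership in $\CC$ nor the semistability, but it shifts the slope: the $\pi^\ast$ twist does nothing to $\mu$, and the $\CO_X(S_0)$ twist adds $1$ as in Step~1, giving slope $-1/(\mu_0+1) + 1 = \mu_0/(\mu_0+1)$. Finally $\BD$ is a duality: on a pure $1$-dimensional fiber sheaf $G$ one has $\BD(G) = G^{\vee} = \ext^2(G,\CO_X)[-2]$, which negates $\ch_2$ and $\chi$, hence negates the slope and contributes a further homological shift by $[-2]$. Composing, the slope becomes $-\mu_0/(\mu_0+1)$ and the homological shifts accumulate to $[-2]$ in the first case and $[-1]$ in the second, matching the statement. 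I would verify the slope bound $-\mu_0/(1+\mu_0) \geq -1$ (resp. $<-1$) by the elementary monotonicity of $x \mapsto -x/(1+x)$ on the relevant intervals, using the sign conventions $1/0 = \infty$, $\infty/\infty = 1$.

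\textbf{Step 3: the interval statement.} Since $\Phi_H$ sends semistables of a fixed slope to semistables of a fixed slope (with a uniform homological shift $[-2]$ on all of $[-1,\infty]$), and since $\Phi_H$ is an exact equivalence preserving extensions, it carries the extension-closure $\CC_{[a,b]}$ to the extension-closure of the image semistables, shifted by $[-2]$. The image slopes range over $\{-\mu/(1+\mu) : \mu \in [a,b]\}$; because $\mu \mapsto -\mu/(1+\mu)$ is \emph{order-reversing} on $[-1,\infty]$ (its derivative is $-1/(1+\mu)^2 < 0$), the endpoints swap, giving the interval $[-b/(1+b),\, -a/(1+a)]$. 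The same argument, verbatim, handles open and half-open intervals. \textbf{The main obstacle} is purely bookkeeping: keeping the homological shifts and the sign of $\BD$ straight through four composed functors, and confirming that $\BD$ of a shifted sheaf $T[-1]$ (the second case of Proposition~\ref{Prop_Action_on_Semistables_in_CC}) lands with the correct single shift $[-1]$ rather than $[-2]$ — this is exactly the compensation $[i]\mapsto[-i]$ recorded in the first basic Lemma, $\widetilde{\CE[i]} = \widetilde{\CE}[-i]$, and I would invoke it to reconcile the two cases cleanly.
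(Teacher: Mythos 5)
Your approach is the same as the paper's: the paper's proof is a one\nobreakdash-line reduction to Proposition~\ref{Prop_Action_on_Semistables_in_CC} together with the fact that $\BD$ carries a semistable fiber sheaf of finite slope $a$ to $T[-2]$ with $T$ semistable of slope $-a$, and your more detailed bookkeeping of the twists (slope $\mapsto$ slope $+1$ under $\BT_{\CO_X(S_0)}$, no change under $\pi^\ast$-twists) is correct and fills in exactly what the paper leaves implicit. The interval statement in Step 3 is also handled correctly.

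There is, however, one boundary point where your Step 2 as written breaks down: $\mu_0=-1$. You assert that $\mu_0+1\in[0,\infty]$ is ``the regime where $\phi_{\CP}$ produces a genuine sheaf,'' but Proposition~\ref{Prop_Action_on_Semistables_in_CC} places slope $0$ in its \emph{second} case: $\phi_{\CP}(A(S_0))=T[-1]$ with $T$ of slope $-1/0=\infty$, i.e.\ $T$ is $0$-dimensional. Your subsequent formula $\BD(G)=\ext^2(G,\CO_X)[-2]$ is then false for this $G$: for a $0$-dimensional sheaf $\ext^2(G,\CO_X)=0$ and instead $\BD(G)=\ext^3(G,\CO_X)[-3]$. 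Followed literally, your computation at $\mu_0=-1$ would return either $0$ or a $[-3]$ shift. The lemma is still true there because the two discrepancies cancel --- the extra $[-1]$ from the second case of Proposition~\ref{Prop_Action_on_Semistables_in_CC} compensates the $[-3]$ versus $[-2]$ shift of $\BD$ on $0$-dimensional sheaves, yielding $G[-2]$ of slope $\infty=-\mu_0/(1+\mu_0)$ as claimed --- but this needs to be said explicitly; note that the paper's auxiliary duality statement is deliberately restricted to slopes $a<\infty$ for this reason. The point is not cosmetic, since the lemma is later applied to the closed interval $[-1,\infty]$ (e.g.\ for $K_{+}\in\CC_{[-1,\infty]}$ in Step~4 of the proof of Proposition~\ref{MainProp}), so the endpoint $\mu_0=-1$ genuinely occurs. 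The analogous endpoint $\mu_0=\infty$ you do handle correctly, since there the $0$-dimensional object is the input rather than an intermediate output.
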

\begin{proof}
This follows from Proposition \ref{Prop_Action_on_Semistables_in_CC}
and the following:
If $A \in \CC$ is semistable of slope $a <\infty$, then
$\BD(A) = T[-2]$ for a semistable sheaf $T \in \CC$ of slope $-a$.
\end{proof}

\begin{lemma}\label{sublemma3}
Let $\CI_C$ be the ideal sheaf of a Cohen--Macaulay curve $C \subset X$
with no components supported on fibers of $\pi$.
Then $\widetilde{\CI_C}$ is a sheaf.
\end{lemma}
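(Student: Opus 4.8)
The plan is to apply $\Phi_H$ to the structure sequence of $C$ and thereby reduce the statement to a single cohomological-concentration claim for $\Phi_H(\CO_C)$, which in turn follows from the purity of $\CO_C$ and the ``no vertical components'' hypothesis.

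First I would view the short exact sequence $0 \to \CI_C \to \CO_X \to \CO_C \to 0$ as a distinguished triangle. Since $\Phi_H$ is a contravariant triangulated functor (it contains the duality $\mathbb{D}$) satisfying $\widetilde{\CE[i]} = \widetilde{\CE}[-i]$, applying it gives the distinguished triangle
\[ \widetilde{\CO_C} \to \widetilde{\CO_X} \to \widetilde{\CI_C} \to \widetilde{\CO_C}[1] \,. \]
By Lemma~\ref{Lemma_PHI_H_examples}(1) the object $\widetilde{\CO_X} = \CO_X(-\pi^\ast H)$ sits in cohomological degree $0$. Hence, if I can show $\widetilde{\CO_C}$ is concentrated in cohomological degree $1$, then the long exact sequence of cohomology sheaves of this triangle forces $\CH^i(\widetilde{\CI_C}) = 0$ for all $i \neq 0$, i.e. $\widetilde{\CI_C}$ is a sheaf. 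So everything reduces to locating $\widetilde{\CO_C}$.

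The key computational idea is to move the duality functor to the front, \emph{before} the Fourier--Mukai transform. Writing $\Phi_H = \mathbb{D}\circ \BT_M \circ \phi_{\CP} \circ \BT_{\CO_X(S_0)}$ with $M = \pi^\ast\CO_S(H - K_S)\otimes\CO_X(S_0)$, and using $\mathbb{D}\circ \BT_L = \BT_{L^{-1}}\circ \mathbb{D}$ together with the identity $\mathbb{D}\circ\phi_{\CP} = \phi_{\CP}^{-1}\circ\mathbb{D}$ of Lemma~\ref{Lemma_phiP_autoequivalence}(3), I obtain
\[ \widetilde{\CO_C} = \BT_{M^{-1}}\,\phi_{\CP}^{-1}\,\mathbb{D}\big(\CO_C(S_0)\big) \,. \]
Because $C$ is Cohen--Macaulay, $\CO_C(S_0)$ is a Cohen--Macaulay sheaf of pure dimension $1$, so on the Calabi--Yau threefold $X$ (where $\omega_X \simeq \CO_X$) its derived dual is $\mathbb{D}(\CO_C(S_0)) = \CK[-2]$, where $\CK := \ext^2(\CO_C(S_0),\CO_X)$ is again a pure one-dimensional sheaf supported on $C$. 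It then remains to prove that $\phi_{\CP}^{-1}(\CK)$ is concentrated in degree $-1$, for then $\phi_{\CP}^{-1}(\CK)[-2]$, and hence $\widetilde{\CO_C}$, lies in degree $1$. Here the hypothesis that $C$ has no components in fibers enters: the support $C$ meets every fiber $X_s$ in a finite set, so $\CK$ restricts to a zero-dimensional sheaf on each fiber. Using $\phi_{\CP}^{-1} = \phi_{\CQ}$ with $\CQ = \CP^\vee\otimes p^\ast\pi^\ast\omega_S^\vee[1]$ (Lemma~\ref{Lemma_phiP_autoequivalence}(1)), write
\[ \phi_{\CP}^{-1}(\CK) = Rq_\ast\big(p^\ast(\CK\otimes\pi^\ast\omega_S^\vee)\otimes\CP^\vee\big)[1] \,. \]
The fibers of $q$ are the curves $X_s$, so $R^{>1}q_\ast = 0$; and since the integrand restricts on each such fiber to a zero-dimensional sheaf, its $H^1$ vanishes, whence base change for the top direct image gives $R^1 q_\ast = 0$. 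Thus the pushforward is a sheaf in degree $0$, the shift $[1]$ places $\phi_{\CP}^{-1}(\CK)$ in degree $-1$, and the final twist by $M^{-1}$ preserves degrees.

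I expect the conceptual crux to be the decision to dualize before transforming. If one instead computed $\phi_{\CP}(\CO_C(S_0))$ first, the transform is a two-dimensional sheaf, and the conclusion would require establishing its \emph{purity} --- a harder statement, since one would have to exclude embedded components over the singular fibers of $\pi$. Commuting $\mathbb{D}$ past $\phi_{\CP}$ sidesteps this entirely: purity is invoked only for the pure curve sheaf $\CO_C(S_0)$, where it is immediate from the Cohen--Macaulay hypothesis. The remaining technical point, the vanishing $R^1 q_\ast = 0$, is routine once the ``no vertical components'' assumption supplies the fiberwise zero-dimensionality of $\CK$.
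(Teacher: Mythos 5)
Your proof is correct, but it takes a genuinely different route from the paper's. The paper establishes the two cohomological bounds separately: for the upper bound it tests $\widetilde{\CI_C}$ against skyscraper sheaves, using $\widetilde{\BC_x} = \imath_{s\ast}\Fm_x[-2]$ to convert the vanishing of $\Hom(\widetilde{\CI_C},\BC_x[-i])$ into $\Ext$-computations between $\imath_{s\ast}\Fm_x$ and $\CI_C$ (this is where purity of $\CO_C$ and the absence of vertical components enter); for the lower bound it uses the same structure sequence as you do to reduce to showing $\widetilde{\CO_C}$ lies in degrees $\geq 1$, which it proves by testing against the negative line bundles $\pi^{\ast}\CL^{\vee}(-nS_0)$, whose transforms are the shifted locally free sheaves $\CV_n$ of Lemma~\ref{Lemma_PHI_H_examples}(3). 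You instead extract both bounds from a single direct computation: commuting $\BD$ past $\phi_{\CP}$ via Lemma~\ref{Lemma_phiP_autoequivalence}(3), using the Cohen--Macaulay hypothesis to identify $\BD(\CO_C(S_0))$ with a pure one-dimensional sheaf $\CK$ placed in degree $2$, and then observing that $q$ restricted to $p^{-1}(C)$ has finite fibers (precisely because $C$ has no vertical components), so the Fourier--Mukai pushforward has no $R^1$ and $\widetilde{\CO_C}$ is concentrated in degree exactly $1$. This buys a more economical argument --- one computation instead of two dual tests --- and makes the role of each hypothesis transparent; the only point I would tighten is the justification of $R^1q_{\ast}=0$: rather than appealing to base change for the top direct image (which in general wants flatness over the target), it is cleaner to note that the integrand is supported on the closed subset $p^{-1}(C)=C\times_S X$, on which $q$ is finite, so all higher direct images vanish. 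The paper's generator-testing style has the mild advantage of reappearing verbatim in the later arguments of Section~\ref{Section_Derived_equivalence} (e.g.\ Lemma~\ref{sublemma1} and Lemma~\ref{Lemma_Purity}), but your computation is equally rigorous.
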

\begin{proof} For all $x \in X$ and $i \geq 1$ we claim the vanishing of
\begin{equation*}
\Hom( \widetilde{\CI_C}, \BC_x[-i] ) = \Hom( \imath_{s \ast} \Fm_x, \CI_C[2-i] )
\end{equation*}
with $s=\pi(x)$ and $\imath_s: X_s \hookrightarrow X$.
The case $i\geq 3$ is immediate. The case
$i=2$ follows since $\CI_C$ is torsion free. For $i=1$ we have
\[ \Ext^1( \imath_{s \ast} \Fm_x, \CI_C )
= \Hom( \imath_{s \ast} \Fm_x, \CO_C )
= \Hom_C( j^{\ast} \imath_{s \ast} \Fm_x, \CO_C ) \,,
\]
where $j : C \to X$ is the inclusion. Since $C$ has no
components supported on fibers, $j^{\ast} \imath_{s \ast} \Fm_x$
is zero-dimensional. Since $C$ is Cohen--Macaulay,
$\CO_C$ is pure and we conclude
$\Hom_C( j^{\ast} \imath_{s \ast} \Fm_x, \CO_C ) = 0$.
We conclude $\widetilde{\CI_C}$ is concentreated in degrees $\leq 0$.

We prove the lower bound.
By applying $\Phi_H$ to the exact sequence
\[ 0 \to \CI_C \to \CO_X \to \CO_C \to 0 \]
and taking the long exact sequence in cohomology
it suffices to prove $\widetilde{\CO_C}$
is concentrated in degrees $\geq 1$.

For all $\CL \in \Pic(S)$, $n > 1$ and $i \leq 0$
we have by Lemma \ref{Lemma_PHI_H_examples}
\begin{equation*}
\Hom( \pi^{\ast}\CL^{\vee}(-n S_0), \widetilde{\CO_C}[i] )
= \Hom( \CO_C, \pi^{\ast} \CL \otimes \CV_n [i+1] )
= 0,
\end{equation*}
since $\CO_C$ is supported on a curve and $\CV_{n}$ is locally free.
Since $\pi^{\ast} \CL (n S_0)$ can be
taken arbitrarily positive here,
$\widetilde{\CO_C}$ is concentrated in degrees $\geq 1$.
\end{proof}

\subsection{Two term complexes: the reduced case.}
\label{Section_Two_term_complexes}
Consider the following conditions on complexes
$I^{\bullet} \in D^b \Coh(X)$:
\begin{enumerate}
\item[(a)] $\ch( I^{\bullet} ) = (1, 0, -(H+dF), -n)$
for some $d \geq 0, n \in \BZ$.
\item[(b)] $I^{\bullet}$ is two-term:
$h^i( I^{\bullet} ) = 0$ whenever $i\neq 0,1$.
\item[(c)] $h^0( I^{\bullet} )$ is torsion free.
\item[(d)] $h^1( I^{\bullet} ) \in \CC_{[-1, \infty]}$.
\end{enumerate}

The aim of this subsection is to prove the following proposition.

\begin{prop}\label{MainProp}
Let $H \in \Pic(S)$ be a reduced effective class.
Then a complex $I^{\bullet} \in D^b \Coh(X)$ satisfies
Properties (a-d) above if and only if $\widetilde{I^{\bullet}}$ does.
\end{prop}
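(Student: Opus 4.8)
The plan is to prove both implications of the biconditional by checking each of the four properties (a--d) is preserved under $\Phi_H$, exploiting the fact that $\Phi_H$ is an involution (Lemma~\ref{inv}) so that the reverse implication follows formally from the forward one. Property (a) is immediate from Lemma~\ref{Lemma_Chern_character}, which records exactly how $\Phi_H$ transforms the Chern character and confirms that the transformed complex again has the form $(1,0,-(H+\tilde dF),-\tilde n)$ with $\tilde d \geq 0$. The real content is in propagating the structural conditions (b--d). First I would analyze how $\Phi_H$ interacts with the cohomology sheaves of $I^{\bullet}$ by setting up a spectral sequence (or, since $I^{\bullet}$ is two-term, a direct long exact sequence) relating $h^i(\widetilde{I^{\bullet}})$ to $\Phi_H$ applied to $h^0(I^{\bullet})$ and $h^1(I^{\bullet})$.

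The key computational inputs are already assembled: Lemma~\ref{phi_H_CC} tells us precisely that $\Phi_H$ sends semistable fiber sheaves of slope in $[-1,\infty]$ to shifts $G[-2]$ of semistable fiber sheaves, and crucially that the slope interval $[-1,\infty]$ is preserved under the slope transformation $\mu_0 \mapsto -\mu_0/(1+\mu_0)$; this is exactly why condition (d) is stable under $\Phi_H$. Meanwhile Lemma~\ref{sublemma3} handles the ``section part'': for the ideal sheaf $\CI_C$ of a Cohen--Macaulay curve with no fiber components, $\widetilde{\CI_C}$ is again a sheaf. The strategy is to decompose the torsion-free sheaf $h^0(I^{\bullet})$ — which, having rank one and trivial determinant after the normalization, is an ideal sheaf $\CI_C$ of such a curve — and track its image together with the fiber sheaf $h^1(I^{\bullet}) \in \CC_{[-1,\infty]}$ through the equivalence. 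The two pieces land in complementary degrees ($\widetilde{\CI_C}$ in degree $0$ and $\widetilde{h^1(I^{\bullet})}$ in degree $-2$ up to the reindexing forced by $\Phi_H[i]=\widetilde{\phantom{x}}[-i]$), so after reassembling one must verify the result is again concentrated in two adjacent degrees with torsion-free $h^0$ and fiber-supported $h^1$ in the correct slope range.

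The main obstacle will be controlling the interaction between the curve $C$ and the fibers of $\pi$, and this is precisely where the \emph{reduced} hypothesis on $H$ enters. For a general effective $H$ the support curve of a stable pair could contain a whole fiber as a component, which would make $h^0(I^{\bullet})$ fail to be the ideal sheaf of a curve with no fiber components, breaking the applicability of Lemma~\ref{sublemma3}. I expect the crux of the argument to be a lemma showing that when $H$ is reduced the section component of $C$ meets each relevant fiber in a controlled way, so that the torsion-free part and the fiber-supported part of $I^{\bullet}$ genuinely separate under $\Phi_H$ rather than mixing into a single indecomposable complex. Concretely, the danger is a nonzero connecting map in the long exact sequence that would create cohomology in a forbidden degree or produce torsion in $h^0(\widetilde{I^{\bullet}})$; ruling this out requires a vanishing of the form $\Hom$ or $\Ext^1$ between the transformed pieces, which I would establish using the slope bounds from Lemma~\ref{phi_H_CC} together with purity. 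Once the forward implication is secured, applying $\Phi_H$ a second time and invoking $\Phi_H^2 = \mathrm{id}$ yields the converse with no additional work, completing the proof of Proposition~\ref{MainProp}.
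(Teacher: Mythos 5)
Your overall architecture (use the involution $\Phi_H^2=\mathrm{id}$ to get the converse for free, Lemma~\ref{Lemma_Chern_character} for (a), the triangle $h^0(I^\bullet)\to I^\bullet\to h^1(I^\bullet)[-1]$ to reduce to $h^0(I^\bullet)=\CI_C$, Lemma~\ref{phi_H_CC} for the slope bookkeeping in (d), and Lemma~\ref{sublemma3} for the horizontal part) matches the paper's Steps 1 and 4. But there are two genuine gaps. First, you assert that $h^0(I^\bullet)$ is the ideal sheaf of a Cohen--Macaulay curve with no fiber components, so that Lemma~\ref{sublemma3} applies directly, and you locate the danger of fiber components in the \emph{non-reduced} case. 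This is wrong: even for $H$ irreducible, a stable pair in class $H+dF$ with $d>0$ has support containing vertical components (that is what the $dF$ accounts for), and $C$ may have embedded points. The paper spends Steps 2--4 peeling these off for \emph{arbitrary} effective $H$: one passes to the maximal CM subcurve, then to $C\cap\pi^{-1}(D)$, and the nontrivial content is the Claim that the fiber-supported quotient $K$ lies in $\CC_{[0,\infty)}$ (proved via purity of $\CO_C$ and surjections $\CO_X^k\twoheadrightarrow K$), followed by the Harder--Narasimhan splitting $K_+\to K\to K_-$ to show $h^1(\widetilde{\CI_C})\cong h^2(\widetilde{K})=h^2(\widetilde{K_+})\in\CC_{[-1,\infty]}$. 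None of this uses reducedness, and none of it is present in your outline.

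Second, and more seriously, you have not identified the mechanism behind property (c), which is the only place reducedness actually enters. The paper uses the exact sequence $0\to\CO_X(-\pi^*H)\to\CI_C\to\CI_{C/W}\to 0$ with $W=\pi^{-1}(D)$; because $H$ is reduced, the divisorial image $D=\pi(C)$ lies in $|H|$ and $C\to D$ is generically of degree $1$ on every component, which is exactly the hypothesis of Lemma~\ref{sublemma1} guaranteeing $\widetilde{\CI_{C/W}}=\CE[-1]$. The long exact sequence then embeds $h^0(\widetilde{\CI_C})$ into $\CO_X=\Phi_H(\CO_X(-\pi^*H))$, so it is an ideal sheaf. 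Your proposed substitute --- killing a connecting map by a $\Hom$/$\Ext^1$ vanishing derived from slope bounds on fiber sheaves plus purity --- cannot see this: the obstruction to torsion-freeness of $h^0(\widetilde{I^\bullet})$ is the possible nonvanishing of $h^0(\widetilde{\CI_{C/W}})$, a rank-zero sheaf supported on the surface $W$, and the proof that it vanishes is a base-change computation of $\phi_{\CP}(\CI_{C/W}(S_0))$ fiber by fiber over the locus where $C\to D$ is an isomorphism, not a slope argument. The paper's counterexample with $H=B+2F$ on $\mathrm{K3}\times E$ shows precisely this failure mode when $C\to D$ has a degree-$2$ component: $h^0(\widetilde{\CI_{C/W}})\neq 0$ becomes torsion in $h^0(\widetilde{\CI_C})$. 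Without Lemma~\ref{sublemma1} or an equivalent, your argument for (c) does not close.
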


First we prove a lemma.
\begin{lemma} \label{sublemma1}
Let $D$ be an effective divisor on $S$, consider the surface
\[ W= \pi^{-1}(D) \]
and let $\CI_{C/W}$ be the ideal sheaf of a curve $C$ in $W$.
Assume the projection $C \to D$
is an isomorphism over a dense open
subset $U \subset D$, and the class of $C$
is $\iota_\ast D +mF \in H_2(X, \BZ)$ for some $m \in \BZ$.
Then \[ \widetilde{\CI_{C/W}} = \CE[-1] \]
for a $1$-dimensional sheaf $\CE$.
\end{lemma}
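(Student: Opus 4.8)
The plan is to analyze $\widetilde{\CI_{C/W}}$ by resolving the situation fiber by fiber over the base $D$, exploiting that $C \to D$ is generically an isomorphism. First I would record the short exact sequence on $W$,
\[ 0 \to \CI_{C/W} \to \CO_W \to \CO_C \to 0, \]
and apply $\Phi_H$ to obtain a distinguished triangle relating $\widetilde{\CI_{C/W}}$, $\widetilde{\CO_W}$, and $\widetilde{\CO_C}$. Since $W = \pi^{-1}(D)$ is the preimage of a divisor, $\CO_W$ sits in the sequence $0 \to \CO_X(-\pi^\ast D) \to \CO_X \to \CO_W \to 0$, and $\widetilde{\CO_X}$ is already computed in Lemma~\ref{Lemma_PHI_H_examples}(1). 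Thus the cohomological concentration of $\widetilde{\CO_W}$ can be extracted from the behavior of $\Phi_H$ on the line bundles $\CO_X$ and $\CO_X(-\pi^\ast D)$, where one controls degrees using Lemma~\ref{Lemma_phiP} and the fact that tensoring by bundles pulled back from $S$ commutes with $\phi_{\CP}$.

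The heart of the argument is to pin down $\widetilde{\CO_C}$. The key geometric input is that $C$ meets the generic fiber of $\pi|_W$ in a single reduced point (because $C \to D$ is an isomorphism over the dense open $U$), so $\CO_C$ is, generically along $D$, the structure sheaf of a section. The plan is to mimic the computation of $\widetilde{\CO_{\iota(B)}}$ alluded to in Lemma~\ref{Lemma_Chern_character}: over $U$ the sheaf $\CO_C$ restricts to a section-type sheaf, and Lemma~\ref{Lemma_phiP}(2) shows $\phi_{\CP}$ sends section sheaves $\iota_\ast \CM$ to the pullback $\pi^\ast \CM$, hence after the twists and dualization in $\Phi_H$ one lands in a single cohomological degree. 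I would first show, by a Hom-vanishing argument against skyscrapers as in Lemma~\ref{sublemma3}, that $\widetilde{\CO_C}$ is concentrated in a single degree over $U$; then combine with the triangle to locate $\widetilde{\CI_{C/W}}$ in cohomological degree $1$, i.e. $\widetilde{\CI_{C/W}} = \CE[-1]$ for a sheaf $\CE$. The computation of $\ch$ via Lemma~\ref{Lemma_Chern_character} confirms $\CE$ is $1$-dimensional.

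The main obstacle is the behavior over the closed locus $D \setminus U$, where $C \to D$ fails to be an isomorphism: there $C$ may acquire extra vertical components or become non-reduced, and $\CO_C$ need no longer restrict to a section sheaf on the fiber. At these points the naive fiberwise computation of $\widetilde{\CO_C}$ breaks down, and one must argue that such bad fibers, forming a locus of codimension at least one in $D$, contribute only to lower-dimensional pieces of the relevant cohomology sheaves and therefore cannot shift the cohomological concentration. The plan is to handle this by a purity or support-dimension argument: show that any cohomology of $\widetilde{\CI_{C/W}}$ outside degree $1$ would be supported on the fibers over $D \setminus U$, hence at most $1$-dimensional, and then rule it out using the reflexivity input on the Poincaré transform together with the characterization of $\pi$-torsion from Definition~\ref{Defn_pi_torsion}. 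Once this codimension bookkeeping is in place, the sheaf $\CE$ is genuinely a sheaf (not merely a complex) and the lemma follows.
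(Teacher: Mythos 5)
Your outline diverges from the paper's proof and, as written, has a genuine gap at the decisive step. The paper does not pass through $\CO_W$ and $\CO_C$ at all: it works directly with the intermediate transform $A=\phi_{\CP}(\CI_{C/W}(S_0))$, shows via the Chern character plus the fiberwise base-change computation over $U$ that \emph{both} $h^0(A)$ and $h^1(A)$ are of dimension $\leq 1$, and then concludes that $\BD(h^i(A)\otimes\CL)$ is concentrated in degrees $\geq 2$ because $\ext^{i}(-,\CO_X)=0$ for $i<2$ when the support has codimension $\geq 2$. That is the mechanism that kills $h^0(\widetilde{\CI_{C/W}})$: the vanishing of $h^0(\widetilde{\CI_{C/W}})=\ext^1(h^1(A)\otimes\CL,\CO_X)$ is \emph{equivalent} to $h^1(A)$ having $1$-dimensional support. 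Your proposal attaches the dimension estimate to the wrong object: knowing that a putative cohomology sheaf of $\widetilde{\CI_{C/W}}$ itself is $1$-dimensional and supported over $D\setminus U$ does not make it vanish, and neither "reflexivity of the Poincar\'e transform" nor the definition of $\pi$-torsion supplies the missing implication. (One can in fact rule out such stray cohomology by applying the involution and Lemma~\ref{phi_H_CC} together with purity of $\CI_{C/W}$, but that is a different argument from the one you gesture at, and it still presupposes an a priori two-sided bound on the cohomological amplitude.)

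The upper bound on degrees is also not secured by your route. In the paper it comes from $\Hom(\widetilde{\CI_{C/W}},\BC_x[-i])=\Hom(\imath_{s\ast}\Fm_x,\CI_{C/W}[2-i])$, which vanishes for $i=2$ by sandwiching $\CI_{C/W}$ between the torsion-free sheaf $\CI_C$ and the line bundle $\CO_X(-\pi^\ast D)$. Going instead through the triangle $\widetilde{\CO_C}\to\widetilde{\CO_W}\to\widetilde{\CI_{C/W}}$ identifies $h^2(\widetilde{\CI_{C/W}})$ with $h^3(\widetilde{\CO_C})$, so you would need to show that $\phi_{\CP}(\CO_C(S_0))$ has no zero-dimensional associated points --- a nontrivial purity statement over the bad fibers (where $C$ may have vertical components or embedded points) that your sketch does not address; note that the Hom-vanishing of Lemma~\ref{sublemma3} is not available here precisely because $C$ may contain fiber components. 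Finally, a small point: Lemma~\ref{Lemma_Chern_character} applies to rank-one complexes with $\ch=(1,0,-(H+dF),-n)$ and so does not compute $\ch(\widetilde{\CI_{C/W}})$; the $1$-dimensionality of $\CE$ requires the separate rank-zero Chern character computation that the paper carries out for $A$.
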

\begin{proof}
The proof proceeds in three steps.
\vspace{5pt}

\noindent \textbf{Step 1.}
Since $\CI_{C/W}(S_0)$ is a sheaf and 
$q : X \times_S X \to X$
has relative dimension $1$,
the complex
\[ A = \phi_{\CP}(\CI_{C/W}(S_0)) \]
is concentrated in degrees $0$ and $1$.
We show both $h^0(A)$ and $h^1(A)$ are of dimension $\leq 1$.
Since $C \to D$ is an isomorphism away from a $0$-dimensional
subset of $D$ we have
\[ \ch( \CI_{C/W} ) = (0, \pi^{\ast} D, -\iota_{\ast} D + aF, b) \]
for some $a,b \in \BZ$. By a direct calculation
this yields
\[ \ch( A ) = (0, 0, -\iota_{\ast} D + a'F, b') \]
for some $a',b'$. Hence it suffices to show $h^1(A)$ is supported on curves.

Let $U \subset D$ be the open subset over which $C \to D$
is an isomorphism.
In particular $\CO_C|_{\pi^{-1}(U)}$ is flat over $\CO_D|_{U}$.
For every $y \in X$ with $s = \pi(y) \in U$ we have by base change
\[ h^1(A) \otimes \BC_y
= H^1( X_s, \CI_{C/W}(S_0)|_{X_{s}} \otimes \CP_y )
\]
where $\CP_y = \phi_{\CP}(\BC_y)$ is the
sheaf corresponding to the point $y$.
Applying flatness we obtain
\[ h^1(A) \otimes \BC_y = H^1( X_s, \CI_{C \cap X_s/X_s}(S_0) \otimes \CP_y) \]
which is non-zero only if $y = C \cap X_s$. We conclude
\[ \mathrm{Supp}(h^1(A))\cap \pi^{-1}(U)=C\cap \pi^{-1}(U)\,, \]
and therefore that $h^1(A)$ is $1$-dimensional.

\vspace{6pt}
\noindent \textbf{Step 2.} We prove $\widetilde{I_{C/W}}$ is concentrated in degrees $\geq 1$.

Let $\CL = \CO_X(S_0 + \pi^\ast (H-K_S))$ and
let $G_i : = h^i(A)\otimes \CL$ for $i=0,1$.
Applying $\BD \circ \BT_{\CL}$ to the canonical exact triangle
\[
h^0(A) \rightarrow A \rightarrow h^1(A)[-1]
\]
yields the exact triangle
\begin{equation*}\label{eq3}
\mathbb{D}(G_1)[1] \rightarrow \widetilde{\CI_{C/W}} \rightarrow  \mathbb{D}(G_0).
\end{equation*}
By Step 1 the sheaves $G_i$ are of dimension $\leq 1$.
Hence the $\mathbb{D}(G_i)$ are concentrated in degrees $\geq 2$,
and so $\widetilde{\CI_{C/W}}$ is concentrated in degrees $\geq 1$.

\vspace{6pt}
\noindent \textbf{Step 3.} We show $\widetilde{\CI_{C/W}}$
is concentrated
in degrees $\leq 1$. 

By Lemma \ref{Lemma_PHI_H_examples}
we have for all $x \in X$
\begin{equation*}\label{eqq}
\Hom( \widetilde{\CI_{C/W}} , \BC_x[-i] )
=
\Hom( \imath_{s \ast} \Fm_x, \CI_{C/W}[2-i] ) \,,
\end{equation*}
which we claim vanishes for all $i \geq 2$.
The case $i\geq 3$ is immediate.
For the vanishing in case $i=2$,
we apply $\Hom( \imath_{s \ast} \Fm_x, \cdot )$ to the exact sequence
\[ 0 \to \CO_X(- \pi^{\ast} D) \to \CI_C \to \CI_{C/W} \to 0 \]
and use that $\imath_{s \ast} \Fm_x$ is supported in dimension $1$
and $\CI_C$ is torsion free.
\end{proof}

\subsection{Proof of Proposition \ref{MainProp}}
Assume a complex $I^{\bullet}$ satisfies (a-d).
Since $\Phi_H$ is an involution it is enough
to prove $\widetilde{I^{\bullet}}$ satisfies (a-d).

By Lemma \ref{Lemma_Chern_character} the complex
$\widetilde{I^{\bullet}}$ satisfies (a),
so we need to show (b,c,d).
We will make several reduction steps.

\vspace{6pt}
\noindent \textbf{Step 1.}
Since $h^1(I^{\bullet}) \in \CC_{[-1, \infty]}$
we have $\widetilde{h^1(I^{\bullet})} = G[-2]$
for some $G \in \CC_{[-1, \infty]}$.
Applying $\Phi_H$ to the canonical exact triangle
\[
h^0( I^{\bullet} ) \to I^{\bullet}
\to h^1( I^{\bullet} )[-1]
\]
and taking the long exact sequence in cohomology we therefore find
\[
0 \to h^0( \widetilde{I^{\bullet}} )
\to h^0\big( \widetilde{ h^0(I^{\bullet}) } \big)
\to G \to h^1( \widetilde{I^{\bullet}} )
\to h^1( \widetilde{ h^0(I^{\bullet}) } ) \to 0
\]
and $h^i( \widetilde{I^{\bullet}} ) = h^i( \widetilde{ h^0(I^{\bullet})})$
for $i \neq 0,1$.
Hence $\widetilde{I^{\bullet}}$ satisfies (b-d)
if $\widetilde{ h^0( I^{\bullet} ) }$ does.
We may therefore assume that
$I^{\bullet}$ is the ideal sheaf $\CI_C$
for some curve $C \subset X$.

\vspace{6pt}
\noindent \textbf{Step 2.}
Let $C' \subset C$ be the unique maximal Cohen--Macaulay subcurve.
We have the sequence
\[ 0 \to \CI_{C} \to \CI_{C'} \to Q \to 0 \]
where $Q$ is $0$-dimensional. Since $\widetilde{Q} = G[-2]$
for some $G \in \CC_{-1}$ we have
the exact sequence
\[ 0 \to h^1( \widetilde{\CI_{C'}} )
\to h^1( \widetilde{\CI_{C}} ) \to G \to 0 \,. \]
and $h^i( \widetilde{\CI_{C'}} ) = h^i( \widetilde{\CI_C} )$ for all $i \neq 1$.
Hence $\widetilde{\CI_C}$ satisfies
(b-d) if $\widetilde{\CI_{C'}}$ does,
and so we may assume $C$ is Cohen--Macaulay.

\vspace{6pt}
\noindent \textbf{Step 3.}
Let $D \subset S$ be the \emph{divisor} on $S$
defined by the image $\pi(C) \subset S$.
(In particular, $D$ remembers only
the codimension $1$ locus
of $\pi(C)$ and not isolated points or
thickening at points.)
We consider the curve
\[ C' = C \cap \pi^{-1}(D) \subset \pi^{-1}(D) \,. \]
Since $C' \subset C$, we have the exact sequence
\begin{equation} 0 \to K \to \CO_C \to \CO_{C'} \to 0 \,. \label{exact1} \end{equation}
with $K \in \CC$.

\vspace{4pt}
\noindent \emph{Claim:} $K$ is in $\CC_{[0, \infty)}$.

\vspace{3pt}
\noindent \emph{Proof of Claim.}
By construction $K$ is supported on fibers of $\pi$.
Moreover since $C$ is Cohen--Macaulay
(which is equivalent to $\CO_C$ is pure)
$K$ is pure of dimension $1$.

For our convenience let us assume $K$ is supported
over a point $s \in S$.

If $s \notin D$ then
$K$ is the structure sheaf of the connected component of $C$
that lies over $s \in S$.
It follows that $K$ admits a surjection $\CO_X \to K$.
Hence, if $K'$ is the the Harder-Narasimhan
factor of $K$ with smallest slope $\mu_0$, then
the natural composition $\CO_X \to K \to K'$ is
non-zero. Therefore $\mu_0 \geq 0$ by semistability,
and so $K \in \CC_{[0, \infty]}$, see
also the proof of \cite[Prop 6.8]{T12}.

If $s \in D$ then $K$ is supported over $F_n = \Spec \CO_X/ \pi^{\ast} \Fm_s^{n}$
for some $n \gg 0$ where $\Fm_s$ is the ideal sheaf of $s \in S$.
Let $f \in \CO_S$ be the local equation of $D$ near $s$, and let
$\overline{f}$ be its image in $\CO_S / \Fm_s^n$.
Consider the exact sequence
\begin{equation} 0 \to ( \overline{f} ) \to \CO_S / \Fm_s^n
\to \CO_S / (\Fm_s^n, f) \to 0 \,. \label{m1} \end{equation}
By flatness of $\pi$, \eqref{m1} remains exact under
pullback:
\begin{equation} 0 \to \pi^{\ast} ( \overline{f} ) \to \CO_X / \pi^{\ast} \Fm_s^n
\to \CO_X / \pi^{\ast} (\Fm_s^n, f) \to 0 \,. \label{m2} \end{equation}
Tensoring \eqref{m2} with $\CO_C$ we obtain
\[
\pi^{\ast} ( \overline{f} ) \otimes_{\CO_X} \CO_C
\twoheadrightarrow K
\hookrightarrow \CO_{C \cap F_n}
\to \CO_{C' \cap F_n} \to 0 \,.
\]
For every zero-dimensional sheaf $T$ on $S$
a surjection $\CO_S^k \to T \to 0$ for some $k \geq 1$
induces a surjection
$\CO_X^k \to \CO_C^k \to \pi^{\ast} T \otimes_{\CO_X} \CO_C$
(by pullback via $\pi^{\ast}$ and
tensoring with $\CO_C$).
Applying this to $T = (\overline{f})$
we obtain a composition of surjections
\begin{equation} \label{m3} \CO_X^k
\twoheadrightarrow \pi^{\ast} ( \overline{f} ) \otimes_{\CO_X} \CO_C
\twoheadrightarrow K \,.
\end{equation}
Let again $K'$ be the Harder-Narasimhan factor of $K$
with smallest slope.
Then composing \eqref{m3} with $K \to K'$
yields a non-zero section of $K'$. We find
$K' \in \CC_{[0, \infty)}$
and hence $K \in \CC_{[0, \infty)}$. \qed

We return to Step 3 of the reduction.
From \eqref{exact1} we obtain
\[ 0 \to \CI_C \to \CI_{C'} \to K \to 0 \,. \]
Since
$K \in \CC_{[0, \infty]}$
we have $\widetilde{K} = A[-2]$ for some
$A \in \CC_{(-1, \infty)}$.
Hence argueing as in Step 2 we find
$\widetilde{\CI_C}$ satisfies (b-d) if $\widetilde{\CI_{C'}}$ does.
Hence we may assume
$C$ is contained inside the surface $\pi^{-1}(D)$.

\vspace{6pt}
\noindent \textbf{Step 4.} We prove Properties (b) and (d).

Let $C'$ be the union of all
irreducible component of $C$ which are not supported
on fibers of $\pi$. The canonical exact sequence 
\[
0 \to \CI_C \to \CI_{C'} \to K \to 0
\]
with $K \in \CC$ yields the exact triangle
\begin{equation} \label{3353321}
\widetilde{K} \to \widetilde{\CI_{C'}} \to \widetilde{\CI_C} \,.
\end{equation}
By Lemma \ref{sublemma3} the complex
$\widetilde{\CI_{C'}}$ is a sheaf. Hence
taking long exact sequence of \eqref{3353321}
yields
$h^i( \widetilde{\CI_C} ) = 0$ for $i \neq 0,1$ (Property (b)),
and the isomorphism
\[
h^1(\widetilde{\CI_C}) \xrightarrow{\ \simeq \ } h^2(\widetilde{K}).
\]
Let $0 \to K_{{+}} \to K \to K_{{-}} \to 0$ be the unique
filtration of $K$ with $K_{+} \in \CC_{[-1, \infty]}$
and $K_{-} \in \CC_{[-\infty, -1)}$.
Then applying $\Phi_H$, using the long exact sequence in cohomology
and Lemma~\ref{phi_H_CC} yields
\[
h^2(\widetilde{K}) = h^2( \widetilde{K_{+}} )
\in \CC_{[-1, \infty]} \,. \]

\vspace{7pt}
\noindent \textbf{Step 5.} We prove that $h^0( \widetilde{I_C} )$
is an ideal sheaf, which implies Property (c).

Let $D$ be the divisor on $S$ as in Step 3
and recall that we may assume
\begin{equation} \label{eq_inclusion} C \subset W = \pi^{-1}(D) \,. \end{equation}
Since the class $H \in \Pic(S)$ is \emph{reduced}
we have $D \in |H|$. Hence the inclusion \eqref{eq_inclusion}
gives the exact sequence
\begin{equation*} 0 \to \CO_X(- \pi^{\ast} H) \to \CI_C \to \CI_{C/W} \to 0
\label{43413} \end{equation*}
where $\CI_{C/W}$ is the ideal sheaf of $C$ in $W$.
Applying $\Phi_H$ 
and taking cohomology yields
\begin{equation} \label{11111}
0 \to h^0(\widetilde{\CI_{C/W}}) \to h^0(\widetilde{\CI_C}) \to \CO_X \to
h^1( \widetilde{\CI_{C/W}}) \to h^1( \widetilde{\CI_C} ) \to 0 \,.
\end{equation}

Since $C$ has class $H + dF$ and $H$ is \emph{reduced},
the projection $C \rightarrow D$ is generically of degree $1$
on every component of $C$.
By Lemma \ref{sublemma1} the complex $\widetilde{\CI_{C/W}}[1]$
is a sheaf. Hence $h^0( \widetilde{\CI_C} )$ is an ideal sheaf.
\qed
\vspace{10pt}

The proof of Proposition~\ref{MainProp}
requires $H$ to be reduced only in Step 5.
Hence we obtain the following corollary
in case $H$ is not necessarily reduced.

\begin{cor} \label{Cor_MainProp} Let $H \in \Pic(S)$ be effective.
If $I^{\bullet} \in D^b \Coh(X)$
satisfies properties (a-d) above,
then $\widetilde{I^{\bullet}}$ satisfies (a), (b) and (d).
\end{cor}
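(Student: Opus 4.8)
The plan is to reuse the proof of Proposition~\ref{MainProp} essentially verbatim, observing that Corollary~\ref{Cor_MainProp} differs from it only by dropping property (c) from the conclusion, and that---as the remark preceding the corollary suggests---reducedness of $H$ was invoked \emph{only} to establish (c). I would therefore revisit each step of that proof and confirm that everything except Step~5 carries over to an arbitrary effective class $H$.

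First, property (a) for $\widetilde{I^\bullet}$ is supplied by Lemma~\ref{Lemma_Chern_character}, whose Chern character computation needs only effectivity of $H$. I would then run through the reduction Steps~1--3, which replace a general $I^\bullet$ satisfying (a-d) by the ideal sheaf $\CI_C$ of a Cohen--Macaulay curve $C\subset W=\pi^{-1}(D)$, with $D$ the divisor cut out by $\pi(C)$. The tools used there---the action of $\Phi_H$ on fiber sheaves (Lemma~\ref{phi_H_CC}), the vanishing in Lemma~\ref{sublemma3}, and the Step~3 Claim that the kernel $K$ lies in $\CC_{[0,\infty)}$---make no reference to the linear system $|H|$, so these reductions go through unchanged. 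Step~4 then yields (b) and (d) by applying Lemma~\ref{sublemma3} to the union $C'$ of the components of $C$ not supported on fibers; since $C'$ has no fiber components by construction, this is unconditional.

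The sole appearance of reducedness is in Step~5, where one uses $D\in|H|$ (valid precisely because $H$ is reduced) to form the sequence $0\to\CO_X(-\pi^\ast H)\to\CI_C\to\CI_{C/W}\to 0$ and deduce (c) via Lemma~\ref{sublemma1}. Omitting (c) from the conclusion, Step~5 becomes superfluous, and (a), (b), (d) hold for any effective $H$. The main point I would check carefully---the only genuine content here---is that the Harder--Narasimhan and filtration manipulations of Steps~2--4 never silently use that $C\to D$ is generically of degree one (the property that reducedness guarantees and that powers Lemma~\ref{sublemma1}); I expect this to hold, since those steps operate on objects of $\CC$ and $D^b\Coh(X)$ through exact triangles alone, independent of the multiplicity of $C$ over $D$.
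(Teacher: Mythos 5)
Your proposal is correct and follows exactly the paper's own route: the paper derives this corollary from the observation that reducedness of $H$ enters the proof of Proposition~\ref{MainProp} only in Step~5 (which establishes property (c)), while Steps~1--4 give (a), (b), and (d) for any effective class. Your extra care in checking that the reductions of Steps~1--3 and the Step~4 argument never use the generic degree-one property of $C \to D$ is exactly the right point to verify, and it holds as you expect.
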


\subsection{Purity}
Let $I^{\bullet} \in D^b \Coh(X)$ be a complex
which satisfies conditions
(a-d) of Section~\ref{Section_Two_term_complexes}.

\begin{lemma} \label{Lemma_Purity} Let $a \in [-1, \infty]$.
Then the following are equivalent:
\begin{itemize}
\item $\Hom( Q[-1], I^{\bullet}) = 0$ for all $Q \in \CC_{(a, \infty]}$, and
\item
$h^1( \widetilde{ I^{\bullet}} ) \in \CC_{[-a/(1+a), \infty ]}$.
\end{itemize}
The same holds when the intervals are replaced by
$[a, \infty]$ and $( -1/(1+a), \infty]$ respectively.
\end{lemma}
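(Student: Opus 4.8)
The plan is to transport the vanishing condition through the equivalence $\Phi_H$ and then recognize the result as a slope (torsion-pair) condition on $h^1(\widetilde{I^{\bullet}})$. Set $E = h^1(\widetilde{I^{\bullet}})$ and write $c = -a/(1+a)$ for the image of $a$ under the decreasing involution $\mu \mapsto -\mu/(1+\mu)$ of Lemma~\ref{phi_H_CC}. Two inputs make the argument run: the natural isomorphism $\Hom(\CE,\CF) \cong \Hom(\widetilde{\CF},\widetilde{\CE})$ together with $\widetilde{\CE[i]} = \widetilde{\CE}[-i]$, and Corollary~\ref{Cor_MainProp}, which guarantees that $\widetilde{I^{\bullet}}$ is a two-term complex with $E \in \CC_{[-1,\infty]}$.

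First I would rewrite the left-hand side. For $Q \in \CC_{(a,\infty]}$ the isomorphism above gives $\Hom(Q[-1],I^{\bullet}) \cong \Hom(\widetilde{I^{\bullet}}, \widetilde{Q}[1])$. Since $(a,\infty] \subset [-1,\infty]$, Lemma~\ref{phi_H_CC} applies and shows $\widetilde{Q} = G[-2]$ for a sheaf $G$; moreover, because $\Phi_H$ is an equivalence carrying $\CC_{(a,\infty]}$ onto $\CC_{[-1,c)}[-2]$, the sheaf $G$ ranges over \emph{all} of $\CC_{[-1,c)}$ as $Q$ ranges over $\CC_{(a,\infty]}$. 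Thus $\widetilde{Q}[1] = G[-1]$, and the left condition becomes the vanishing of $\Hom(\widetilde{I^{\bullet}}, G[-1])$ for every $G \in \CC_{[-1,c)}$.

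Next I would reduce this to a $\Hom$ between sheaves. Applying $\Hom(-,G[-1])$ to the truncation triangle $h^0(\widetilde{I^{\bullet}}) \to \widetilde{I^{\bullet}} \to E[-1]$ of the two-term complex $\widetilde{I^{\bullet}}$, and using that $\Ext^{i}$ between coherent sheaves vanishes for $i<0$, a short computation yields $\Hom(\widetilde{I^{\bullet}}, G[-1]) \cong \Hom(E,G)$. Hence the left condition is equivalent to $\Hom(E,G) = 0$ for all $G \in \CC_{[-1,c)}$. Since Harder--Narasimhan filtrations exist in $\CC$, this is in turn equivalent to $E \in \CC_{[c,\infty]}$: the implication ``$\Leftarrow$'' holds by semistability, as every Harder--Narasimhan slope of $E$ is then $\geq c$ while every slope of $G$ is $<c$; conversely, if $E$ had a Harder--Narasimhan factor of slope $<c$, the corresponding bottom quotient would lie in $\CC_{[-1,c)}$ and receive a nonzero map from $E$. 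As $c = -a/(1+a)$, this is the first equivalence. For the variant one repeats the argument with the left endpoint at $a$ now \emph{closed}: the involution carries $\CC_{[a,\infty]}$ onto $\CC_{[-1,c]}[-2]$, so $G$ ranges over the closed interval $\CC_{[-1,c]}$ and the torsion-pair step produces the open interval $E \in \CC_{(c,\infty]}$.

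The routine steps are the $\Hom$-reduction and the standard torsion-pair equivalence; the point requiring genuine care is the interval bookkeeping under the decreasing involution $\mu \mapsto -\mu/(1+\mu)$. In particular one must verify that the quantifier ``for all $Q \in \CC_{(a,\infty]}$'' corresponds bijectively to ``for all $G \in \CC_{[-1,c)}$'' (which rests on $\Phi_H$ being an equivalence of the relevant extension-closed subcategories up to shift), and track whether each endpoint is open or closed as it is reflected through $f$. Once this is set up correctly, both equivalences follow formally from the two-term structure of $\widetilde{I^{\bullet}}$ supplied by Corollary~\ref{Cor_MainProp} and the existence of Harder--Narasimhan filtrations in $\CC$.
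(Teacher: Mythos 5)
Your proof is correct and follows essentially the same route as the paper: transport the vanishing through $\Phi_H$ via Lemma~\ref{phi_H_CC}, reduce $\Hom(\widetilde{I^{\bullet}},G[-1])$ to $\Hom(h^1(\widetilde{I^{\bullet}}),G)$ using the truncation triangle, and finish with the Harder--Narasimhan argument using $h^1(\widetilde{I^{\bullet}})\in\CC_{[-1,\infty]}$ from Corollary~\ref{Cor_MainProp}; the paper's own proof is just a terser version of exactly this. One remark: the interval $(-a/(1+a),\infty]$ you obtain in the second variant is what the argument (and the later application with $a=\infty$, which must yield the $\pi$-torsion condition $\CC_{(-1,\infty]}$) actually requires, so the $-1/(1+a)$ in the printed statement appears to be a typo that you have implicitly corrected.
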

\begin{proof} 
By Lemma~\ref{phi_H_CC} we have
\[ \Hom( Q[-1], I^{\bullet}) = 0 \]
for all $Q \in \CC_{(a, \infty]}$ if and only if
\[ \Hom( \widetilde{I^{\bullet}} , G[-1] )
= \Hom( h^1( \widetilde{I^{\bullet}} ), G ) = 0 \]
for all $G \in \CC_{[-1, -a/(1+a) )}$. Since by
Corollary \ref{Cor_MainProp}
we have
$h^1( \widetilde{I^{\bullet}} ) \in \CC_{[-1,\infty]}$,
this is equivalent to
\[ h^1( \widetilde{I^{\bullet}} ) \in \CC_{[-a/(1+a), \infty]} \,. \]

The last claim of the Lemma follows by a parallel argument.
\end{proof}

We obtain the following corollary
which implies Theorem \ref{FM0}.
\begin{cor} Let $H \in \Pic(S)$ be reduced. Then a complex
$I^{\bullet}$ is a stable pair if and only if
$\widetilde{I^{\bullet}}$ is a $\pi$-stable pair.
\end{cor}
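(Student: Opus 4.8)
The plan is to characterize both stable pairs and $\pi$-stable pairs as objects satisfying conditions (a--d) of Section~\ref{Section_Two_term_complexes} together with one support condition on $h^1$ and one $\Hom$-vanishing (purity) condition, and then to match these two extra conditions across $\Phi_H$ using Lemma~\ref{Lemma_Purity}. First I would record that every stable pair, and every $\pi$-stable pair, of class $(1,0,-(H+dF),-n)$ satisfies (a--d): two-termness and torsion-freeness of $h^0$ are immediate, while $h^1$ lies in $\CC_{\{\infty\}}$ (resp.\ in the $\pi$-torsion category $\CC_{(-1,\infty]}$), both of which sit inside $\CC_{[-1,\infty]}$. Since $H$ is reduced, Proposition~\ref{MainProp} shows that $\Phi_H$ restricts to an involution on the objects satisfying (a--d); in particular $\widetilde{I^\bullet}$ satisfies (a--d) whenever $I^\bullet$ does, and conversely (using Lemma~\ref{inv}). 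It is precisely here, through property (c), that the reducedness hypothesis is used.

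It then remains to translate the two remaining conditions. Recalling that $\pi$-torsion sheaves are exactly $\CC_{(-1,\infty]}$ and $0$-dimensional sheaves are exactly $\CC_{\{\infty\}}$, I would apply Lemma~\ref{Lemma_Purity} at three endpoints. Applying the lemma to $\widetilde{I^\bullet}$ at $a=-1$ and using $\widetilde{\widetilde{I^\bullet}}=I^\bullet$ gives
\[ \Hom(Q[-1],\widetilde{I^\bullet})=0 \ \text{for all } Q\in\CC_{(-1,\infty]} \ \Longleftrightarrow\ h^1(I^\bullet)\in\CC_{\{\infty\}}, \]
so condition (3) for $\widetilde{I^\bullet}$ is equivalent to $h^1(I^\bullet)$ being $0$-dimensional. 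Applying the second form of the lemma to $I^\bullet$ at $a=\infty$ gives
\[ \Hom(Q[-1],I^\bullet)=0 \ \text{for all } Q\in\CC_{\{\infty\}} \ \Longleftrightarrow\ h^1(\widetilde{I^\bullet})\in\CC_{(0,\infty]}, \]
and at $a=0$ gives
\[ h^1(\widetilde{I^\bullet})\in\CC_{(-1,\infty]} \ \Longleftrightarrow\ \Hom(Q[-1],I^\bullet)=0 \ \text{for all } Q\in\CC_{[0,\infty]}. \]

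With these three equivalences the two implications follow, using the inclusions $\CC_{\{\infty\}}\subset\CC_{[0,\infty]}$ and $\CC_{(0,\infty]}\subset\CC_{(-1,\infty]}$. If $I^\bullet$ is a stable pair, then $h^1(I^\bullet)$ is $0$-dimensional, so condition (3) holds for $\widetilde{I^\bullet}$, and $I^\bullet$ is pure against $\CC_{\{\infty\}}$, whence $h^1(\widetilde{I^\bullet})\in\CC_{(0,\infty]}\subset\CC_{(-1,\infty]}$ is $\pi$-torsion; together with (a--d) this makes $\widetilde{I^\bullet}$ a $\pi$-stable pair. Conversely, if $\widetilde{I^\bullet}$ is a $\pi$-stable pair, then $h^1(\widetilde{I^\bullet})$ is $\pi$-torsion, so by the $a=0$ equivalence $I^\bullet$ is pure against all of $\CC_{[0,\infty]}\supset\CC_{\{\infty\}}$, which gives condition (3) for $I^\bullet$; and condition (3) for $\widetilde{I^\bullet}$ forces $h^1(I^\bullet)$ to be $0$-dimensional. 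Hence $I^\bullet$ is a stable pair.

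I expect the main subtlety to be exactly the asymmetry in the endpoints: the two implications are proved using different values of $a$, and the point is that purity against the \emph{smaller} category $\CC_{\{\infty\}}$ already forces $h^1(\widetilde{I^\bullet})$ into $\CC_{(0,\infty]}$, which lies strictly inside the $\pi$-torsion range $\CC_{(-1,\infty]}$. Equivalently --- and this is the one genuinely geometric input, made transparent by $\Phi_H$ --- for an object satisfying (a--d) with $h^1$ of dimension zero, purity against $0$-dimensional sheaves already implies purity against all of $\CC_{[0,\infty]}$, i.e.\ that the sheaf of a stable pair admits no fibre-supported subsheaf of non-negative slope. Once the endpoint conventions $-1/\infty=0$ and $1/0=\infty$ of Lemma~\ref{Lemma_Purity} are tracked with care, no further computation is required.
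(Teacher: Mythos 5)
Your architecture is exactly the paper's: Proposition~\ref{MainProp} takes care of conditions (a--d), and Lemma~\ref{Lemma_Purity} at the endpoints $a=-1$ and $a=\infty$ translates the two remaining conditions across $\Phi_H$. The $a=-1$ equivalence you state is correct. The gap is in the other two. In the second form of Lemma~\ref{Lemma_Purity} the transformed interval has to be $(-a/(1+a),\infty]$, not $(-1/(1+a),\infty]$ as printed: the slope transformation of Lemma~\ref{phi_H_CC} is $\mu\mapsto-\mu/(1+\mu)$, so $\CC_{[a,\infty]}$ is carried to $\CC_{[-1,-a/(1+a)]}[-2]$, and having no quotients of slope in $[-1,-a/(1+a)]$ means lying in $\CC_{(-a/(1+a),\infty]}$. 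With $a=\infty$ and the convention $\infty/\infty=1$ this gives $(-1,\infty]$: that is, $\Hom(Q[-1],I^\bullet)=0$ for all $0$-dimensional $Q$ if and only if $h^1(\widetilde{I^\bullet})$ is $\pi$-torsion. This single equivalence, together with your $a=-1$ equivalence, settles both implications at once; the detour through $a=0$ is unnecessary, and this is precisely how the paper argues.

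As you state them, the $a=\infty$ and $a=0$ equivalences are false, and so is the ``geometric input'' you invoke to make them plausible, namely that the sheaf of a stable pair admits no fibre-supported subsheaf of non-negative slope. Counterexample: let $B\subset S$ be a curve through $s\in S$, let $\CF=\CO_{\iota(B)}\oplus\CO_{X_s}(k\cdot 0_s)$ with $k\geq 1$, and let $\CO_X\to\CF$ be the canonical surjection onto the first summand together with a non-zero section of the second; this is a stable pair $I^\bullet$, and since $\Hom(Q[-1],I^\bullet)\cong\Hom(Q,\CF)$ for $Q\in\CC$ (using $\Ext^1(Q,\CO_X)\cong H^2(X,Q)^\ast=0$), the summand $Q=\CO_{X_s}(k\cdot 0_s)$ of slope $k>0$ gives a non-zero element. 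Hence purity against $0$-dimensional sheaves does not imply purity against $\CC_{[0,\infty]}$, and $h^1(\widetilde{I^\bullet})$ acquires a Harder--Narasimhan factor of slope $-k/(1+k)\in(-1,0)$: it is $\pi$-torsion but not in $\CC_{(0,\infty]}$. The conclusions you ultimately extract in each direction are the true, weaker ones, but both implications as written pass through a false intermediate statement; correcting the interval to $(-a/(1+a),\infty]$ repairs the argument and collapses it to the paper's proof.
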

\begin{proof}
Let $I^\bullet \in D^b\mathrm{Coh}(X)$.
By Proposition \ref{MainProp} and by
Lemma~\ref{Lemma_Purity} with ${a=-1}$ and $a=\infty$
respectively we obtain:
\begin{enumerate}
\item[(1)] The complex $I^\bullet$ satisfies (a-d) (in Section 3.3) if and only if $\widetilde{I^\bullet}$ does.
\item[(2)] Suppose $I^\bullet$ satisfies (a-d), then $h^1(I^\bullet)$ is $0$-dimensional if and only if \[\mathrm{Hom}(Q[-1],\widetilde{I^\bullet})=0\] for all $\pi$-torsion sheaves $Q$.
\item[(3)] Suppose $I^\bullet$ satisfies (a-d), then $h^1(I^\bullet)$ is $\pi$-torsion if and only if \[\mathrm{Hom}(Q[-1],\widetilde{I^\bullet})=0\] for all $0$-dimensional sheaves $Q$.
\end{enumerate}
Hence $I^\bullet$ is a stable pair if and only $\widetilde{I^\bullet}$
is a $\pi$-stable pair. 
\end{proof}

\subsection{Non-reduced case}
We show by example that if the
class $H$ is non-reduced on $S$, then
there exist a stable pair $I^{\bullet}$ such that
$h^0(\widetilde{I^\bullet)}$ is not an ideal sheaf.
In particular the proof of Theorem~\ref{Theorem1} breaks down
and new methods are required, see \cite{OS2}.

\vspace{7pt}
\noindent \textbf{Example.}
Let $S$ be an elliptic K3 surface with a section $B \subset S$,
and let $E$ be an elliptic curve.
The Calabi--Yau 3-fold $X = S \times E$ admits
a trivial elliptic fibration over $S$
\[
\pi: X \rightarrow S
\]
by projection.
Let $F \subset S$ be a fixed fiber on the elliptic K3 surface $S$.
Assume $x_1$, $x_2$ are two distinct points on $E$,
and let $F_i=F \times \{x_i\}$, $i=1,2$.
We consider the curve
\[
C= \iota(B) \cup F_1 \cup F_2 \, \subset \, X
\]
in the curve class $\iota_\ast H$ where $H = B+2F$.
The class $H$ is a primitive, but non-reduced.

Let $\CI_C$ be the ideal sheaf of $C$ considered
as the stable pair
$[\CO_X \to \CO_C]$.
By Lemmas \ref{sublemma3} and \ref{Lemma_Chern_character},
the complex $\widetilde{\CI_C}$ is a sheaf with 
\begin{equation}\label{c_1}
c_1(\widetilde{\CI_C}) = 0 \in H^2(X,\BQ).
\end{equation}

We show that $\widetilde{\CI_C}$ is not an ideal sheaf.

Let $D= B+F$ and let $W = \pi^{-1}(D)$.
Since $C$ lies on the surface $W$
we have the exact sequence
\begin{equation*}\label{SES}
0 \to \CO_X(-\pi^\ast D) \to \CI_C \to \CI_{C/W} \to 0.
\end{equation*}
Since $\Phi_H( \CO_X(-\pi^\ast D) ) = \CO_X(-\pi^\ast F)$
we obtain
\[
0 \to h^0(\widetilde{\CI_{C/W}}) \to \widetilde{\CI_C} \xrightarrow{g} \CO_X(-\pi^\ast F) \to h^1(\widetilde{\CI_{C/W}}) \to 0.
\]
By definition, the sheaf $h^0(\widetilde{I_{C/W}})$
is supported on the surface $W$ and hence is torsion in $X$.
If $\widetilde{\CI_C}$ is an ideal sheaf, then $h^0(\widetilde{I_{C/W}})=0$
and thus $g$ is injective. This is a contradiction to (\ref{c_1})
since ideal sheaves are stable.

\section{Applications} \label{Section_Applications}
\subsection{Overview}
Here we prove Theorems \ref{Theorem_genus0},
\ref{Theorem_highergenus} and \ref{Theorem_highergenus_EulerChar},
and Proposition~\ref{Theorem_GW}.

\subsection{Proof of Theorem \ref{Theorem_highergenus}}
Let
$\mathsf{N}_{n,\beta} \in \BQ$
be the generalized Donaldson--Thomas invariant
counting semistable sheaves of class $(0,0, \beta, n)$.
In \cite{Br1, T16} the following structure result was proven:
\begin{equation} \label{TE}
\sum_{n,\beta}
\mathsf{P}_{n,\beta} q^n t^{\beta}  
=
\exp\Big(\sum_{n > 0, \beta>0} (-1)^{n-1} n \mathsf{N}_{n,\beta} q^n t^{\beta} \Big)
\cdot \sum_{n, \beta} \mathsf{L}_{n,\beta} q^n t^{\beta}
\end{equation}
where for every $\beta$
the invariants $\mathsf{L}_{n,\beta} \in \BZ$ satisfy
\begin{itemize}
\item $\mathsf{L}_{n,\beta} = \mathsf{L}_{-n,\beta}$ for all $n$,
\item $\mathsf{L}_{n,\beta} = 0$ for all $n \gg 0$.
\end{itemize}
For every $H$ we define the series
\[
f_H = \sum_{d \geq 0} \sum_{n > 0}
(-1)^{n-1} n \mathsf{N}_{n,H+dF} q^n t^d
\quad \text{and} \quad
L_H = \sum_{d \geq 0} \sum_{n \in \BZ } L_{n,H+dF} q^n t^d \,.
\]

Let $H \in \Pic(S)$ be a reduced class.
Picking out all $t^{H}$-terms in \eqref{TE} yields
\begin{equation} \label{452452}
\frac{
\PT_H(q,t)}{\PT_0(q,t)}
=
\sum_{k \geq 1} \sum_{\substack{H = H_1 + \ldots + H_k \\ H_i \text{effective}}}
\left( f_{H_1} \cdots f_{H_k} + \sum_{i=1}^{k}
\Big( \prod_{j \neq i} f_{H_j} \Big) \frac{L_{H_i}}{L_0}  \right) \,.
\end{equation}
Since every summand $H_i$ in the above sum is
non-zero and reduced
we have by \cite[2.9]{Tpar2}
\begin{equation} \label{eq502} f_{H_i} =
(q^{1/2} + q^{-1/2})^{-2}
\sum_{d \geq 0} \mathsf{N}_{1,H_i+dF} t^d
\,. \end{equation}
Moreover by \cite{T12} we have
$L_0 = \prod_{m \geq 1} (1-t^m)^{-e(S)}$,

Plugging both into \eqref{452452}
we find that for every $d$ the $t^d$ coefficient of $\PT_H/\PT_0$
can be written in the following form:
\[
\left[ \ \frac{\PT_H(q,t)}{\PT_0(q,t)} \ \right]_{t^d}
= \sum_{k= -n}^{r} a_{d,k} (q^{1/2} + q^{-1/2})^{2k}
\]
for some $r \gg 0$ (dependent on $d$) and where $n$ is
the largest integer for which there
exist a decomposition $H = \sum_{i=1}^{n} H_i$
into effective classes.

On the Jacobi form side we have for every $i$
\[ \phi_{-2,1}^{i-1} \phi_{0,1}^{h-i} =
12^{h-i} (q^{1/2} + q^{-1/2})^{2i-2}
+ \sum_{j =i}^{N} a_j (q^{1/2} + q^{-1/2})^{2j} + O(t) \]
for some $a_j \in \BQ$ and some $N > 0$.
Also every $t^d$-coefficient
is of the form
\[
\left[ \ \phi_{-2,1}^{i-1} \phi_{0,1}^{h-i} \ \right]_{t^d}
= \sum_{k= i-1}^{r'} a'_{d,k} (q^{1/2} + q^{-1/2})^{2k} \,.
\]

Therefore by an induction argument
there exist power series
\[ f_{-(n-1)}(t), \ldots, f_{h}(t) \in \BQ[[t]] \]
such that every $t^d$-coefficient of the series
\[ \CR(q,t) =
\frac{\PT_H}{\PT_0}
- \sum_{i=-(n-1)}^{h} f_i(t) \varphi_{-2,1}^{i-1} \varphi_{0,1}^{h-i}
\]
takes the form
\begin{equation} \CR_d(q,t) = \left[ \ \CR(q,t) \ \right]_{t^d}
=
\sum_{k = h}^{r} b_{d,k} (q^{1/2} + q^{-1/2})^{2k} \,.
\label{aaaedfdfsd}
\end{equation}
Equivalently, every coefficient of $\CR_d(q,t)$
vanishes to order $u^{2h}$
when expanded in the variables $q=-e^{iu}$.
We prove $\CR(q,t) = 0$.

By \eqref{aaaedfdfsd} and since
$h \geq 0$ by assumption
we have the equality of power series
\begin{equation} \CR(q^{-1}, t) = \CR(q,t) \label{eq500} \end{equation}

Using the definition of $\phi_{-2,1}$ as a product one verifies
\[ \frac{1}{\phi_{-2,1}(q^{-1} t, t)}
= q^{-2} t^{1} \frac{1}{\phi_{-2,1}(q,t)}
\]
\emph{as power series}.
By the Jacobi form property of $\phi_{0,1}$ \cite{EZ} we also have
\[ \phi_{0,1}(q^{-1} t, t)
= q^{2} t^{-1} \phi_{0,1}(q,t) \,.
\]
Hence we obtain for every $i \leq h$ the equality of power series
\[ 
\big( \phi_{-2,1}^{i-1} \phi_{0,1}^{h-i} \big)(q^{-1}t, t)
=
q^{2(h-1)} t^{-(h-1)}
\big( \phi_{-2,1}^{i-1} \phi_{0,1}^{h-i} \big)(q,t) \,.
\]
Applying Theorem~\ref{Theorem1} we thus find
the equality of power series:
\begin{equation} \label{eq501}
\CR(q^{-1}t, t)
=
q^{2(h-1)} t^{-(h-1)}
\CR(q,t) \,.
\end{equation}

Combining \eqref{eq500} and \eqref{eq501}
we conclude
\begin{equation}
\CR(qt^{\lambda}, t) = t^{- (h-1) \lambda^2} q^{-2 (h-1) \lambda} \CR(q,t)
\label{Masfsaf}\end{equation}
for all $\lambda \in \BZ$ as power series.
Let $c_{n,d}$ be the coefficient of
$q^n t^d$ in $\CR(q,t)$.
Then \eqref{Masfsaf} is equivalent to
\begin{equation} c_{n,d}
= c_{n + 2(h-1) \lambda, d + \lambda n + (h-1) \lambda^2}
\label{Masfsd2} \end{equation}
for all $n,d, \lambda \in \BZ$.

Assume $\CR(q,t)$ is non-zero and let
$d$ be the smallest integer such that
$\CR_d(q,t)$ is non-zero.
Since the sum in \eqref{aaaedfdfsd}
starts at $k=h$ we have
\[ c_{n,d} \neq 0 \]
for some $n \geq h \geq 0$.
But then by \eqref{Masfsd2} with $\lambda=-1$
we obtain
\[ c_{n,d} = c_{n-2h+2, d-n+(h-1)} \neq 0 \,. \]
Since $d-n+(h-1) < d$ this contradicts the choice of $d$. \qed

\subsection{Proof of Theorem \ref{Theorem_genus0}}
By Theorem~\ref{Theorem_highergenus} we have
\[ \frac{\PT_H(q,t)}{\PT_0(q,t)} = f_0(t) \frac{1}{\phi_{-2,1}(q,t)} \]
for some power series $f_0(t)$. Hence
we need to prove $f_0(t) = \mathsf{F}_H(t)$.

By \eqref{452452} and \eqref{eq502},
and since the genus $0$ Gopakumar--Vafa invariant
is
$\mathsf{n}_{\beta} = \mathsf{N}_{1, \beta}$
we have
\[ \frac{\PT_H(q,t)}{\PT_0(q,t)} = \mathsf{F}_H(t) \frac{q}{(1+q)^2}
+ \frac{L_H}{L_0} \,. \]
Similarly,
\[ \frac{1}{\phi_{-2,1}(q,t)}
= \frac{q}{(1+q)^2} + \phi'(q,t) \]
where $\phi'(q,t)$ is regular at $q=-1$. Comparing
both sides we obtain
\[
\pushQED{\qed} 
f_0(t) = \mathsf{F}_H(t) \,. \qedhere
\popQED
\]

\subsection{Proof of Proposition~\ref{Theorem_GW}}
For every $i$ we have under the variable change $q=-e^{iu}$
\[
\phi_{-2,1}^{i-1} \phi_{0,1}^{h-i} = 12^{h-i} u^{2i-2} + O(u^{2i}) \,. 
\]
Hence the $u^{2i}$-coefficients of $\PT_H/\PT_0$ for $i < h$
uniquely determine the functions $f_k(t)$
in Theorem~\ref{Theorem_highergenus}.

If $H$ is irreducible the coefficient of $u^H$ in
\[
\log \Big( 
\sum_{L, n,d} \mathsf{P}_{n,L+dF} q^n t^{d}u^{L} 
\Big) \]
is exactly $\PT_H/\PT_0$.
Therefore Proposition~\ref{Theorem_GW}
follows from
the GW/PT correspondence:
\[
\pushQED{\qed} 
\frac{\PT_H(q,t)}{\PT_0(q,t)}
= \sum_{g=0}^{\infty} 
\mathrm{GW}_H^{g}(t) u^{2g-2} \,.
\qedhere
\popQED
\]

\subsection{Proof of Theorem \ref{Theorem_highergenus_EulerChar}}
By the same argument as for Theorem \ref{Theorem_highergenus},
but using the Euler characteristics case \cite{T08}
of the structure result \eqref{TE}. \qed

\section{Examples} \label{Section_Examples}
\subsection{Overview}
We give examples
of our results for the
Schoen Calabi--Yau (Section~\ref{Section_Schoen_Calabi-Yau}),
the STU-model (Section~\ref{STU-Model}),
the local case (Section~\ref{Section_local_case}),
and the product case $S \times E$ (Section~\ref{Section_product_case}).
For $\mathrm{K3} \times E$
we give the proof of Theorems
\ref{Theorem_K3xE} and \ref{Theorem_Bryan}.
For $\mathrm{A} \times E$
we reprove several of the results of \cite{BOPY}
and connect a conjecture of \cite{BOPY}
to the modularity of the series $f_k(t)$.
We end in Section~\ref{Section_Examples_Naive_Euler_Char}
with an example that the Noether--Lefschetz terms
$\widetilde{f}_k(t)$ for unweighted Euler characteristics
may not be modular.

\subsection{The Schoen Calabi--Yau}
\label{Section_Schoen_Calabi-Yau}
A rational elliptic surface $R$ is the
blowup of $\p^2$ along the $9$ intersection
points of two distinct integral cubics. The associated
pencil of cubics
defines an elliptic fibration
\[ R \to \p^1 \,. \]
Let $R_1, R_2$ be two generic rational elliptic surfaces. The Schoen
Calabi--Yau is the fiber product
\[ X = R_1 \times_{\p^1} R_2 \,. \]
We consider $X$ to be elliptically fibered via the projection
\[ \pi : X \to R_1 \]
to the first factor.

Let $B$ be the class of a section of $R_1 \to \p^1$.
By a calculation of Bryan and Leung \cite[Thm 6.2]{BL}
the genus $0$ Gromov--Witten invariants of $X$
in classes~$B+dF$ are
\[
\GW^0_{B}(t)
=
\sum_{d \geq 0} \GW^{R_2}_{0, B+dF} t^d
=
\prod_{m=1}^{\infty} (1-t^m)^{-12}
\,.
\]
Since $e(X) = 0$ and $e(R_1) = 12$ we also have
\[ \PT_0(q,t) = \prod_{m \geq 1} (1-t^m)^{-12} \,. \]
Using Theorem \ref{Theorem_genus0} we conclude
\[ \PT_B(q,t) =
\frac{1}{(q + 2 + q^{-1})} \prod_{m \geq 1}
\frac{1}{(1+q t^m)^2 (1-t^m)^{20} (1+q^{-1} t^m)^{2}} \,. \]

Let $E$ be the class of an elliptic fiber of $R_1 \to \p^1$.
We have 
\[ \GW^0_{E}(t) = \GW_{0,E} = 0, \quad
\GW^{1}_{E}(t) = \GW_{1,E} = 12 \,. \]
Hence in agreement with a stable pair calculation by J.~Bryan
we obtain:
\[ \PT_E(q,t) = 12 \prod_{m \geq 1} (1-t^m)^{-12} \,. \]

\subsection{The STU model}
\label{STU-Model}
The STU model is a particular non-singular
Calabi--Yau threefold $X$ which admits an elliptic fibration
\[ \pi : X \to \p^1 \times \p^1 \]
with a section.
An explicit description of $X$ as a hypersurface
in a toric variety can be found in \cite{KMPS}.
The composition of $\pi$
with the projection of $\p^1 \times \p^1$ to the $i$-th factor
\begin{equation} \pi_i : X \to \p^1 \times \p^1 \to \p^1 \label{K3fibration}
\end{equation}
is a family of K3 surfaces with $528$ singular fibers.
We identify
\[ \Pic( \p^1 \times \p^1 ) = \BZ \oplus \BZ \,. \]

Recall the genus $0$ potential $\mathsf{F}_H = \mathsf{F}_H(t)$.
By\footnote{We assume the GW/PT correspondence for $X$ here. We expect the proof to be similar to \cite{PaPix2}.} \cite{KKRS,KMPS} we have
\[ \mathsf{F}_{(1,0)}(t) = \mathsf{F}_{(0,1)}(t)
= \frac{-2 E_4(t) E_6(t)}{\prod_{m \geq 1}(1-t^m)^{24}} \,. \]
The term $-2 E_4(t) E_6(t)$
is the Noether--Lefschetz
contribution of the fibration \eqref{K3fibration}
and counts the jumping of the Picard rank
of the fibers of $\pi_i$.
The term
\[ \prod_{m \geq 1} \frac{1}{(1-t^m)^{24}} \]
is the generating series of (primitive) genus $0$
invariants of K3 surfaces, and
counts rational curves
in the fibers of $\pi_i$.
The series $\mathsf{F}_{(1,0)}(t)$
splits naturally as a product of both contributions.
Applying Theorem~\ref{Theorem_genus0} we obtain
\[ \frac{\PT_{(1,0)}(q,t)}{\PT_0(q,t)} =
\frac{ -2 E_4(t) E_6(t) }{
\Delta(t) \phi_{-2,1}(q,t) } \,, \]
in perfect agreement with the
higher genus GW/NL correspondence \cite{GWNL}
and the Katz--Klemm--Vafa formula \cite{KKV, MPT}.

The formula for the class $H = (1,1)$ is new and more interesting,
since curves in the base may degenerate
to the union of two lines. 
By Theorem~\ref{Theorem_highergenus}
we have
\[
\frac{\PT_{(1,1)}(q,t)}{\PT_0(q,t)}
\, = \,
f_{-1}(t)
\cdot \frac{ \wp(q,t) }{\phi_{-2,1}(q,t) } + 
f_0(t) \cdot \frac{1}{\phi_{-2,1}(q,t)} \,.
\]

Studying the polar terms in \eqref{452452} yields
\[ f_{-1}(t) = \mathsf{F}_{(1,0)} \mathsf{F}_{(0,1)} =
4 E_4(t)^2 E_6(t)^2 \prod_{m \geq 1}(1-t^m)^{-48} \]
and 
\[ f_0(t)
= \mathsf{F}_{(1,1)} + \frac{1}{12} E_2(t)
\mathsf{F}_{(1,0)} \mathsf{F}_{(0,1)} \,. \]
By the calculations of \cite[6.10.5]{KKRS} we have\footnote{
Since $X$ is a hypersurface in a
toric variety, mirror symmetry for $X$
in genus $0$ is proven.
Hence there is no
difficulty in making the genus $0$ calculations
of \cite{KKRS} rigorous.},
\[
\mathsf{F}_{(1,1)}(t)
=
-E_4 E_6 \left(
\frac{67}{36} E_4^3 + \frac{65}{36} E_6^2 + \frac{1}{3} E_2 E_4 E_6
\right)
\prod_{m \geq 1}(1-t^m)^{-48} \,.
\]
Hence we find
\[
f_0(t)
=
- E_4 E_6  \left(
\frac{67}{36} E_4^3 + \frac{65}{36} E_6^2 \right)
\prod_{m \geq 1}(1-t^m)^{-48} \,.
\]
In particular, both $f_0(t)$ and $f_{-1}(t)$
are modular forms\footnote{We ignore here the non-modularity
arising from the missing $t^{-2}$ factor in the Euler product.}.
We conclude that $\PT_{(1,1)}/\PT_{0}$ is a meromorphic Jacobi form.

The elliptic surface over a non-singular line $L \subset \p^1 \times \p^1$
of class $(1,1)$ has 48 nodal fibers.
Hence by \cite{BL} we may interpret the term
\[ \prod_{m \geq 1} \frac{1}{(1-t^m)^{48}} \]
as counting rational curves in the surfaces $\pi^{-1}(L)$.
The factor
\[ 
4 E_4(t)^2 E_6(t)^2
\]
in $f_{-1}(t)$ is the product of the Noether--Lefschetz
series of the family \eqref{K3fibration},
and arises here naturally from the Noether--Lefschetz data
of the $2$-dimensional family of surfaces $\pi^{-1}(L)$
over broken lines $L = L_1 \cup L_2$.
We expect the factor
\[ -E_4 E_6  \left(
\frac{67}{36} E_4^3 + \frac{65}{36} E_6^2 \right)
=
-\frac{11}{3} + 1448q - 362376q^{2} + 85977632q^{3}
+ \ldots
\]
to arise from the Noether--Lefschetz theory
of the full family of elliptic surfaces
over curves in the base.

\subsection{The local case}
\label{Section_local_case}
Let $X$ be a non-singular
projective threefold with $\omega_X \simeq \CO_X$.
In particular, we allow $H^1(X, \CO_X)$ to be non-zero.
Assume that $X$ admits an elliptic fibration
\[ \pi : X \to S \]
with integral fibers over a non-singular surface $S$, and let
$\iota : S \to X$
be a section.
We also fix a Cohen--Macaulay curve
\begin{equation} C \subset X \label{CMcurve} \end{equation}
which satisfies the following conditions:
\begin{itemize}
\item The curve $C$ has no components supported on fibers of $\pi$.
\item The curve $C$ is reduced.
\item The restriction
\[ \pi|_{C} : C \to D := \pi(C) \]
is an isomorphism away from a zero-dimensional subset $T \subset D$.
\end{itemize}
In particular,
the image $D=\pi(C)$ is a reduced divisor in $S$.

Consider the closed subset
\[ P_{n}(X,C,d) \subset P_n(X,[C]+dF) \]
of stable pairs $\CO_X \to \CF$
such that the support of $\CF$ contains $C$.
We define $C$-local pairs invariants by
\[ \mathsf{P}_{n,C,d} = \int_{P_{n}(X,C,d)} \nu \dd{e} \,. \]
where $\nu$
is the restriction of the
Behrend function of $P_n(X,[C]+dF)$.
Let
\[ \PT_C(q,t)
= \sum_{d = 0}^{\infty} \sum_{n \in \BZ}
\mathsf{P}_{n,C,d} q^n t^d,
\]
be the generating series of $C$-local pairs invariants.

Recall also the generating series
$\PT_0(q,t)$ from \eqref{Toda_result}.

\begin{thm} \label{Theorem_LOCAL_CASE}
Let $h$ be the arithmetic genus of the
divisor $D \subset S$. Then in the situation above,
\[
\frac{ \PT_C(q^{-1} t , t) }{ \PT_0(q^{-1}t,t) }
=
q^{2(h-1)} t^{-(h-1)}
\frac{ \PT_C(q , t) }{ \PT_0(q,t)} \,.
\]
Assume $h \geq 0$,
and let $n$ be the number of rational
components of $C$.
Then there exist power series
$f_i(t) \in \BQ[[t]]$ such that
\[
\frac{\PT_C(q,t)}{\PT_0(q,t)}
\ = \  
\sum_{i=-(n-1)}^{h} f_i(t) \cdot \phi_{-2,1}(q,t)^{i-1} \phi_{0,1}(q,t)^{h-i} \,.
\]
\end{thm}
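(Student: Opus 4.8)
The plan is to prove the two assertions by localizing, to the fixed curve $C$, the two engines of the paper: the derived-equivalence--plus--wall-crossing mechanism of Sections \ref{Section_pi_stable_pairs}--\ref{Section_Derived_equivalence} for the transformation law, and the symmetry-and-vanishing argument of Theorem \ref{Theorem_highergenus} for the structure result. Throughout I would use the autoequivalence $\Phi_D := \Phi_{[D]}$ attached to the reduced divisor $D = \pi(C)$, whose arithmetic genus is $h$; note that the construction of $\Phi_D$ and the wall-crossing invoke only $\omega_X \simeq \CO_X$ and the elliptic fibration, so the hypothesis $H^1(X,\CO_X)=0$ may be dropped in this setting.

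For the transformation law I would introduce $C$-local $\pi$-stable pair invariants $\mathsf{P}^\pi_{n,C,d}$, counting $\pi$-stable pairs whose sheaf has horizontal support $C$, and prove the local form of Theorem \ref{FM0}, namely $\mathsf{P}_{n,C,d} = \mathsf{P}^\pi_{-n-2h+2,\,C,\,d+n+h-1}$. The essential observation is that the reductions in Steps 1--4 of Proposition \ref{MainProp} collapse any stable pair whose support contains $C$ onto the horizontal Cohen--Macaulay curve $C$ itself, and that the three standing hypotheses on $C$ --- reducedness, absence of fiber components, and generic isomorphism $\pi|_C \colon C \xrightarrow{\sim} D$ --- are precisely the input of Lemma \ref{sublemma1} needed in Step 5 to conclude that $h^0(\widetilde{\CI_C})$ is an ideal sheaf. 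Thus the global reducedness of $H$ is everywhere replaced by these local hypotheses on $C$, which is exactly what permits non-reduced classes $[C]$. Applying the wall-crossing of Proposition \ref{PropWallCross} --- whose correction term is assembled purely from the fiber sheaves $\CC_{(-1,\infty)}[-1]$ and is therefore unaffected by the horizontal support condition --- reproduces the factor $f(tq^{-1},t)^{-1}f(q,t)$, and combining with $f(q,t)/f(q^{-1}t,t) = \PT_0(q,t)/\PT_0(q^{-1}t,t)$ gives the transformation law as in the proof of Theorem \ref{Theorem1}.

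For the structure result, the crux is a $C$-local analogue of the Bridgeland--Toda formula \eqref{TE}--\eqref{452452}: after dividing out the fiber series $\PT_0$, each $t$-coefficient of $\PT_C/\PT_0$ is a Laurent polynomial in $(q^{1/2}+q^{-1/2})^2$, symmetric under $q \mapsto q^{-1}$, whose pole at $q=-1$ has order at most $n$, where $n$ is the number of rational components of $C$. Granting this, the proof of Theorem \ref{Theorem_highergenus} transfers verbatim: one subtracts a combination $\sum_{i=-(n-1)}^{h} f_i(t)\,\phi_{-2,1}^{\,i-1}\phi_{0,1}^{\,h-i}$ to produce a remainder $\CR$ whose coefficients vanish to order $u^{2h}$ under $q=-e^{iu}$, the transformation law together with the $q\mapsto q^{-1}$ symmetry forces the quasi-periodicity \eqref{Masfsd2}, and the minimal-$t$-degree argument yields $\CR=0$.

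The main obstacle is this local pole bound. Because $[C]$ need not be reduced, the splitting sum \eqref{452452} over effective decompositions of $H$ is unavailable, so the pole order cannot be read off from \eqref{eq502}; it must instead be extracted from the component geometry of $C$. I expect that in the local structure formula every rational component of $C$ contributes a single genus-zero factor carrying one pole $(q^{1/2}+q^{-1/2})^{-2}$, in the manner of Theorem \ref{Theorem_genus0}, while positive-genus components contribute regular factors, so that the maximal pole order is realized by the totally degenerate configuration of all $n$ rational components and matches the index $i=-(n-1)$. A secondary point to settle in the transformation law is that $\Phi_D$ does not fix $C$ but carries it to a horizontal curve $\widetilde{C}$ related to $C$ through the fiberwise inversion of Lemma \ref{Lemma_phiP_autoequivalence}(2); since this inversion is an automorphism of $X$ over $S$ preserving $D$ and all the numerical data, one has $\PT_{\widetilde{C}} = \PT_C$, and the identity closes into the asserted self-relation.
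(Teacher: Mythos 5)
Your overall strategy coincides with the paper's: both halves are proved by localizing the $\Phi$-plus-wall-crossing mechanism and the symmetry argument of Theorem~\ref{Theorem_highergenus} to pairs whose support contains $C$, and your observations that the wall-crossing term is insensitive to the horizontal support condition and that $H^1(X,\CO_X)=0$ is not needed are both correct. Two points, however, do not survive comparison with the actual proof. First, your claim that $\Phi_D$ carries $C$ to its fiberwise inverse $\widetilde{C}=\mathrm{inv}(C)$ is incorrect: the paper shows $\widetilde{\CI_C}=\CI_C$ on the nose, deduced from \eqref{11111}, the proof of Lemma~\ref{sublemma1}, and the reducedness of $C$. (Compare Lemma~\ref{Lemma_PHI_H_examples}(2): $\widetilde{\BC_x}=\imath_{s\ast}\Fm_x[-2]$ is supported at $x$, not at $\mathrm{inv}(x)$ --- the inversion hidden in $\phi_{\CP}$ is cancelled by the one hidden in $\BD$.) Your fallback $\PT_{\widetilde{C}}=\PT_C$ via the automorphism $\mathrm{inv}$ would still close the identity if your claim were true, so the error is not fatal to the transformation law, but it shows that the single genuinely new verification in that half --- that the condition ``support contains $C$'' is preserved by $\Phi_D$ --- was asserted rather than checked.

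Second, and this is the real gap: the structure result hinges on what you yourself call ``the main obstacle,'' namely a $C$-local analogue of the identity \eqref{TE} together with the bound that each $t$-coefficient of $\PT_C/\PT_0$ is a symmetric Laurent polynomial in $(q^{1/2}+q^{-1/2})^2$ with pole order at $q=-1$ at most $n$, the number of rational components of $C$. You offer only the expectation that rational components each contribute one pole and positive-genus components none. The paper does not derive this from the component geometry ab initio; it imports the $C$-local structure formula from \cite[Thm 4]{O1} and the pole statement for the local $\mathsf{N}$-contributions from \cite[Lem 2.12]{Tpar2}, after which the subtraction, quasi-periodicity, and minimal-$t$-degree argument of Theorem~\ref{Theorem_highergenus} apply verbatim. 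Without these inputs (or an actual proof of your expected pole bound) the second half of the theorem is not established.
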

\begin{proof}
We apply the same argument
as in the proof of Theorem~\ref{Theorem1}.
The wall-crossing argument
follows exactly the discussion
of Section~\ref{Section_pi_stable_pairs}
but using the moduli space of
($\pi$-)stable pairs $I^{\bullet}$
such that the curve defined by
$h^0(I^{\bullet})$ contains $C$.
Also the argument using the derived equivalence $\Phi_H$
is parallel. We only need to check that
the support of a stable pair $I^{\bullet}$
contains $C$ if and only if
so does the curve defined by $h^0( \widetilde{I^{\bullet}})$.
This reduces to showing that $\widetilde{\CI_C} = \CI_C$.
But this follows from \eqref{11111},
the proof of Lemma~\ref{sublemma1}
and since $C$ is reduced.
Hence we conclude the first claim.

The second claim follows
exactly parallel to the proof
of Theorem~\ref{Theorem_highergenus},
with \eqref{TE}
replaced by the $C$-local case
\cite[Thm 4]{O1}
and using \cite[Lem 2.12]{Tpar2}.
\end{proof}

\subsection{The product case}
\label{Section_product_case}
Let $S$ be a non-singular projective surface with $\omega_S \simeq \CO_S$.
Hence $S$ is a K3 surface or an abelian surface.
Let $E$ be an elliptic curve and consider the Calabi--Yau threefold
\[ X = S \times E \]
elliptically fibered over $S$
by the projection
to the first factor.
Let $0_E \in E$ denote the zero, and fix the section
\[ \iota : S = S \times 0_E \hookrightarrow X \,. \]

Let $H \in \Pic(S)$ be a divisor in the base,
and let 
\begin{equation} P_n(X, (H, d)) \subset P_n(X, H+dF) \label{mdfdfe}
\end{equation}
be the moduli space of stable pairs $(\CF,s)$
of class $H+dF$ such that
the pushforward $\pi_{\ast}[C]$ of the
cycle of the support $C$ of $\CF$ lies in $|H|$.

The group $(E,0_E)$ acts on the moduli space
by translation with finite stabilizers.
We define reduced stable pair invariants of $X$ by the
Behrend function weighted Euler characteristic
\[ \mathsf{P}^{\text{red}}_{n, (H,d)}
=
\int_{P_n(X, (H,d)) / E} \nu \dd{e} \,,
\]
where the Euler characteristic is taken
in the orbifold sense.
For $S$ an abelian surface
the definition first appeared in
\cite{Gul}, for $S$ a K3 surface
it can be found in \cite{Bryan-K3xE}.
Define the
generating series
of reduced invariants
\[
\PT^{\mathrm{red}}_{H}(q,t)
= \sum_{d \geq 0} \sum_{n \in \BZ}  
\mathsf{P}^{\text{red}}_{n, (H,d)} q^n t^d \,.
\]

\begin{thm} \label{THM_SxE}
Let $H \in \Pic(S)$ be irreducible of arithmetic genus $h$.
There exist power series
$f_i(t) \in \BQ[[t]]$ such that
\[
\frac{ \PT^{\mathrm{red}}_{H}(q,t) }{\PT_0(q,t)}
\ = \  
\sum_{i=0}^{h} f_i(t) \phi_{-2,1}(q,t)^{i-1} \phi_{0,1}(q,t)^{h-i} \,.
\]
\end{thm}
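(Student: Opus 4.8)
The plan is to deduce Theorem~\ref{THM_SxE} from the same two structural inputs that drive the proof of Theorem~\ref{Theorem_highergenus}: a reduced form of the elliptic transformation law, and a reduced form of the rationality structure result~\eqref{TE}. Once both are in place, the induction argument of Section~\ref{Section_Applications} applies with no essential change, using that $H$ irreducible forces the largest effective decomposition length to be $n=1$, so that the sum runs from $i=0$ to $i=h$.

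First I would observe that the construction of Sections~\ref{Section_pi_stable_pairs}--\ref{Section_Derived_equivalence} is equivariant for the translation action of $E$ on $X=S\times E$. This action commutes with $\pi$, and the auto-equivalence $\Phi_H$ is assembled from the relative Fourier--Mukai transform $\phi_{\CP}$, the dual functor $\BD$, and twists by line bundles pulled back from $S$, each natural with respect to translation. Since $H$ is irreducible it is in particular reduced, so Theorem~\ref{FM0} and Proposition~\ref{MainProp} apply, and the resulting bijection between stable pairs and $\pi$-stable pairs is $E$-equivariant; it therefore descends to the orbifold quotients defining $\mathsf{P}^{\mathrm{red}}_{n,(H,d)}$. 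As the Behrend function is intrinsic to the moduli space, integrating it over the quotient gives the reduced analogue of Theorem~\ref{FM0}. Combining this with the $E$-equivariant form of the wall-crossing of Proposition~\ref{PropWallCross} yields the reduced elliptic transformation law
\[
\frac{\PT^{\mathrm{red}}_H(q^{-1}t,t)}{\PT_0(q^{-1}t,t)}
= q^{2(h-1)} t^{-(h-1)} \frac{\PT^{\mathrm{red}}_H(q,t)}{\PT_0(q,t)} \,.
\]
Here the fiber bookkeeping simplifies because $e(X)=e(S)\,e(E)=0$, so that $\PT_0(q,t)=\prod_{m\geq 1}(1-t^m)^{-e(S)}$ is the pure base contribution, consistent with the appearance of $\PT_0$ in the denominator of the stated formula.

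Second I would invoke the reduced version of the structure result~\eqref{TE}, exactly as the local case (Theorem~\ref{Theorem_LOCAL_CASE}) invokes its $C$-local analogue. For $S\times E$ and irreducible $H$ this asserts that $\sum_n \mathsf{P}^{\mathrm{red}}_{n,(H,d)}\, q^n$ is the Laurent expansion of a rational function of $q$ invariant under $q\mapsto q^{-1}$, so that every $t^d$-coefficient of $\PT^{\mathrm{red}}_H/\PT_0$ is a Laurent polynomial in $(q^{1/2}+q^{-1/2})^2$; the relevant reduced rationality and symmetry are available from the reduced theory of $S\times E$ in \cite{Gul, Bryan-K3xE} together with the $E$-equivariant wall-crossing above. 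With these in hand, the remainder
\[
\CR(q,t) = \frac{\PT^{\mathrm{red}}_H}{\PT_0} - \sum_{i=0}^{h} f_i(t)\,\phi_{-2,1}(q,t)^{i-1}\phi_{0,1}(q,t)^{h-i}
\]
is forced to vanish by the descending induction in the proof of Theorem~\ref{Theorem_highergenus}: the two functional equations combine into $\CR(qt^\lambda,t) = t^{-(h-1)\lambda^2} q^{-2(h-1)\lambda}\CR(q,t)$, which is incompatible with a lowest nonvanishing $t$-coefficient whose $q$-expansion begins at order $(q^{1/2}+q^{-1/2})^{2h}$.

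The main obstacle I anticipate is the careful verification of $E$-equivariance and its descent to the orbifold quotient. Concretely, the Poincar\'e sheaf on $X\times_S X = S\times E\times E$ transforms under translation in one $E$-factor by tensoring with a degree-$0$ line bundle, so one must check that the induced action of $E$ on the transformed $\pi$-stable pairs matches the translation action on the target moduli space, rather than differing by an anomalous shift. Granting this compatibility, the orbifold Behrend-weighted Euler characteristics agree on both sides, and the remainder of the argument is a faithful transcription of Sections~\ref{Section_pi_stable_pairs}--\ref{Section_Applications}.
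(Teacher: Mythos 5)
Your route is genuinely different from the paper's, and it is the paper's route that avoids the difficulties you run into. The paper makes no attempt at a global $E$-equivariant descent: it first proves a $C$-local version of both the transformation law and the structure result for a fixed reduced Cohen--Macaulay curve $C$ with no fiber components (Theorem~\ref{Theorem_LOCAL_CASE}), where the wall-crossing and the equivalence $\Phi_H$ are applied to the moduli of pairs whose support contains the fixed $C$, so that no equivariance statement is ever needed; Theorem~\ref{THM_SxE} is then obtained by integrating the resulting constructible family of identities over the quotient of the Chow variety by translation, using that the expansion in the fixed basis $\phi_{-2,1}^{i-1}\phi_{0,1}^{h-i}$ is linear in the coefficients $f_i^C(t)$.

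The gap in your route is the one you flag but do not close, and it is not a routine verification. The equivalence $\Phi_H$ is \emph{not} equivariant for translation by $E$: the relative Fourier--Mukai transform $\phi_{\CP}$ intertwines translation by $e \in E$ with tensoring by the corresponding degree-zero line bundle pulled back from the $E$-factor, so the bijection of Theorem~\ref{FM0} conjugates the translation action on $P_n(X,(H,d))$ into a \emph{different} $E$-action on the $\pi$-stable pair moduli space. To ``descend to the orbifold quotients'' you would have to show that the Behrend-weighted orbifold Euler characteristics of the two quotients agree, and that the wall-crossing of Proposition~\ref{PropWallCross} is equivariant for the twisted action; ``granting this compatibility'' is precisely the missing content, not an afterthought. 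A second gap: the reduced analogue of the structure result \eqref{TE} --- rationality and $q \leftrightarrow q^{-1}$ symmetry of each $t^d$-coefficient of $\PT^{\mathrm{red}}_H$ together with the identification of the polar part needed to bound the expansion below at $i=0$ --- is asserted rather than established; the paper obtains it only through the $C$-local statement \cite[Thm 4]{O1} and \cite[Lem 2.12]{Tpar2}, not from any off-the-shelf global reduced version. Both issues are exactly what the detour through Theorem~\ref{Theorem_LOCAL_CASE} is designed to sidestep, so if you want to keep your approach you must supply these two inputs explicitly.
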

\begin{proof} This follows directly
from Theorem~\ref{Theorem_LOCAL_CASE}
by integration over the quotient of
the Chow variety of curves by the translation action,
compare \cite[4.11]{O1}.
\end{proof}

\subsection{The case $\mathrm{K3} \times E$}
Let $S$ be a projective K3 surface and let
\[ H \in \Pic(S) \]
be a primitive class of arithmetic genus $h$.
By deformation invariance \cite{O1}, 
the series $\PT^{\mathrm{red}}_{H}$
only depends on $h$. Hence we can assume $H$ is irreducible.
We write
\[ \PT^{\mathrm{K3} \times E}_{h}(q,t)
= \PT^{\mathrm{red}}_{H}(q,t) \,. \]

\begin{proof}[Proof of Theorem~\ref{Theorem_K3xE}]
Since $e(S) = 24$ and $e(X) = 0$ we have by \eqref{Toda_result},
\[ \PT_0(q,t) = \frac{1}{\Delta(t)} \,. \]
Hence by Theorem~\ref{THM_SxE}
it suffices to show that $f_i(t)$
are quasi-modular forms
of weight $2i$.
By \cite[Thm.1]{O1} the
invariant $\mathsf{P}^{\text{red}}_{n, H+dF}$
is equal to the stable pair invariant of $S \times E$
defined via the reduced virtual class and insertions.
Hence by \cite[Prop. 5]{K3xE} we have the GW/PT correspondence
\begin{equation} \label{434134}
\frac{\PT^{\mathrm{K3} \times E}_{h}(q,t)}{ \PT_0(q,t) }
=
\sum_{d \geq 0}
\sum_{g \geq 0} \mathsf{\GW}_{g,H+dF} u^{2g-2} t^d
\end{equation}
under the variable change $q = -e^{iu}$;
here $\mathsf{\GW}_{g,H+dF}$
is the reduced (connected) genus~$g$
Gromov--Witten invariant
of $S \times E$
in class $H+dF$
defined via the insertion $\tau_0(H^{\vee} \boxtimes \mathrm{pt})$, see \cite{K3xE}.
Define
\[ \mathrm{GW}_H^{g}(t) = \sum_{d \geq 0}
\mathsf{\GW}_{g,H+dF} t^d \,. \]
By \eqref{434134} the series $f_i(t)$ are determined by the identity
\begin{equation*}
\sum_{i=0}^{h} f_i(t) \phi_{-2,1}(q,t)^{i-1} \phi_{0,1}(q,t)^{h-i}
\equiv
\sum_{g=0}^{h} 
\mathrm{GW}_H^{g}(t) u^{2g-2} \ \ \mathrm{mod}\ u^{2h}
\end{equation*}
under the variable change $q=-e^{iu}$.
Hence it is enough to show that
\[ \mathrm{GW}_H^{g}(t) \in \BQ[[t]] \]
are quasi-modular forms of weight $2g$.

By a degeneration argument \cite[5.3]{MPT}
the series $\mathrm{GW}_H^{g}(t)$ can be expressed
in terms of the Gromov--Witten invariants of $R \times E$,
where $R$ is an rational elliptic surface.
Since $R$ is deformation equivalent to a toric surface, the Gromov--Witten classes of $R$ are tautological on $\overline{M}_{g,1}$ \cite{FPrel}.
Hence by Behrend's product formula \cite{B2} the generating series $\mathrm{GW}_H^g$ can be expressed in terms of the following generating series of elliptic Gromov--Witten invariants:
\begin{equation}\label{taut}
\sum_{d\geq 0} \left(  \int_{[ \Mbar_{g,1}(E, d) ]^{\text{vir}}} \mathrm{ev}^\ast(\mathrm{pt})\cup h^\ast \alpha  \right) q^d,
\end{equation}
where $\mathrm{ev}: \Mbar_{g,1}(E, d) \rightarrow E$ is the evaluation map, $h: \Mbar_{g,1}(E, d) \rightarrow \Mbar_{g,1}$ is the forgetful map, and $\alpha$ is a tautological class on $\Mbar_{g,1}$.

The induction process in the proof of \cite[Prop 29]{MPT} for K3 surfaces can be identically applied to the elliptic curve $E$, which allows us to write the tautological integrals (\ref{taut}) in terms of descendent integrals. By \cite[Sec.5]{OP1}
we find (\ref{taut}) is a quasi-modular form of weight $2g$,
see also \cite[Prop 28]{MPT}.
\end{proof}

\begin{proof}[Proof of Theorem~\ref{Theorem_Bryan}]
Let $H_0 \in \Pic(S)$ be irreducible of genus $0$. Then
\[ \GW^0_{H_0}(t)
= \mathsf{\GW}_{0, H_0} = 1 \] 
so by Theorem~\ref{Theorem_genus0} and
$\PT_0 = \Delta^{-1}$ we obtain
\[
\PT_{H_0}(q,t) = 1 \cdot \frac{1}{\Delta(t)} \cdot
\frac{1}{\phi_{-2,1}(q,t)} \,.
\]

Let $H_1 \in \Pic(S)$ be an irreducible class of genus $1$.
Then
\begin{align*}
\GW^{0}_{H_1}(t)
& = \mathsf{\GW}_{0, H_1} = 24 \\
\GW_{1,{H_1}+dF} & = 24 \sigma(d) \cdot
\langle \lambda_1 \rangle^S_{1, H_1}
 = \sigma(d) \cdot (H_1^2) \cdot \langle 1 \rangle^S_{0, H_1}
 = 0
\end{align*}
where we used the Gromov--Witten bracket notation,
the product formula,
and $H_1 \cdot H_1 = 0$ in the second line.
Hence by Theorem~\ref{THM_SxE} and
\eqref{gen1_eqn}
we find

\[
\PT_{H_1}(q,t)
=
24 \cdot \frac{1}{\Delta(t)} \cdot \wp(q,t) \,.
\qedhere \]
\end{proof}

\subsection{Abelian threefolds}
Let $A$ be an abelian surface and let
\[ H \in \Pic(A) \]
be an irreducible class of arithmetic genus $h \geq 2$.
By the same argument as in \cite{O1}
the series $\PT^{\mathrm{red}}_{H}$
is invariant under deformations
which preserve the product structure $A \times E$
and keep $H$ of Hodge type $(1,1)$ on $A$.
Hence the series only depends on the arithmetic genus $h$
and we write
\[ \PT^{\mathrm{A} \times E}_{h}(q,t)
= \PT^{\mathrm{red}}_{H}(q,t) \,. \]

In \cite{BOPY} the Euler characteristic version
of the series $\PT^{\mathrm{A} \times E}_{h}$
was computed in cases $h \in \{2,3\}$ via cut-and-paste methods,
and a full formula was conjectured for any $h$.
Using calculations on the Gromov--Witten side
the case $h=2$ was proven in \cite{K3xP1}
also for the weighted case.
Here we reprove the result for $h=2$,
and for $h=3$ when assuming the GW/PT correspondence.

\begin{prop} \label{AxE_Prop1}
$\PT^{\mathrm{A} \times E}_{2}(q,t) = \phi_{-2,1}(q,t)$
\end{prop}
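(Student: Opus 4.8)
The plan is to feed the structural result of Theorem~\ref{THM_SxE} into the reduced GW/PT correspondence and then read off the low genus Gromov--Witten invariants by an elementary count of curves on a \emph{generic} abelian surface. First I would record that for the abelian threefold $X = A \times E$ one has $e(X)=0$ and $e(A)=0$, so the normalizing series of \eqref{Toda_result} is trivial, $\PT_0(q,t)=1$. Since $H$ is primitive of arithmetic genus $h=2$ it is irreducible and the longest effective decomposition has length $n=1$, so Theorem~\ref{THM_SxE} gives
\[
\PT^{\mathrm{A}\times E}_2(q,t)
= f_0(t)\,\phi_{-2,1}^{-1}\phi_{0,1}^{2}
+ f_1(t)\,\phi_{0,1}
+ f_2(t)\,\phi_{-2,1}
\]
for power series $f_0,f_1,f_2\in\BQ[[t]]$. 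Under $q=-e^{iu}$ the three basis functions have leading behaviour $144\,u^{-2}$, $12$, and $u^{2}$ respectively (and $[u^2]\phi_{-2,1}=1$), so they are linearly independent over $\BQ[[t]]$ and it suffices to prove $f_0=f_1=0$ and $f_2=1$.

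Next I would pass to Gromov--Witten theory exactly as in the proof of Theorem~\ref{Theorem_K3xE}. By deformation invariance \cite{O1} the series $\PT^{\mathrm{A}\times E}_2$ is unchanged under deformations preserving the product structure and the $(1,1)$-type of $H$, so I may assume $A$ is generic with $\mathrm{NS}(A)=\BZ\cdot H$ and $H^2=2$. Invoking the reduced GW/PT correspondence for $A\times E$ (as in \cite{K3xE,K3xP1,BOPY}), Proposition~\ref{Theorem_GW} expresses the $f_i$ in terms of $\GW_H^g(t)$ for $g=0,1,2$ through a triangular system. Inverting it using the leading orders above yields $144\,f_0=\GW_H^0$, then $12\,f_1=\GW_H^1$ once $f_0=0$, then $f_2=\GW_H^2$ once $f_0=f_1=0$. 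Thus the whole statement reduces to the three identities $\GW_H^0=\GW_H^1=0$ and $\GW_H^2(t)=1$.

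These I would verify geometrically on the generic $A$. Any stable map to $A\times E$ of class $H+dF$ projects to a stable map to $A$ of class $H$, and a non-constant map from a curve to $A$ factors through its Albanese, hence has image of geometric genus $\ge 1$. Since $\mathrm{NS}(A)=\BZ H$ with $H^2=2>0$, the surface $A$ contains no rational and no elliptic curves, and every member of $|H|$ is a smooth genus~$2$ curve. Consequently no connected genus~$0$ or genus~$1$ domain can carry the class $H$, so $\GW_{0,H+dF}=\GW_{1,H+dF}=0$ and hence $\GW_H^0=\GW_H^1=0$. For genus~$2$, a stable map of class $H+dF$ must map birationally onto a translate of the unique theta curve $C_A$ (the genus~$2$ curve with $\mathrm{Jac}(C_A)=A$) and then map $C_A$ to $E$ with degree $d$; as generic $A$ has no elliptic factor there is no non-constant map $C_A\to E$, forcing $d=0$. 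The $d=0$ contribution is the reduced genus~$2$ invariant of the theta curve, which is the basic enumerative count normalized to $1$, giving $\GW_H^2(t)=1$. Assembling these gives $f_0=f_1=0$ and $f_2=1$, hence $\PT^{\mathrm{A}\times E}_2=\phi_{-2,1}$.

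The hard part will be the genus~$2$ input. Because $e(A)=0$, the naive count over the translation family of theta curves vanishes, so one must work with the \emph{reduced} virtual class (equivalently the translation quotient of \cite{Gul,BOPY}), where the reduction absorbs the $2$-dimensional translation and produces the nonzero value $1$ with the chosen insertion $\tau_0(H^\vee\boxtimes\mathrm{pt})$. The two subtle points to pin down are precisely this normalization of the reduced genus~$2$ invariant and the vanishing of its $t$-dependence; both follow once the reduced GW/PT correspondence for $A\times E$ is in place and the enumerative analysis above is used to confine all genus~$2$ maps to degree $d=0$. The genus~$0$ and genus~$1$ vanishings, by contrast, are immediate from the absence of low genus curves on a generic $A$.
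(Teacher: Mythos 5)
Your proposal is correct and follows essentially the same route as the paper: apply Theorem~\ref{THM_SxE} (equivalently the explicit genus--$2$ inversion \eqref{gen2_eqn}), reduce to low-genus Gromov--Witten invariants via the reduced GW/PT correspondence, and compute $\GW_{H}^{0}=\GW_{H}^{1}=0$ and $\GW_{H}^{2}=1$ geometrically on a generic abelian surface. The only step you leave as a citation rather than an argument is why the GW/PT correspondence actually holds for $h=2$; the paper settles this by a degeneration of the elliptic factor (see the footnote in its proof), which is precisely what makes this proposition unconditional, in contrast to the $h=3$ case where the correspondence must be assumed.
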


\begin{proof}[Proof of Proposition \ref{AxE_Prop1}]
By Theorem~\ref{THM_SxE} we have
\[ \PT^{\mathrm{A} \times E}_{2}(q,t) =
f_0(t) \frac{\phi_{0,1}^2}{\phi_{-2,1}}
+ f_1(t) \phi_{0,1} + f_2(t) \phi_{-2,1}\,. \]
By a degeneration argument\footnote{
The reduced virtual class splits naturally
under degenerating the elliptic factor to a nodal rational curve
and resolving,
compare \cite[2.5]{K3xP1} and \cite[Section 4]{MPT}.
} the GW/PT
correspondence holds for ${h=2}$. 
Hence we may compute $f_i(t)$ by Gromov--Witten theory.
Let $H_2 \in \mathrm{Pic}(A)$ be irreducible of genus 2, and let
\[ \GW_{H_2}^g(t) = \sum_{d \geq 0} \GW^{A \times E}_{g,H_2+dF}t^d \]
denote the generating series of reduced genus $g$ Gromov--Witten
invariants of $A\times E$ following \cite[7.2]{BOPY}.
Since a generic abelian surface does not admit nontrivial maps
from rational or elliptic curves we have
$\GW_{H_2}^{0}(t) = \GW_{H_2}^{1}(t) = 0$.
The genus $2$ invariants are
\[ \GW_{H_2}^2(t) = 1 \]
since there is exactly one genus $2$ curve in the linear system $|H_2|$ on $A$.
Applying \eqref{gen2_eqn} we conclude the result.
\end{proof}

Consider the theta function of the D4 lattice
\[ \vartheta_{\mathsf{D4}}(t)
=
1 + 24 \sum_{d \geq 1} \sum_{\substack{k|d \\ k \textup{ odd}}} k t^d
\,. \]
\begin{prop} \label{AxE_Prop2}
Let $H \in \Pic(A)$ be irreducible of arithmetic genus $3$,
and assume the GW/PT correspondence in the sense of \cite[Conj. B]{BOPY}
holds for the classes $H+dF$. Then
\[
\PT^{\mathrm{A} \times E}_{3}(q,t) =
12 \cdot \wp(q,t) \phi_{-2,1}(q,t)^2
- \vartheta_{\mathsf{D4}}(t) \cdot \phi_{-2,1}(q,t)^2 \,.
\]
\end{prop}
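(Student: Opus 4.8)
The plan is to run the argument of Proposition~\ref{AxE_Prop1} one genus higher. Since $e(A\times E)=e(S)=0$, formula~\eqref{Toda_result} gives $\PT_0(q,t)=1$, so Theorem~\ref{THM_SxE} (which applies since $H$ is irreducible) produces power series $f_0,f_1,f_2,f_3\in\BQ[[t]]$ with
\[
\PT^{\mathrm{A} \times E}_{3}(q,t)
= f_0(t)\frac{\phi_{0,1}^{3}}{\phi_{-2,1}}
+ f_1(t)\,\phi_{0,1}^{2}
+ f_2(t)\,\phi_{-2,1}\phi_{0,1}
+ f_3(t)\,\phi_{-2,1}^{2},
\]
and it suffices to determine these four series. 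Because the GW/PT correspondence is assumed in class $H+dF$, they are governed by the genus $h=3$ instance of~\eqref{eqGW}: under $q=-e^{iu}$ the $f_i$ are the triangular solution of
\[
\sum_{i=0}^{3} f_i(t)\,\phi_{-2,1}^{\,i-1}\phi_{0,1}^{\,3-i}
\equiv
\sum_{g=0}^{3}\mathrm{GW}_H^{g}(t)\,u^{2g-2}\pmod{u^{6}}.
\]
Thus the entire problem reduces to evaluating the four reduced Gromov--Witten series $\mathrm{GW}_H^{g}(t)$ of $A\times E$ for $g=0,1,2,3$.

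First I would dispose of the low genus terms exactly as in Proposition~\ref{AxE_Prop1}: a generic abelian surface admits no nonconstant maps from rational or elliptic curves, since the image class in $A$ would be $0\neq H$. Hence $\mathrm{GW}_H^{0}(t)=\mathrm{GW}_H^{1}(t)=0$, which forces $f_0=f_1=0$ by matching the $u^{-2}$ and $u^{0}$ coefficients. It then remains to pin down $\mathrm{GW}_H^{2}$ and $\mathrm{GW}_H^{3}$. Expanding the two surviving factors to the needed order in $u$ --- using $\phi_{-2,1}=u^{2}-\tfrac{1}{12}E_2(t)\,u^{4}+O(u^{6})$, the identity $\phi_{0,1}\phi_{-2,1}=12\,\wp\,\phi_{-2,1}^{2}$, and the vanishing of the $u^{0}$-coefficient of $\wp$ --- gives the explicit inversion
\[
\mathrm{GW}_H^{2}(t)=12\,f_2(t),
\qquad
\mathrm{GW}_H^{3}(t)=f_3(t)-2E_2(t)\,f_2(t).
\]
Consequently the target identities $f_2=1$ and $f_3=-\vartheta_{\mathsf{D4}}$ are equivalent to the two Gromov--Witten evaluations $\mathrm{GW}_H^{2}(t)=12$ and $\mathrm{GW}_H^{3}(t)=-\vartheta_{\mathsf{D4}}(t)-2E_2(t)$.

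To obtain these two series I would degenerate the elliptic factor $E$ to a nodal rational curve and normalize, as in \cite[Section~4]{MPT} and \cite[2.5]{K3xP1}, and then apply Behrend's product formula \cite{B2} to split each reduced genus $g$ invariant of $A\times E$ into the reduced Gromov--Witten theory of the abelian surface $A$ in class $H$ and the (relative) Gromov--Witten theory of $E$. The elliptic factor supplies the quasimodular content --- the $\sigma_1$-sums assembling into $E_2$ and the odd divisor sums assembling into $\vartheta_{\mathsf{D4}}$ --- while the abelian factor supplies the finite curve counts: the top genus term reduces to counting the genus $3$ members of the pencil $|H|$ with their translation weights, and the genus $2$ term to its $1$-nodal members. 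Carried out, this yields $\mathrm{GW}_H^{2}=12$ and $\mathrm{GW}_H^{3}=-\vartheta_{\mathsf{D4}}-2E_2$. Substituting $f_0=f_1=0$, $f_2=1$, $f_3=-\vartheta_{\mathsf{D4}}$ into the structural formula and using $\phi_{-2,1}\phi_{0,1}=12\,\wp\,\phi_{-2,1}^{2}$ produces the claimed equality.

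The formal reduction, the low genus vanishing, and the inversion of~\eqref{eqGW} are all routine. The hard part will be the explicit evaluation of $\mathrm{GW}_H^{2}$ and $\mathrm{GW}_H^{3}$: one must fix the precise normalizations in the product formula so that the abelian curve counts and the elliptic quasimodular factors combine to give exactly $12$ and $-\vartheta_{\mathsf{D4}}-2E_2$, and in particular so that the $\mathsf{D4}$ theta series appears with the correct odd-divisor coefficients. This is the weighted Gromov--Witten counterpart of the cut-and-paste computation performed in \cite{BOPY}, and matching the two through the assumed correspondence is the crux of the proof.
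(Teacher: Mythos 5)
Your overall strategy coincides with the paper's: use the assumed GW/PT correspondence together with Proposition~\ref{Theorem_GW} to reduce everything to the reduced Gromov--Witten series $\GW_H^g(t)$ for $g \le 3$, kill $g=0,1$ by the absence of rational and elliptic curves on a generic abelian surface, and then determine $\GW_H^2$ and $\GW_H^3$. Your explicit inversion $\GW_H^2 = 12 f_2$, $\GW_H^3 = f_3 - 2E_2 f_2$ is correct and usefully makes the final step ("the result follows by Proposition~\ref{Theorem_GW}") concrete. Note that it forces $\GW_H^3(t) = -\vartheta_{\mathsf{D4}}(t) - 2E_2(t)$, whose constant term is $-3$, whereas the series displayed in the paper's proof starts at $d\ge 1$; your version is the one consistent with the claimed answer, so this is worth flagging rather than a defect of your argument.

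The genuine gap is the evaluation of $\GW_H^2(t)$ and especially $\GW_H^3(t)$, which you explicitly defer as ``the hard part.'' The paper does not obtain these by degenerating $E$ and applying Behrend's product formula; it imports them from the lattice counting and the multiple cover formula of \cite{BOPY}. Your proposed route is not only unfinished but doubtful as stated: the product formula splits the \emph{standard} Gromov--Witten classes of a product, whereas here the $A$-factor carries the reduced obstruction theory of an abelian surface, and the $t$-dependence (the $\sigma(2d)$ and $\sigma(d/2)$ terms assembling into $\vartheta_{\mathsf{D4}}$ and $E_2$) is produced by genus $2$ and $3$ curves in $A\times E$ whose images in $A$ move in class $H$ while covering $E$ with degree $d$ --- precisely the content of the lattice count and multiple cover rule in \cite{BOPY}, not of the Gromov--Witten theory of $E$ paired with fixed curves in the abelian surface. (Compare the $\mathrm{K3}\times E$ computation in the paper, where the degeneration is of the $\mathrm{K3}$ to a rational elliptic surface with tautological GW classes; no analogous simplification is available for $A$.) To close the proof you should either cite the relevant computations of \cite{BOPY} directly, as the paper does, or actually carry out a count yielding $\GW_H^2 = 12$ and $\GW_H^3 = -\vartheta_{\mathsf{D4}} - 2E_2$.
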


\begin{proof}[Proof of Proposition \ref{AxE_Prop2}]
By assumption we again only need to determine the
Gromov--Witten invariants $\GW_{H}^{g}(t)$ for genus up to $3$.
The invariants vanish in $g=0,1$.
By counting genus $2$ curves in $A$ we obtain
$\GW_{H}^{2}(t) = 12$.
For genus $3$, by using lattice counting
and the multiple cover formula \cite{BOPY} we obtain
\[ \GW_{H}^{3}(t) = \sum_{d\geq 1} \big( 8 \sigma(2d)
+ 64 \sigma(d/2) \delta_{d,\text{even}} \big) t^d \]
where $\sigma(d) = \sum_{k|d} k$.
The result follows as before by Proposition~\ref{Theorem_GW}.
\end{proof}

For general $h$ applying Theorem~\ref{THM_SxE}
yields the expansion
\[
\PT^{\mathrm{A} \times E}_{h}(q,t) = \sum_{i=2}^{h}
f_i(t) \phi_{-2,1}(q,t)^{i-1} \phi_{0,1}(q,t)^{h-i} 
\]
for power series $f_i(t)$.
By (and in agreement with) Conjecture C of \cite{BOPY}
we have the following.\\

\noindent \textbf{Conjecture \cite{BOPY}.}
Every series $f_{i}(t),\,  i=2, \ldots, h$ is
a modular form of weight $2i-4$ for
the congruence subgroup $\Gamma_0(h-1)$.
\vspace{5pt}

\subsection{Na\"ive Euler characteristic}
\label{Section_Examples_Naive_Euler_Char}
Consider as above $X = A \times E$
for an abelian surface with an irreducible
class $H \in \Pic(A)$ of arithmetic genus $3$.
Let
\[ \widetilde{\PT}_{H}(p,t)
= \sum_{d \geq 0} \sum_{n \in \BZ}   
e\big( P_n(X, (H, d)) / E \big) q^n t^d
\]
be the generating series of Euler characteristics
of (the quotient of) the stable pair
moduli space defined in \eqref{mdfdfe}.
By \cite[Thm. 5]{BOPY} we have
\[
\widetilde{\PT}_{H}(p,t)
=
12 \wp(-p,t) \phi_{-2,1}(-p,t)^2
+ \widetilde{f}_3(t) \phi_{-2,1}(-p,t)^2 \]
where
\[
\widetilde{f}_3(t) = 1 + 12 \sum_{d \geq 1} \sum_{k|d} ( 2k t^d + k t^{2d} )
= \frac{5}{2} - E_2(t) - \frac{1}{2} E_2(t^2) \,.
\]
In particular, $\widetilde{f}_3(t)$ is not modular.

\end{document}